\theoremstyle{plain}
\newtheorem{theorem}{Theorem}
\newtheorem{lemma}[theorem]{Lemma}
\newtheorem{cor}[theorem]{Corollary}
\newtheorem{prop}[theorem]{Proposition}
\newtheorem{conj}[theorem]{Conjecture}
\newtheorem{defi}[theorem]{Definition}
\begin{document}

\title{Some Families of Graphs with Small Power Domination Number}

\author[1]{Najibeh Shahbaznejad\thanks{najibeh.shahbaznejad@uma.ac.ir}}
\author[1]{Adel P. Kazemi\thanks{a.kazemi@uma.ac.ir}}
\author[2]{Ignacio M. Pelayo\thanks{ignacio.m.pelayo@upc.edu}}

\affil[1]{Department of Mathematics, University of Mohaghegh Ardabili, Iran}
\affil[2]{Departament de Matem\`atiques, Universitat Polit\`ecnica de Catalunya, Spain}


\maketitle


\begin{abstract}

Let $ G $ be a graph with the vertex set $ V(G) $ and $ S $ be a subset of $ V(G) $.
Let $cl(S)$ be the set of vertices built from  $S$,  by iteratively applying  the following propagation rule:
if a vertex and all of its neighbors except one of them are in $ cl(S) $, then the exceptional neighbor is also in $ cl(S) $.
A set  $S$ is called a zero forcing set of $G$ if $cl(S)=V(G)$.
The zero forcing number $Z(G)$ of $G$ is the minimum cardinality of a zero forcing set. 
Let $cl(N[S])$ be the set of vertices built from the closed neighborhood $N[S]$ of  $S$,  by iteratively applying  the previous propagation  rule.
A set  $S$ is called a power dominating set of $G$ if $cl(N[S])=V(G)$.
The power domination number $\gamma_p (G)$ of $G$ is the minimum cardinality of a power dominating set. 
In this paper, we present some families of graphs that their power domination number is 1 or 2.

\vspace{+.1cm}\noindent \textbf{Keywords:} domination, power domination, maximum degree, mycieleskian graphs, central graphs, middle graphs.

\vspace{+.1cm}\noindent \textbf{AMS subject classification:} 05C38, 05C76, 05C90.
\end{abstract}
\vspace{0.5cm}


\section{Introduction}\label{sec1:intro}


This paper is devoted to the study of both the power domination number of connected graphs introduced in \cite{hahehehe02}. 

The notion of power domination in graphs is a dynamic version of domination where a set of vertices  (power) dominates larger and larger portions of a graph and eventually dominates the whole graph.
The introduction of this parameter  was mainly  inspired by a problem in the electric power system industry \cite{bamiboad93}. 
Electric power networks must be continuously monitored. 
One usual and efficient way of accomplish this monitoring, consist in placing phase measurement units (PMUs), called PMUs,  at selected network locations. 

Due to the high cost of the PMUs, their number must be minimized,  while maintaining the ability to monitor (i.e. to observe)  the entire network. 
The \emph{power domination problem} consists thus  of finding the minimum number of PMUs needed to monitor a given electric power system.
In other words,  a power dominating set of a graph is a set of vertices that observes every vertex in the graph, following the set of rules for power system monitoring described in \cite{hahehehe02}.

Since it was formally introduced  in \cite{hahehehe02}, the power domination number has generated considerable interes; see, for example, \cite{bfffhv18,domoklsp08,dovavi16,fehokeyo17,gunira08,zhkach06}.

The defnition of the power domination number leads naturally to the introduction and  study of the zero forcing number.
As a matter of fact, the zero forcing number of a connected graph $G$ was
introduced in \cite{AIM08} as a tight  upper bound for the maximum nullity of the set of all  real symmetric matrices whose pattern of off-diagonal entries coincides with off-diagonal entries of the adjacency matrix of $G$,
and independently by mathematical physicists studying control of quantum systems \cite{bugi07}.
Since then, this parameter has been extensively investigated; see, for example,  \cite{dakast18,erkayi17,geperaso16,gera18,huchye10,kakasu19}.

In this paper, we present a variety of graph families such that all theirs members have power dominating sets of cardinality at most 2.

\subsection{Basic terminology}

{\small All the graphs considered are undirected, simple, finite and (unless otherwise stated) connected. Let $G = (V (G),E(G))$ be a graph which $V (G)$ and $E(G)$ are the vertex set and the edge set of $G$, respectively.
Let $v$ be a vertex of a graph $G$.
The \emph{open neighborhood} of $v$ is $\displaystyle N_G(v)=\{w \in V(G) :vw \in E\}$, and the \emph{closed neighborhood} of $v$ is $N_G[v]=N_G(v)\cup \{v\}$ (we will write $N(v)$ and $N[v]$ if the graph $G$ is clear from the context).
The \emph{degree} of $v$ is $\deg(v)=|N(v)|$.
The minimum degree  (resp. maximum degree) of $G$ is $\delta(G)=\min\{\deg(u):u \in V(G)\}$ (resp. $\Delta(G)=\max\{\deg(u):u \in V(G)\}$).
If $\deg(v)=1$, then $v$ is said to be a  \emph{leaf} of $G$.

The distance between vertices $v,w\in V(G)$ is denoted by $d_G(v,w)$, or $d(v,w)$ if the graph $G$ is clear from the context.
The diameter of $G$ is ${\rm diam}(G) = \max\{d(v,w) : v,w \in V(G)\}$.
Let $W\subseteq V(G)$.
The  \emph{open neighborhood} of $W$ is $N(W)=\cup_{v\in W} N(v)$ and the  \emph{closed neighborhood} of $W$ is $N[W]=\cup_{v\in W} N[v]$.
Let $u,v \in V(G)$ be  a pair of vertices such that  $d(u,w)=d(v,w)$ for all $w\in V(G)\setminus\{u,v\}$, i.e., either $N(u)=N(v)$ or $N[u]=N[v]$. In both cases, $u$ and $v$ are said to be \emph{twins}.

Let $H$ and $G$ be a pair of graphs.
The graph $ H $ is called a \emph{subgraph} of $ G $ if it can be obtained from $ G $ by removing some of the edges and vertices. Moreover, a subgraph $ H $ of $ G $ is called \emph{induced} if it can be obtained from $ G $ by removing some vertices. 
An induced subgraph of $ G $ made by $ W $, a subset of $ V(G) $, is denoted by $ G[W] $, and its edge set is $ \{vw \in E(G) : v \in W,w \in W\} $.
The graph $H$ is a  \emph{minor} of $G$ if it can be obtained from $G$ by removing some vertices and by removing and contracting edges.
 
Let $ D $ be a subset of $ V(G) $. $ D $ is called a \emph{dominating set} whenever $N[D]=V(G)$. Also, the \emph{domination number} $\gamma(G)$ is the cardinality of smallest dominating set for $ G $.

Let $K_n$, $K_{h,n-h}$, $ K_{1,n-1}$, $P_n$, $W_n$ and $C_n$ denote complete graph, complete bipartite graph, star, path, wheel and cycle, respectively, which the order of each is n.
For undefined terminology and notation,  we refer the reader to \cite{chlezh11}.

The remainder of this paper is organized into five more sections as follows. 
Continuing this section is devoted to introducing the zero forcing sets, the zero forzing number $Z(G)$ of a connected graph $G$, power dominating sets and the power domination number $\gamma_p(G)$ of a connected graph $G$ are first introduced.
In Section 2, we present a brief description of known and new results. 
In Section 3, some contributions involving graphs with high maximum degree are presented.
Finally, in Section 4 to 6, some results are presented for mycieleskian graphs, central graphs and middle graphs.

The concept of \emph{zero forcing} can be described via the following coloring game on the vertices of a given graph $G=(V,E)$. 
Let $U$ be a proper subset of $V$.
The elements of $U$ are colored black, meanwhile the vertices of $W=V\setminus U$ are colored white.
The color change rule is: 

\begin{center}
{\bf \small If $u \in U$ and exactly one neighbor $w$ of $u$ is white, then change the color of $w$ to black.}
\end{center}

In such a case, we  denote this by $u \rightarrow w$, and we say, equivalentely, that   $u$ forces $w$, that $u$ is a forcing vertex of $w$ and also that $u \rightarrow w$ is a force. 
The \emph{closure} of $U$, denoted $cl(U)$, is the set of black vertices obtained after the color change rule is applied until no new
vertex can be forced; it can be shown that $cl(U)$ is uniquely determined by $U$ (see \cite{AIM08}).

\begin{defi}[\cite{AIM08}]
A set $U \subseteq V(G)$ is called a \emph{zero forcing set} of $G$ if $cl(U)=V(G)$. 
\end{defi}

A \emph{minimum zero forcing set}, a \emph{ZF-set} for short,  is a zero forcing set of minimum cardinality. 
The \emph{zero forcing number} of $G$ , denoted by $Z(G)$, is the cardinality of a ZF-set.

A \emph{chronological list of forces} ${\cal F}_U$ associated with a  set $U$ is a sequence
of forces applied to obtain $cl(U)$ in the order they are applied. 
A \emph{forcing chain} for the chronological list of forces ${\cal F}_U$ is a maximal sequence of vertices $(v_1, . . . , v_k )$ such that
the force $v_i \rightarrow v_{i+1}$   is in ${\cal F}_U$ for $1 \le i  \le k-1$.
Each forcing chain induces a distinct path in $G$, one of whose endpoints is
in $U$; the other is called a terminal.
Notice that a zero forcing chain can consist of a single vertex $(v_1)$, and this happens   if $v_1 \in U$ and $v_1$ does not perform a force.
Observe also that any two forcing chains are disjoint.

\begin{prop}[\cite{erkayi17}]\label{Z1}
Let $G$ be a graph of order $n$.
Then,  $Z(G)=1$ if and only if $G$ is the  path $P_n$.
\end{prop}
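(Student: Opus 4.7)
The plan is to handle the two implications separately. For the forward direction, assume $G = P_n$ with vertices $v_1, v_2, \ldots, v_n$ in the natural path order, and take $U = \{v_1\}$. The force $v_1 \rightarrow v_2$ is legal, since $v_1$'s unique neighbor is $v_2$; inductively, once $v_1, \ldots, v_i$ are black, vertex $v_i$ has $v_{i+1}$ as its only white neighbor, so $v_i \rightarrow v_{i+1}$. Hence $cl(U) = V(G)$, giving $Z(P_n) \le 1$, and the reverse inequality is immediate for $n \ge 2$ because the empty set forces nothing.

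For the converse, suppose $Z(G) = 1$ with ZF-set $U = \{u_1\}$. Since every forcing chain contains exactly one vertex of $U$ and any two chains are disjoint, the chronological list of forces ${\cal F}_U$ consists of a single forcing chain $(u_1, u_2, \ldots, u_k)$, and the identity $cl(U) = V(G)$ forces $k = n$ (the case $n = 1$ is trivial). The heart of the argument is the following structural claim, which I would prove by induction on $i$: for every $j \le i$ one has $\deg(u_1) = 1$, $\deg(u_j) = 2$ when $1 < j < n$, and $N(u_j) \subseteq \{u_{j-1}, u_{j+1}\}$. The base case is immediate, since initially every neighbor of $u_1$ is white, so in order to perform any force $u_1$ must have exactly one neighbor, forcing $\deg(u_1) = 1$. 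For the inductive step, just before $u_i$ is about to force, the black set is $\{u_1, \ldots, u_i\}$; by the hypothesis each earlier $u_j$ with $j < i-1$ has both neighbors inside $\{u_{j-1}, u_{j+1}\}$ and thus has no white neighbors and in particular is not adjacent to $u_i$. Hence $u_i$'s only candidate black neighbor is $u_{i-1}$, and to carry out the next force it needs exactly one white neighbor; this yields $\deg(u_i) = 2$, and that unique white neighbor must be $u_{i+1}$ by the very definition of the forcing chain. Applying the same analysis to the terminal $u_n$ (whose neighbors all lie in $\{u_1, \ldots, u_{n-1}\}$) gives $\deg(u_n) = 1$ with unique neighbor $u_{n-1}$. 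Consequently $E(G) = \{u_i u_{i+1} : 1 \le i \le n-1\}$ and $G \cong P_n$.

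The main obstacle, although a mild one, is ruling out potential chords $u_i u_j$ with $|i - j| \ge 2$: such an edge would immediately inflate the degree of an earlier $u_j$ beyond the value prescribed by the inductive claim. I handle this by carrying the stronger hypothesis $N(u_j) \subseteq \{u_{j-1}, u_{j+1}\}$ through the induction rather than only a bare degree count, which makes the chord-exclusion automatic at each step.
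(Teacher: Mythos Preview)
Your argument is correct. Note, however, that the paper does not supply its own proof of this proposition: it is quoted as a known result from \cite{erkayi17}, so there is no in-paper argument to compare against. Your approach---observing that a one-element ZF-set yields a single forcing chain $(u_1,\ldots,u_n)$ and then proving by induction that $N(u_j)\subseteq\{u_{j-1},u_{j+1}\}$ for each $j$---is the standard one, and carrying the full neighborhood containment (rather than just a degree count) through the induction is exactly what makes the chord-exclusion automatic. The only cosmetic remark is that what you call the ``forward direction'' is actually the implication $G\cong P_n \Rightarrow Z(G)=1$, i.e., the reverse implication of the biconditional as stated; this does not affect the correctness of the proof.
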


A graph is outerplanar if it has a crossing-free embedding in
the plane such that all vertices are on the same face.
The \emph{path cover number} $P(G)$ of a graph $G$ is the smallest positive integer $k$ such
that there are $k$ vertex-disjoint induced paths $P_1, \ldots, P_k$ in $G$  that cover all
the vertices of $G$, i.e., $\displaystyle V(G) = \bigcup _{i=1}^k V(P_i)$.

\begin{prop}[\cite{bbfhhsh10}] 
For any graph  $G$, $P(G) \le Z(G)$.
\end{prop}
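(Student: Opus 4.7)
The plan is to exhibit, given any minimum zero forcing set, a collection of vertex-disjoint induced paths of size $Z(G)$ that cover $V(G)$. The necessary combinatorial object is already present in the preceding exposition: the forcing chains associated with a chronological list of forces.

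First I would fix a ZF-set $U$ of $G$ with $|U|=Z(G)$ together with a chronological list of forces $\mathcal{F}_U$ realizing $cl(U)=V(G)$, and consider the family of forcing chains determined by $\mathcal{F}_U$. Each chain has as its first vertex a vertex of $U$ (the only vertices that are black before any force is applied), and conversely every $u\in U$ is the first vertex of a unique chain (possibly the trivial chain $(u)$ if $u$ never performs a force). Since each non-initial vertex $w$ is forced exactly once in $\mathcal{F}_U$, it belongs to exactly one chain, namely the chain containing its unique forcer. Hence the forcing chains partition $V(G)$ into exactly $|U|=Z(G)$ classes.

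Next I would verify that each forcing chain $(v_1,v_2,\dots,v_k)$ induces an induced path in $G$. Consecutive vertices are adjacent by definition of a force. The only point requiring argument is the absence of chords. Suppose, towards a contradiction, that $v_iv_j\in E(G)$ with $j>i+1$. Consider the moment in $\mathcal{F}_U$ at which the force $v_i\to v_{i+1}$ is applied. At that instant $v_i$ is black and $v_{i+1}$ is its only white neighbor. However, $v_j$ is forced later (it appears later in its chain than $v_{i+1}$, which itself is forced after $v_i\to v_{i+1}$ has been applied at the earliest), so $v_j$ is still white at that moment; being also a neighbor of $v_i$, this contradicts the color change rule. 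Therefore $G[\{v_1,\dots,v_k\}]$ is precisely the path $v_1v_2\cdots v_k$.

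Combining the two steps, the $Z(G)$ forcing chains produced by $\mathcal{F}_U$ are vertex-disjoint induced paths whose union covers $V(G)$, so by definition of the path cover number one obtains $P(G)\le Z(G)$. The only genuinely non-obvious step is the chord-free argument in the previous paragraph; everything else is direct unpacking of the definitions of forcing chains and $P(G)$.
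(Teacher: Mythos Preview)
The paper does not supply its own proof of this proposition; it is quoted as a known result from \cite{bbfhhsh10}. Your argument is correct and is in fact the standard one: it simply unpacks the forcing-chain machinery that the paper has just introduced (the paper even states that ``each forcing chain induces a distinct path in $G$'' and that any two forcing chains are disjoint), so your write-up is exactly the kind of verification the surrounding exposition invites.

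One small comment on the chord-free step, which is the only place where anything needs checking. You implicitly use that the forces along a single chain occur in increasing order within the chronological list $\mathcal{F}_U$; this is true because $v_{i+1}\to v_{i+2}$ cannot be applied until $v_{i+1}$ is black, which only happens after $v_i\to v_{i+1}$. It would be worth saying this in one sentence, together with the observation that $v_j\notin U$ for $j\ge 2$ (so $v_j$ really is white until it is forced). With that clarification the argument is complete.
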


\begin{theorem}[\cite{ddr12}] \label{mainzf}
Let $G$ be a graph of order $n\ge5$.
Then,  $Z(G)=2$ if and only if $G$ is an outerplanar graph with $P(G)=2$.
\end{theorem}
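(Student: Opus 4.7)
\medskip

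\noindent\textbf{Proof plan.} I would prove the two implications separately, with the backward direction being more delicate.

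\medskip

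\noindent\emph{Necessity} ($Z(G)=2\Rightarrow G$ outerplanar with $P(G)=2$). Fix a ZF-set $U=\{x,y\}$ and a chronological list of forces $\mathcal{F}_U$. The two forcing chains $C_x=(x=x_1,x_2,\ldots,x_s)$ and $C_y=(y=y_1,y_2,\ldots,y_t)$ are disjoint, together cover $V(G)$, and each induces a path in $G$, so they witness $P(G)\le 2$. Since $n\ge 5$, Proposition~\ref{Z1} forbids $G$ from being a path (otherwise $Z(G)=1$), so $P(G)=2$. The heart of this direction is outerplanarity. I would draw $C_x$ from left to right along the top and $C_y$ from left to right along the bottom, and show that the remaining edges (the cross-edges between $C_x$ and $C_y$, plus any chords inside a chain) can be routed without crossings in the strip between the two paths, with every vertex on the outer face. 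The key combinatorial fact to extract from $\mathcal{F}_U$ is a \emph{non-crossing property}: if $x_iy_j$ and $x_ky_\ell$ are both edges with $i<k$, then $j\le \ell$ (and symmetrically). This is forced because at the moment $x_i$ performs its force $x_i\to x_{i+1}$, every other neighbor of $x_i$ must already be black, so every $y_j$ adjacent to $x_i$ was colored before $y_\ell$ in the chronological order; an inversion would contradict the fact that forcing chains do not ``revisit'' vertices. Chords inside a chain are handled the same way. This non-crossing edge pattern between two disjoint paths is exactly what an outerplanar embedding requires.

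\medskip

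\noindent\emph{Sufficiency} ($G$ outerplanar with $P(G)=2\Rightarrow Z(G)=2$). Fix two vertex-disjoint induced paths $P_1=(u_1,\ldots,u_s)$ and $P_2=(v_1,\ldots,v_t)$ whose union is $V(G)$, together with an outerplanar embedding of $G$ in which all vertices lie on the outer face. I would first argue that, using the outerplanar embedding, one can choose $P_1$ and $P_2$ so that $u_1$ and $v_1$ are consecutive along the outer face (swapping, reversing, or re-selecting one of the paths when necessary; at worst one can exchange tails of the paths at a shared cross-edge). Then I claim $U=\{u_1,v_1\}$ is a zero forcing set. The verification is inductive along the outer face: because the embedding has no crossings, for the currently leftmost uncolored vertex on either path, all of its already-processed neighbors lie on one side and its unique white neighbor is its successor on its own path, so a force can be performed. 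Iterating alternately along $P_1$ and $P_2$ exhausts $V(G)$, so $Z(G)\le 2$; and $Z(G)\ge P(G)=2$ by the cited bound.

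\medskip

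\noindent\textbf{Main obstacle.} The hard step is the geometric/combinatorial translation in both directions between the dynamics of zero forcing on two chains and the topological condition of outerplanarity. In the necessity part, the subtlety is ruling out crossings using \emph{only} the order in which vertices are forced; in the sufficiency part, the subtlety is choosing the right endpoints of the given induced paths so that the forcing can actually propagate. Both hinge on the same non-crossing property of edges between two parallel paths, which I would isolate as a lemma and invoke on both sides.
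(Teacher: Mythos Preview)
The paper does not prove this theorem at all: it is quoted from \cite{ddr12} and used as a black box (it immediately feeds into the corollary about outerplanar graphs with $P(G)=2$), so there is no ``paper's own proof'' to compare your proposal against.

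That said, a brief comment on your outline. Your necessity argument is essentially the right one and matches the standard approach: the two forcing chains give two vertex-disjoint induced paths covering $V(G)$, and the timing argument you sketch does yield the non-crossing property (if $x_iy_j,x_ky_\ell\in E(G)$ with $i<k$ and $j>\ell$, then $\tau(y_j)<\tau(x_{i+1})\le\tau(x_k)<\tau(y_{\ell+1})\le\tau(y_j)$, a contradiction), from which outerplanarity follows by drawing the two chains as the top and bottom of a strip. Your sufficiency direction, however, is where the real work in \cite{ddr12} lies, and your sketch is too loose at the crucial step: given an arbitrary induced $2$-path cover and an arbitrary outerplanar embedding, the two paths need not be contiguous arcs of the outer boundary, and ``exchanging tails at a cross-edge'' is not by itself enough to repair this without a more careful inductive or structural argument. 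Row's proof handles this by working with the cut-vertex structure of the outerplanar graph rather than by directly massaging a given path cover; if you want to push your geometric approach through, you would need to prove a lemma guaranteeing a \emph{compatible} $2$-path cover (one whose paths are arcs of the outer face), which is not automatic.
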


Zero forcing is closely related to power domination, because power
domination can be described as a domination step followed by the zero forcing process or, equivalentely,  zero forcing can be described as power domination without the domination step.
In other words, the power domination process on a graph $G$ can be described
as choosing a set $S \subset V (G)$ and applying the zero forcing process to the closed neighbourhood $N[S]$ of $S$. 
The set $S$ is thus a power dominating set of $G$ if and only if $N[S]$ is a zero forcing set for $G$.

\begin{defi}[\cite{hahehehe02}]{\rm
Let $ G $ be a graph and $ S \subseteq V(G) $. The set $S$ is called a \emph{power dominating set} of $G$ if $cl(N[S])=V(G)$. }
\end{defi}

\noindent A \emph{minimum power dominating set}, a \emph{PD-set} for short,  is a power dominating set of minimum cardinality. 
The \emph{power dominating number} of $G$ , denoted by $\gamma_p (G)$, is the cardinality of a PD-set.

\begin{defi}[\cite{vavi16}]
If $ G $ is a graph and $ S_0=S \subseteq V(G) $, then the sets
$ S_i ( i > 0) $ of vertices monitored by $S_0$ at step $ i $ are as follows:

$ S_1=N_{G}[S_0] $ (domination step), and

$ S_{i+1}= \bigcup \{ N_{G}[v]:  v \in S_i ~such~ that \vert N_{G}[v]  \setminus S_i \vert \leq 1 $  (propagation steps).
\end{defi}
\section{Basic Results}

\noindent As a straight consequence of these definitions,  it is derived both  that $\gamma_p(G) \le Z(G)$ and $\gamma_p(G) \le \gamma(G)$.
Moreover, this pair of inequalities along with Theorem \ref{mainzf}, allow us to derive the following results.

\begin{cor}
Let $G$ be a graph of order $n$.
\begin{itemize}
\item If $G$ is outerplanar and $P(G)=2$, then $\gamma_p(G) \le 2$.
\item   $\Delta(G)=n-1$ if and only if $\gamma_p(G) = \gamma(G)=1$.
\end{itemize}
\end{cor}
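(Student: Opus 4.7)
The plan is to derive both items as more or less immediate consequences of the two inequalities $\gamma_p(G)\le Z(G)$ and $\gamma_p(G)\le \gamma(G)$ stated just above the corollary, combined (for the first item) with Theorem \ref{mainzf}.

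For the first item, I would split on the order of $G$. If $n\ge 5$, then Theorem \ref{mainzf} applied to an outerplanar graph $G$ with $P(G)=2$ yields $Z(G)=2$, so the inequality $\gamma_p(G)\le Z(G)$ immediately gives $\gamma_p(G)\le 2$. For the small cases $n\le 4$ the claim is essentially trivial: any single vertex together with one of its neighbors (or just any one vertex, if the graph is small enough) forms a power dominating set, so $\gamma_p(G)\le 2$ directly. I would just remark on this briefly rather than enumerate cases.

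For the second item I would prove the two implications separately. Suppose first that $\Delta(G)=n-1$, and let $v$ be a vertex of maximum degree. Then $N[v]=V(G)$, so $\{v\}$ is a dominating set, giving $\gamma(G)=1$; combined with $\gamma_p(G)\le\gamma(G)$ and the fact that $\gamma_p(G)\ge 1$ (as $G$ is nonempty), this yields $\gamma_p(G)=\gamma(G)=1$. Conversely, if $\gamma(G)=1$, pick $v$ with $N[v]=V(G)$; then $v$ is adjacent to the remaining $n-1$ vertices, so $\deg(v)=n-1$ and hence $\Delta(G)=n-1$ (the reverse bound $\Delta(G)\le n-1$ is automatic). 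Note that the hypothesis $\gamma_p(G)=1$ is not actually needed for this direction, which is a pleasant feature worth noting.

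There is no real obstacle here: the statement is a direct corollary and all the work is done by the inequalities and by Theorem \ref{mainzf}. The only mild care needed is the small-order cases in the first item, which could otherwise be hidden by the $n\ge 5$ hypothesis of Theorem \ref{mainzf}.
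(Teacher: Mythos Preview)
Your proposal is correct and matches the paper's approach exactly: the paper does not give a detailed proof but simply states that the corollary follows from the inequalities $\gamma_p(G)\le Z(G)$ and $\gamma_p(G)\le\gamma(G)$ together with Theorem~\ref{mainzf}, which is precisely what you unpack. Your attention to the small-order cases $n\le 4$ (outside the scope of Theorem~\ref{mainzf}) is a reasonable addition that the paper leaves implicit.
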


We end this section by presenting a first list of new and known results involving this parameter along with a Table containing  information of some basic graph families.

\begin{prop}
If $G$ is a connected graph of order al most 5, then  $\gamma_p(G)=1$.
Moreover,
\begin{itemize}
	\item The smallest connected graph $G$  such that $\gamma_p(G)=2$ is the H-graph (see Figure \ref{HW8} (a)).
	\item Three of the smallest connected graph $G$  with no twin vertices such that $\gamma_p(G)=2$ are in the Figure \ref{HW8} (b), (c) and (d).
	\end{itemize}
\end{prop}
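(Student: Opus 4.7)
The plan is to dispatch the three items of the proposition separately, each by a short structural argument followed by a small direct check.

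For the first item I would show that, for every connected graph of order $n\le 5$, any vertex $v$ of maximum degree $\Delta(G)$ is already a power dominating set. The clean case is $\Delta(G)\ge n-2$: after the domination step, $V(G)\setminus N[v]$ has at most one vertex $w$, and connectedness produces a neighbor $u\in N[v]$ of $w$; since $w$ is the only white vertex of $G$, $u$ has a unique white neighbor and performs $u\to w$. This handles all $n\le 4$ and all $n=5$ with $\Delta\ge 3$. The remaining case is $n=5$ with $\Delta\le 2$, that is, $G\in\{P_5,C_5\}$; here Proposition \ref{Z1} yields $\gamma_p(P_5)\le Z(P_5)=1$, and for $C_5$ the two neighbors of any vertex $v$ each have a single white neighbor after the domination step and hence force it.

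For the second item I would first verify the upper bound: writing the H-graph (Figure \ref{HW8}(a)) as the two paths $abc$ and $def$ joined by the edge $be$, the set $S=\{a,d\}$ satisfies $N[S]=\{a,b,d,e\}$, after which $b\to c$ and $e\to f$ close the graph, giving $\gamma_p(H)\le 2$. Combined with the first item, this makes $H$ a candidate smallest example, so it remains only to rule out $\gamma_p(H)=1$. The automorphisms of $H$ act transitively on the leaf set $\{a,c,d,f\}$ and on the inner-vertex set $\{b,e\}$, so I would only test the two representatives $\{a\}$ and $\{b\}$: in the first case the only vertex of $N[a]$ with any white neighbor is $b$, and it has two white neighbors $c$ and $e$; in the second case the only such vertex is $e$, again with two white neighbors $d$ and $f$. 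In neither case does any force fire, so $\gamma_p(H)\ge 2$.

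For the third item the approach is the same two-step verification applied to each of the three graphs in Figure \ref{HW8}(b)--(d): exhibit a 2-element PD-set and trace the forcing chain on its closed neighborhood to obtain the upper bound, then refute $\gamma_p=1$ by testing one representative per automorphism orbit and confirming that after the domination step every black vertex has either zero or at least two white neighbors. The main obstacle is not any of these per-graph calculations but the implicit claim that these three are among the \emph{smallest} twin-free connected graphs with $\gamma_p=2$: justifying that claim requires enumerating the connected twin-free graphs of strictly smaller order and checking that each admits a single-vertex power dominating set, and this exhaustive verification is the real work of the proposition.
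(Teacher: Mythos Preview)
The paper states this proposition without proof, so there is no argument in the paper to compare your proposal against.

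Your approach is sound. The first item is handled cleanly by the split $\Delta\ge n-2$ versus the two exceptional graphs $P_5$ and $C_5$; this is essentially the content of Proposition~\ref{gpn-3}(1) specialised to small order. The H-graph computation in the second item is correct, and together with the first item it shows that $H$ is a smallest-order example. One point worth flagging: your argument does not establish that $H$ is the \emph{unique} smallest example, and in fact at the level of order alone it is not, since $K_{3,3}$ is another connected six-vertex graph with $\gamma_p=2$. If ``the smallest'' is to be read as ``smallest in order, then in size,'' you would also want to observe that among six-vertex trees the H-graph is the only one with more than one vertex of degree exceeding~$2$, hence by Theorem~\ref{sp} the unique six-vertex tree with $\gamma_p=2$, and therefore the unique edge-minimal six-vertex example. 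For the third item you have correctly identified both the per-graph verification and the enumeration that the word ``smallest'' entails; since the paper gives no proof and does not display the graphs' vertex labels, your outline is as far as one can reasonably go.
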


\begin{figure}[!h]
	\centerline{\includegraphics[height=4cm]{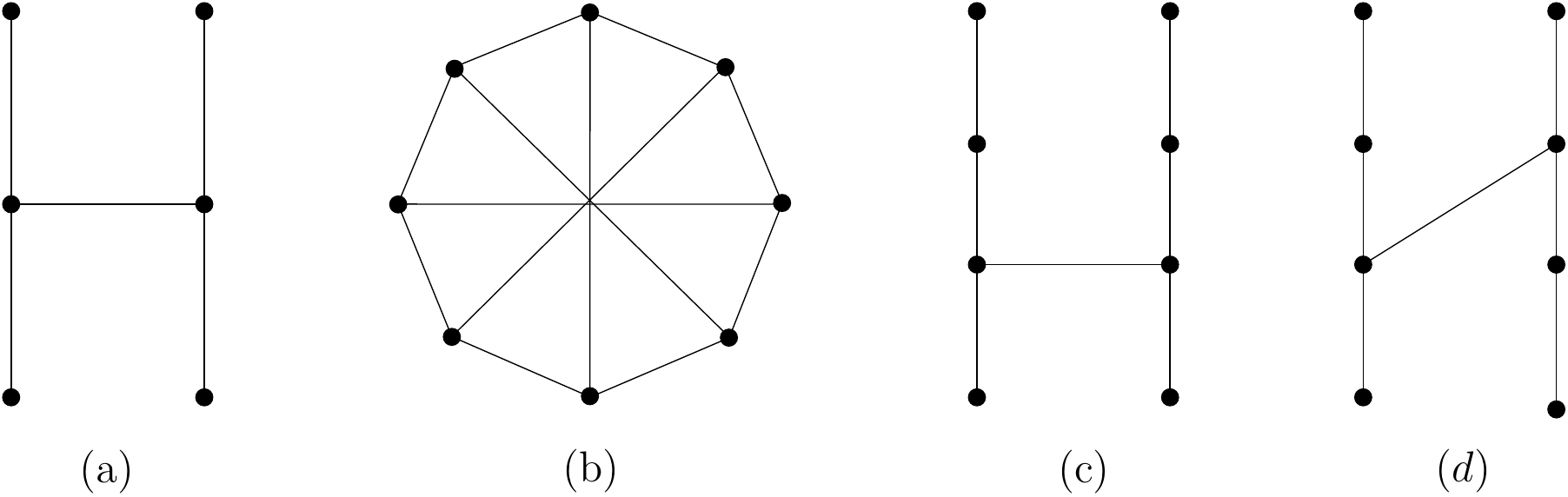}}
	\caption{Some small graphs}
	\label{HW8}
\end{figure}

\begin{table}[ht]\label{tb1}
\begin{center}

\begin{tabular}{|c|cccccccc|} \hline
$G$   &   $P_n$     &  $C_n$   & $K_n$ &   $K_{1,n}$ &   $K_{2,n}$ &   $K_{h,n-h}$ &  $W_n$  & \\ \hline \hline
$\gamma_p (G)$ & 1   &    1   &   1  &   1  &   1   &  2   & 1  &       \\ \hline
$\gamma(G)$ & {\scriptsize$\lfloor\frac{n+2}{3}\rfloor$} & {\scriptsize$\lfloor\frac{n+2}{3}\rfloor$} &    1   & $1$   & 2 & 2 & 1 &    \\  \hline
$Z(G)$      &  1 & 2 &    $n-1$   & $n-2$   & $n-2$ & $n-2$ & 3 &    \\  \hline

\end{tabular}
 \caption{Power domination, domination  and zero forcing numbers of some basic graph families.}
 \label{lio}
 \end{center}
\end{table}

\begin{prop}
	Let $G = K_{r_{1},\cdots,r_k}$ be the complete $k$-partite graph  with  $2\le k$ and  $1 \le r_{1} \leq r_{2} \leq \cdots \leq r_{k} $ and $V(G)=\cup_{i=1}^kV_i$.
	Let $G_e$ the graph obtained from $G$ by deleting an edge $ e=vw \in E(G)$.
	Then
	\begin{enumerate}[{\rm (1)}] 
	
	\item If $ r_{1} \leq 2 $, then $ \gamma_p(G)=1 $. 
	
    \item If $ r_{1} \geq 3 $, then $ \gamma_p(G)=2 $.
    
	\item If $r_1\le 2$, then $ \gamma_p(G_e)=1$.
	
	\item If $r_1=3$, then 
	$\gamma_p(G_e)=\left\{\begin{array}{lll}
    1, & {\rm if} & \{v,w\}\cap V_1\neq \emptyset \\
      
    2, &  & otherwise. \\
\end{array}\right.$

	\item If $4\le r_1$, then $ \gamma_p (G_e)=2$.
	
	\end{enumerate}
\end{prop}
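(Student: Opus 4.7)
The strategy for both halves of the proposition is uniform: for each upper bound $\gamma_p(\cdot)\le t$, I exhibit an explicit set of size $t$ and verify propagation; for each matching lower bound, I argue that propagation must stall from any strictly smaller candidate set. The single recurring obstruction is that if $S=\{u\}$ and $u$ lies in a part $V_i$, then the white set $V_i\setminus\{u\}$ consists of false twins with respect to $V(G)\setminus V_i$, so no outside black vertex sees exactly one of them unless the deletion of $e$ (in the $G_e$ cases) breaks this symmetry. This is exactly what drives the case splits.

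For (1), I take $S=\{u\}$ with $u\in V_1$: the domination step covers all of $V(G)$ except $V_1\setminus\{u\}$, which has size at most $1$, and a single propagation step from any vertex in $V_2\cup\cdots\cup V_k$ finishes the job. For (2), $\gamma_p(G)\le 2$ follows because $\{u_1,u_2\}$ with $u_i\in V_i$ for distinct $i\in\{1,2\}$ is already a dominating set of $G$, while the matching lower bound is the twin argument above: for $u\in V_i$, the white set $V_i\setminus\{u\}$ has size $r_i-1\ge r_1-1\ge 2$, every black vertex outside $V_i$ sees all of it, and $u$ itself sees none of it, so no force ever fires.

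For (3) and (5), the blueprint of (1) and (2) carries over to $G_e$ with only minor bookkeeping for the missing edge. In (3), I pick $u\in V_1$ avoiding $\{v,w\}$ when possible and check that one propagation step clears both $V_1\setminus\{u\}$ and the vertex $w$ (when $u$ was the $V_1$-endpoint of $e$) via a black vertex in $w$'s part distinct from $w$. In (5), the extra slack $r_1\ge 4$ ensures that the deletion of a single edge cannot bring any black vertex's white-neighbour count below two, so the lower-bound argument of (2) carries over; for the upper bound, $\{u_1,u_2\}$ with $u_i\in V_i\setminus\{v,w\}$ (nonempty because $r_i\ge 4$) still dominates $G_e$.

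Part (4), with $r_1=3$, is the delicate case and the main obstacle. When $\{v,w\}\cap V_1\ne\emptyset$, say $v\in V_1$, I would exhibit a one-vertex PDS by leveraging the asymmetry introduced by $e$: taking $u\in V_1\setminus\{v\}$, the endpoint $w$ has lost its edge to $v\in V_1$, so $w$'s white neighbours in $V_1$ reduce to the single vertex of $V_1\setminus\{u,v\}$, which $w$ forces; a short cascade then forces $v$ via any remaining black vertex in a part different from $V_1$. When $\{v,w\}\cap V_1=\emptyset$, the upper bound $\le 2$ again uses two dominators chosen outside $\{v,w\}$, and the matching lower bound requires a careful case analysis on the location of the candidate singleton $u$ and on whether $u\in\{v,w\}$. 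The hardest subcase is when $u$ lies in the same part as $v$ or $w$: the missing edge reduces the white-neighbour count of the other endpoint of $e$ by one, and the value $r_1=3$ sits right at the threshold, so attention to the sizes of the other parts and to the exact location of $u$ inside its part becomes essential. I would expect this to be the step where the argument is most delicate, and where the balance between ``$r_1=3$'' and ``$r_1\ge 4$'' really shows up.
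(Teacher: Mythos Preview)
Your strategy matches the paper's throughout: identical singleton/pair candidates for the upper bounds and the same false-twin obstruction for the lower bounds. One small slip in (3): when $u$ happens to be the $V_1$-endpoint of $e$ and $w$ remains white, the vertex that forces $w$ must lie in a part \emph{different} from $w$'s, not ``in $w$'s part distinct from $w$'' (vertices in $w$'s own part are not adjacent to $w$).

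Your caution on the lower bound in (4) when $\{v,w\}\cap V_1=\emptyset$ is well placed---more so than the paper's own treatment, which dispatches this case with ``proceed as in item (2).'' That shortcut is too quick: the twin argument from (2) needs every black vertex outside the part $V_i\ni u$ to see \emph{all} of the white set $V_i\setminus\{u\}$, but the missing edge $vw$ breaks this when $u\in V_i\ni v$ with $u\ne v$. If moreover $|V_i|=3$, then $w$'s sole white neighbour is the third vertex of $V_i$, and a force fires. Concretely, take $G=K_{3,3,3}$ with parts $V_1=\{a_1,a_2,a_3\}$, $V_2=\{b_1,b_2,b_3\}$, $V_3=\{c_1,c_2,c_3\}$ and delete $e=b_1c_1$, so $\{v,w\}\cap V_1=\emptyset$. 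Then $\{b_2\}$ is a power dominating set of $G_e$: domination leaves $\{b_1,b_3\}$ white, $c_1\to b_3$ (since $c_1$ no longer sees $b_1$), and then $a_1\to b_1$. Hence $\gamma_p(G_e)=1$, contradicting the stated value $2$. The dichotomy in (4) should really hinge on whether $\{v,w\}$ meets \emph{some} part of size exactly $3$, not specifically $V_1$; neither your outline nor the paper's argument can establish the lower bound as written, because it is false in general.
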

\begin{proof}
\begin{enumerate}[{\rm (1)}] 
	
	\item Take $v_1\in V_1$. 
	Notice that $N[v_1]=V(G)\setminus [V_1-v_1]$.
	If $r_1=1$, then $\{v_1\}$ is a dominating set of $G$, i.e., $\gamma_p(G)=1$.
	Suppose that $r_1=2$ and $V_1=\{v_1,v_1'\}$.
	Then, for any vertex $u\not\in V_1$, $u \rightarrow v_1'$, which means that $\gamma_p(G)=1$, as $N[v_1]=V(G)\setminus \{v_1'\}$.
	
	\item For every $u\in V_i$, $N[u]=V(G)\setminus [V_i-u]$.
	Thus, $\gamma_p(G)\ge2$.
	Take $S=\{v_1,v_2\}$, where $v_1\in V_1$ and $v_2\in V_2$.
	Notice that $N[S]=V(G)$.
	Hence, $\gamma_p(G)=\gamma(G)=2$.

	\item If $\{v,w\}\cap V_1=\emptyset$, then proceed as in item (1).
	Suppose w.l.o.g. that $v\in V_1$.
	Notice that $N[v]=V(G)\setminus [(V_1-v)\cup \{w\}]$.
	If $r_1=1$, then for every $u\not\in\{v,w\}$, $u\rightarrow w$.
	Thus, $\gamma_p(G_e)=1$.
	Otherwise, suppose that $r_1=2$ and $V_1=\{v,v'\}$.
	Then, $N[v']=V(G)-v$ and for any vertex $u\not\in \{v,v',w\}$, $u \rightarrow v$.
	Hence, $\gamma_p(G_e)=1$.
	
	\item If $\{v,w\}\cap V_1=\emptyset$, then proceed as in item (2).
	Otherwise, suppose w.l.o.g. that $v\in V_1$ and $V_1=\{v,v',v''\}$.
	Notice that $N[v']=V(G)\setminus \{v,v''\}$.
	Next, observe that $w \rightarrow v''$ and for any vertex  $u\not\in \{v,v',v'',w\}$,   $u  \rightarrow v$.
	Hence, $\gamma_p(G_e)=1$.

	\item Notice that, for every $u\in V(G)$, $cl(u)=N[u]$ and $|N[u]|\le n-3$.
	Thus, $\gamma_p(G)\ge2$.
	Moreover, for every pair of vertices $\{u_1,u_2\}$ such that $ u_1 \in V_i $ and $ u_2 \in V_j $ and $\{u_1,u_2\}\cap\{v,w\}=\emptyset$, $N[\{u_1,u_2\}]=V(G)$.
	Hence, $\gamma_p(G_e)=\gamma(G_e)=2$.
	
\end{enumerate}
\vspace{-.4cm}\end{proof}

\vspace{.4cm}
A tree is called a \emph{spider} if it has a unique vertex of degree greater than 2.  
We define the \emph{spider number}  of a tree $T$,  denoted  by $sp(T )$, to be the minimum number of subsets into which $V (T )$ can be partitioned so that each subset induces a spider. 

\vspace{.2cm}
\begin{theorem} [\cite{hahehehe02}]\label{sp}
	For any tree $T$,  $\gamma_p(T) = sp(T)$.
\end{theorem}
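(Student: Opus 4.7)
My plan is to prove the two inequalities $\gamma_p(T) \ge sp(T)$ and $\gamma_p(T) \le sp(T)$ separately. The first admits a clean forcing-chain construction, while the second requires a more delicate argument about how forcing from different spider centers interacts in $T$.

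For $\gamma_p(T) \ge sp(T)$, I start from a minimum PD-set $S$ and fix a chronological list of forces witnessing $cl(N[S]) = V(T)$. The associated forcing chains partition $V(T)$, and each chain starts at some vertex of $N[S]$. For each chain I choose one $v \in S$ whose closed neighborhood contains the chain's starting vertex and assign the chain to $v$, with the convention that the singleton chain $(v)$ is assigned to $v$ itself. Let $V_v$ be the union of all chains assigned to $v$. The families $\{V_v\}_{v\in S}$ partition $V(T)$. The key structural claim is that $T[V_v]$ is a spider with center $v$: any non-$v$ vertex $u \in V_v$ adjacent to $v$ in $T$ lies in $N[S]$ and is therefore the start of a chain assigned to $v$; since each chain is a path in $T$ and distinct chains assigned to $v$ start at distinct neighbors of $v$, they lie in distinct components of $T-v$ and meet only at $v$. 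Hence $v$ is the only vertex of $V_v$ that can have degree greater than $2$ in $T[V_v]$, so $\{V_v\}_{v \in S}$ is a spider partition of size $|S|$, giving $sp(T) \le \gamma_p(T)$.

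For $\gamma_p(T) \le sp(T)$, I start from an optimal spider partition $V_1, \ldots, V_k$ and propose the PD-set $S = \{s_1, \ldots, s_k\}$ where $s_i$ is the unique vertex of degree greater than $2$ in $T[V_i]$ if such a vertex exists, and an endpoint of the path $T[V_i]$ otherwise. Within each spider $T[V_i]$ in isolation, $s_i$ power dominates $V_i$: after the domination step, each leg of the spider is a degree-$2$ path in $T[V_i]$ that propagates from $s_i$ to its leaf. The main obstacle is that inside $T$ a leg vertex of $T[V_i]$ may have extra neighbors in other $V_j$, and white cross-neighbors can stall the propagation. I would handle this by induction on $|V(T)|$: identify a spider in the partition whose cross-edges all lead to already-propagated parts of $T$ (acyclicity of $T$ ensures such a peelable spider exists), complete its forcing, and apply the inductive hypothesis to the rest. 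This peeling argument is the most delicate part of the proof, and is the one place where the global tree structure of $T$ is essential rather than a purely local forcing analysis.
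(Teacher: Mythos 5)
The paper does not prove this theorem; it imports it from the cited Haynes--Hedetniemi--Hedetniemi--Henning reference, so there is no internal proof to compare against and I judge your argument on its own terms.

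Your first direction, $\gamma_p(T)\ge sp(T)$, is essentially correct and is the natural forcing-chain argument. One point needs tightening: your assignment rule only pins down singleton chains $(v)$ with $v\in S$, but a non-singleton chain starting at some $v\in S$ could, under your rule, be handed to a different $w\in S$ adjacent to $v$; then $V_v$ need not contain $v$, and a set consisting only of chains rooted at distinct neighbors of $v$ lies in distinct components of $T-v$, so $T[V_v]$ would be disconnected. The fix is simply to decree that every chain starting at a vertex of $S$ is assigned to that vertex. With this convention each $V_v$ contains $v$, each other chain assigned to $v$ is confined to the component of $T-v$ containing its starting neighbor (these components are pairwise distinct and distinct from the one entered by $v$'s own chain, since chains are vertex-disjoint), and $T[V_v]$ is indeed a spider centered at $v$.

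The second direction contains a genuine gap, which you partly acknowledge. The peeling criterion ``a spider whose cross-edges all lead to already-propagated parts of $T$'' is vacuous at the start of the process: before anything has propagated, only a spider with no cross-edges qualifies, i.e.\ only the case $k=1$, so the induction never gets off the ground. Moreover the obstruction is not merely one of ordering: if $V_k$ is a leaf of the quotient tree $Q$ obtained by contracting each part ($Q$ is a tree because $T$ is a tree and each $T[V_i]$ is connected, so adjacent parts are joined by exactly one edge), with unique cross-edge $xy$, $x\notin V_k$, $y\in V_k$, then the portion of $y$'s leg beyond $y$ cannot be forced until $x$ is black, while $x$ cannot perform its forces in $T-V_k$ until $y$ is black; neither side can be ``completed'' before the other. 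What does work is a three-phase interleaving, by induction on $k$: (i) since $y$ is the only vertex of $V_k$ with a neighbor outside $V_k$, propagation inside $T[V_k]$ starting from $N[s_k]$ blackens all of $V_k$ except possibly the part of $y$'s leg strictly beyond $y$ --- in particular it blackens $y$ --- with no outside help; (ii) with $y$ black, every force of the process on $T-V_k$ (which observes all of $T-V_k$ by the inductive hypothesis applied to the spider partition $V_1,\dots,V_{k-1}$ with the same centers) remains valid in $T$, because $x$ is the only vertex whose neighborhood changed and its extra neighbor $y$ is already black; (iii) with $x$ black, $y$ forces the remainder of its leg. Writing this out closes the gap; note that the argument requires no optimality of the spider partition, only that it is one, which is all that is needed to conclude $\gamma_p(T)\le sp(T)$.
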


\begin{cor} 
	For any tree $T$, $\gamma_p(T) = 1$ if and only if $T$ is a spider.
\end{cor}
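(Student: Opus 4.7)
The plan is to deduce this directly from Theorem \ref{sp}, which identifies $\gamma_p(T)$ with the spider number $sp(T)$, so the entire task reduces to recognizing when $sp(T)=1$.

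First I would unpack the definition of $sp(T)$ in the extremal case $k=1$. A partition of $V(T)$ into a single subset is forced to be the trivial partition $\{V(T)\}$, and the induced subgraph $T[V(T)]$ is just $T$ itself. Hence $sp(T)=1$ if and only if $T$ is a spider (taking the convention, implicit in the definition given, that the single-part partition is admissible precisely when that part induces a spider; paths and stars are included as degenerate spiders since they have at most one vertex of degree $>2$).

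Next, I would invoke Theorem \ref{sp} to get $\gamma_p(T)=sp(T)$ for any tree $T$. Combining this equality with the previous observation yields
\begin{equation*}
\gamma_p(T)=1 \iff sp(T)=1 \iff T \text{ is a spider},
\end{equation*}
which is exactly the statement of the corollary.

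There is essentially no obstacle here; the only subtle point is the definition of a spider in degenerate cases (paths, single vertices). In those cases one should verify separately that $\gamma_p=1$ matches the intended reading, which is immediate: a single vertex dominates a path by the propagation rule, consistent with the entry for $P_n$ in Table \ref{lio}.
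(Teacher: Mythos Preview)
Your proposal is correct and is exactly the intended argument: the paper states this result as an immediate corollary of Theorem~\ref{sp} without giving a separate proof, and the deduction you describe---$\gamma_p(T)=1 \iff sp(T)=1 \iff T$ is a spider---is precisely what the word ``corollary'' is signaling. Your remark about the degenerate case of paths is well taken, since the paper's definition of spider literally requires a vertex of degree greater than~$2$; treating paths as degenerate spiders (so that $sp(P_n)=1$) is the only reading consistent with both Theorem~\ref{sp} and Table~\ref{lio}.
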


\begin{theorem}[\cite{zhka07}]
If $G$ is a planar (resp. outerplanar) graph of diameter at most 2 (resp. at most 3), then $\gamma_p(G)\le 2$ (resp. $\gamma_p(G)=1$).
\end{theorem}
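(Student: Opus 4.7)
The plan is to prove the two parts separately, in each case exhibiting an explicit power dominating set of the claimed size and then checking that the zero forcing phase completes.

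For the outerplanar case with $\mathrm{diam}(G)\le 3$, I would take $S=\{v\}$, where $v$ is a center of $G$. After the domination step, $N[v]=L_0\cup L_1$ is black, and the only white vertices are those in the layers $L_2$ and $L_3$ at distances $2$ and $3$ from $v$. The strategy is to exploit the rigidity of an outerplane embedding: since every vertex sits on the outer face, the vertices of $L_2\cup L_3$ can be listed cyclically around that face, and the non-crossing condition forces the white neighbors of each $u\in L_1$ to occupy a consecutive arc along the outer face. Choosing an extreme endpoint of such an arc should yield a vertex in $L_1$ with exactly one white neighbor, producing the first force; iterating along the outer face should sweep the black set around until every vertex of $L_2\cup L_3$ is forced. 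The special (disconnected-outer-face / pendant) sub-cases, e.g. tree-like outerplanar graphs, I would deal with by invoking the spider characterisation following Theorem~\ref{sp}.

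For the planar case with $\mathrm{diam}(G)\le 2$, I would take $S=\{v,w\}$, where $v$ is a vertex of maximum degree and $w$ is chosen outside $N[v]$ (when such a vertex exists) so as to maximise $|N(v)\cap N(w)|$. After the domination step, every remaining white vertex lies in $N(v)\setminus N[w]$ and sits at distance exactly $2$ from both $v$ and $w$. I would then use the planar density bound $|E(G)|\le 3|V(G)|-6$ (or, more precisely, $K_{3,3}$-minor exclusion) to argue that the bipartite graph between $N(v)$ and the white set cannot be too dense: some vertex of $N(v)$ must have a unique white neighbor and can initiate the forcing cascade. An iterative application of the same bound to the residual graph should then propagate blackness through the remaining white vertices.

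The main obstacle in both cases is orchestrating the forcing propagation after the domination step. For the outerplanar part the weak dual of an outerplane embedding is a tree, and I expect this tree structure to furnish a clean inductive order for the forces; for the planar/diameter-$2$ part the absence of such a linear structure is the genuine difficulty. A careful extremal choice of $w$, combined with a topological argument from planarity (to preclude the pathological configurations in which every vertex of $N(v)$ retains two or more white neighbors indefinitely), will be required to guarantee existence of a black vertex with exactly one white neighbor at each propagation step.
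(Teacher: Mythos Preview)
This theorem is quoted from \cite{zhka07} and is stated in the present paper without proof; there is no ``paper's own proof'' to compare against. Your proposal is therefore an attempt at an independent proof of a result the authors merely cite.

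On its merits, the proposal is a reasonable outline but contains genuine gaps. In the outerplanar part, the assertion that an ``extreme endpoint of such an arc'' in $L_1$ has exactly one white neighbor is not justified: a vertex $u\in L_1$ at the boundary of its arc can still have several neighbors in $L_2$ (and those $L_2$-vertices can in turn have several neighbors in $L_3$), so you have not exhibited the first force, let alone argued that the cascade continues. Invoking the spider characterisation for ``tree-like'' subcases does not help either, since an outerplanar graph of diameter~$3$ need not be a spider. In the planar part there is a slip: after dominating from $\{v,w\}$ the remaining white vertices lie \emph{outside} $N[v]\cup N[w]$, not in $N(v)\setminus N[w]$ as you wrote; they are at distance exactly~$2$ from $v$, hence each has a neighbor in $N(v)$, but they are not themselves in $N(v)$. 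More seriously, the plan to use $|E|\le 3|V|-6$ (or $K_{3,3}$-minor exclusion) to guarantee a black vertex with a unique white neighbor at every propagation step is only a heuristic: the global edge bound does not by itself force such a vertex to exist in the evolving bipartite graph between $N(v)$ and the white set, and you give no mechanism for the iteration. The actual proof in \cite{zhka07} requires a considerably more delicate structural analysis than a single density inequality.
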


\section{Graphs with large maximum degree}\label{md}

\begin{prop}\label{gpn-3}
Let $G$ a graph of order $n$ and maximum degree $\Delta$.
\begin{enumerate}[{\rm (1)}]

\item If $n-2\le \Delta \le n-1$, then  $\gamma_p(G)=1$.

\item If $n-4\le \Delta \le n-3$, then  $1 \le \gamma_p(G) \le 2$.

\end{enumerate}
 
\end{prop}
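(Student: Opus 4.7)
The plan is to fix a vertex $v$ of maximum degree $\Delta$ and work with the ``uncovered'' set $W = V(G) \setminus N[v]$, whose cardinality is $n - 1 - \Delta \in \{0, 1, 2, 3\}$ throughout the statement. In each case I will exhibit a power dominating set of size $1$ or $2$ built around $v$.

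Part (1) follows from taking $S = \{v\}$. If $\Delta = n-1$, then $N[v] = V(G)$ and $S$ is already a dominating set. If $\Delta = n-2$, write $W = \{w\}$; after the domination step only $w$ is white, and by connectedness $w$ has a neighbor $u \in N[v]$, which is then a black vertex with unique white neighbor $w$, so the force $u \to w$ closes $S$ and yields $\gamma_p(G) = 1$.

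For part (2), the bound $\gamma_p(G) \ge 1$ is immediate, so I must construct a PD-set of size $2$. When $|W| = 2$, writing $W = \{w_1, w_2\}$, the set $S = \{v, w_1\}$ leaves at most $w_2$ white; if $w_1 w_2 \notin E$ then any neighbor of $w_2$ (which must lie in $N[v]$ by connectedness) is a black vertex with unique white neighbor $w_2$ and performs the remaining force. When $|W| = 3$, writing $W = \{w_1, w_2, w_3\}$, I would branch on the structure of $G[W]$ together with the bipartite links from $W$ to $N[v]$: (a) if some $w_i$ is adjacent to both other vertices of $W$, then $\{v, w_i\}$ is already a dominating set; (b) if $G[W]$ has exactly one edge, say $w_1 w_2$, then the isolated vertex $w_3$ of $G[W]$ has all its neighbors in $N[v]$, so $\{v, w_1\}$ leaves only $w_3$ white and one forcing step completes the closure; (c) if $G[W]$ is edgeless and some $u \in N[v]$ is adjacent to two of the $w_i$'s, then $\{v, u\}$ again leaves at most one white vertex, handled as before; (d) finally, if $G[W]$ is edgeless and the sets $N(w_i) \cap N[v]$ are pairwise disjoint, then each $w_i$ has a private neighbor $u_i \in N[v]$ (non-emptiness forced by connectedness), and choosing $S = \{v, w_1\}$ allows the two consecutive forces $u_2 \to w_2$ and then $u_3 \to w_3$.

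The main obstacle is the bookkeeping in case (d): one must check that each forcing vertex $u_i$ has precisely one white neighbor at the moment it acts. This rests on the fact that every edge leaving $W$ lands in $N[v]$ in this sub-case (because the $w_i$'s are pairwise non-adjacent), so the disjointness of the sets $N(w_i) \cap N[v]$ prevents $u_2$ from seeing $w_3$, or $u_3$ from seeing $w_2$, which is exactly what the color-change rule demands. Verifying the exhaustiveness of (a)--(d) and the mutual exclusivity of the configurations they rule out is the only delicate part of the argument.
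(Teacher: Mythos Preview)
Your argument is correct and follows essentially the same route as the paper: fix a vertex of maximum degree, look at the small residual set $W=V(G)\setminus N[v]$, and exhibit a PD-set of size at most $2$ by cases on $|W|$ and the structure of $G[W]$. The only cosmetic difference is the finer four-way split (a)--(d) in place of the paper's two cases, and that in your sub-case (d) you take $S=\{v,w_1\}$ whereas the paper notes that $\{v\}$ alone already works there (each $u_i$ forces $w_i$ in parallel); this is immaterial for the bound $\gamma_p(G)\le 2$.
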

\begin{proof} Let $u$ a vertex such that $deg(u)=\Delta$, that is, such that $|N[u]|=\Delta+1$.

\vspace{.2cm}{\rm (1)} If $\Delta = n-1$, then $1 \le  \gamma_p(G) \le \gamma(G)=1$, which means that $\gamma_p(G)=1$.
Let $u$ a vertex such that $deg(u)=\Delta$, that is, such that $|N[u]|=\Delta+1$.
If $\Delta = n-2$, then $|N[u]|=n-1$, i.e., there exists  a vertex $w$ such that $V(G)\setminus N[u]=\{w\}$.
Thus, for some vertex $v \in N(u)$, $v \rightarrow w$, which means that $\{u\}$ is a PD-set.

\vspace{.2cm}{\rm (2)} 
Suppose that $\Delta = n-3$.
Let $w_1,w_2\in V(G)$ such that $V(G)\setminus N[u]=\{w_1,w_2\}$.
Take the set $S=\{u,w_1\}$.
If $w_1w_2\in E(G)$, then  $S$ is a dominating set of $G$, and thus it is a power dominating set.
If $w_1w_2\not\in E(G)$, then $N[S]=V(G)\setminus \{w_2\}$.
Hence, $S$ is a power dominating set since for some vertex $v \in N(u)$, $v \rightarrow w_2$.

Finally, assume that $\Delta = n-4$.
Let $w_1,w_2,w_3\in V(G)$ such that $N=V(G)\setminus N[u]=\{w_1,w_2,w_3\}$.
We distinguish cases.

\vspace{.2cm}\noindent{\bf Case 1:} 
$G[N]$ is not the empty graph $\overline{K}_3$.
Suppose w.l.o.g. that $w_1w_2\in E(G)$.
Take the set $S=\{u,w_1\}$.
If $w_1w_3\in E(G)$, then  $S$ is a dominating set of $G$, and thus it is a power dominating set.
If $w_1w_3\not\in E(G)$, then $N[S]=V(G)\setminus \{w_3\}$.
Hence, $S$ is a power dominating set since either $w_2 \rightarrow w_3$ or, for some vertex $v \in N(u)$, $v \rightarrow w_3$.

\vspace{.2cm}\noindent{\bf Case 2:} 
$G[N]$ is  the empty graph $\overline{K}_3$.
For $i\in \{1,2,3\}$, let $v_i\in N(u)$ be such that $v_iw_i\in E(G)$.
If for every $i\in\{1,2,3\}$, $N(v_i)\cap \{w_1,w_2,w_3\}=\{w_i\}$,  then  $\{u\}$ is a dominating set of $G$, and thus it is a power dominating set.
If for some  $i\in\{1,2,3\}$, $|N(v_i)\cap \{w_1,w_2,w_3\}|\ge 2$, assume w.l.o.g. that $i=1$.
In this case, $S=\{u,v_1\}$ is a power dominating set since 
$V(G)\setminus \{w_3\} \subseteq N[S]$ and either $v_1 \rightarrow w_3$ or  $v_3 \rightarrow w_3$.
\end{proof}

There are graphs with maximum degree $\Delta=n-5$ such that $\gamma_p(G)\ge3$. 
The simplest example is shown in Figure \ref{f}.

\begin{figure}[!h]
	\centerline{\includegraphics[height=3cm]{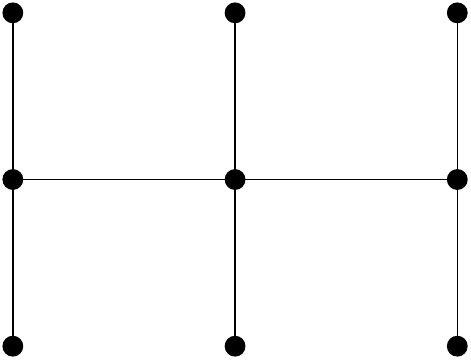}}
	\caption{A graph with maximum degree $\Delta=n-5$ and $\gamma_p(G)=3$}
	\label{f}
\end{figure}


\begin{lemma}
	Let $G$ be a  graph of order $n\ge4$.
	Let $u,w_1,w_2 \in V(G)$ such that $deg(u)=n-3$ and $V(G)= N[u] \cup \{w_1,w_2\}$
	Then, $\{u\}$ is a PD-set if and only if  $w_1$ and $w_2$ are not twins.
\end{lemma}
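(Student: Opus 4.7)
The plan is to work directly from the observation that after the domination step applied to $\{u\}$, the set of black vertices is $N[u]=V(G)\setminus\{w_1,w_2\}$, so the only white vertices are $w_1$ and $w_2$. Every black vertex $v$ has its neighborhood contained in $V(G)\setminus\{v\}$, and its white neighbors form a subset of $\{w_1,w_2\}$; so $v$ can perform a force exactly when $|N(v)\cap\{w_1,w_2\}|=1$. This simple observation will drive both directions.

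For the forward implication I will argue the contrapositive. Suppose $w_1$ and $w_2$ are twins. Then for every vertex $x\in V(G)\setminus\{w_1,w_2\}=N[u]$ we have $x\in N(w_1)\iff x\in N(w_2)$, so $|N(x)\cap\{w_1,w_2\}|\in\{0,2\}$. Hence no black vertex can force, and since $w_1,w_2$ themselves are white and therefore cannot force either, the propagation halts at $N[u]\neq V(G)$. Thus $\{u\}$ is not a PD-set.

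For the converse, assume $w_1$ and $w_2$ are not twins, and split into two cases by whether $w_1w_2\in E(G)$. If $w_1$ and $w_2$ are non-adjacent, then $N(w_1)\neq N(w_2)$, so there is some $x\in V(G)\setminus\{w_1,w_2\}$ lying in the symmetric difference $N(w_1)\triangle N(w_2)$; this $x$ is in $N[u]$ and has exactly one white neighbor, so it forces (say) $w_1$. Once $w_1$ is black, connectedness of $G$ (together with $w_1w_2\notin E(G)$) gives some neighbor $y\in N[u]$ of $w_2$; the only possibly-white neighbor of $y$ was in $\{w_1,w_2\}$, and $w_1$ is now black, so $y\to w_2$. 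If instead $w_1w_2\in E(G)$, then $N[w_1]\neq N[w_2]$, and because both closed neighborhoods contain $w_1$ and $w_2$ the difference must occur at some $x\in V(G)\setminus\{w_1,w_2\}=N[u]$ adjacent to exactly one of $w_1,w_2$, say $w_1$; then $x\to w_1$, after which $w_1$ itself is black and its only remaining white neighbor is $w_2$, so $w_1\to w_2$.

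The main obstacle, such as it is, is just keeping the twin definition straight: the two subcases (adjacent twins requiring $N[w_1]=N[w_2]$ versus non-adjacent twins requiring $N(w_1)=N(w_2)$) must be matched with the two subcases of the converse direction, and one has to verify in each case that the witness of non-twinness genuinely lies in $N[u]$ rather than in $\{w_1,w_2\}$. The closing step—propagating the second white vertex after the first has been forced—is immediate because all other vertices are already black and at most one white vertex remains.
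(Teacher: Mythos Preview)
Your proof is correct and follows essentially the same approach as the paper's: show that twins block all forces from $N[u]$, and that non-twins yield a vertex in $N[u]$ adjacent to exactly one of $w_1,w_2$, after which the second white vertex is forced. Your case split by whether $w_1w_2\in E(G)$ is cleaner and more carefully justified than the paper's split by $\deg(w_1)$, but the underlying idea is the same.
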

\begin{proof}
Suppose first that $w_1$ and $w_2$ are  twins.
In this case, every power dominating set must contain either $w_1$ or $w_2$.
Conversely, assume that $w_1$ and $w_2$ are not twins.
If $N(w_1)=\{w_2\}$, then for some vertex $v\in N(u)$, $v \rightarrow w_1$ and $w_1 \rightarrow w_2$, which means that $\{u\}$ is a PD-set.
If $deg(w_1)\ge 2$, then take  a vertex $v_1\in N(u)$ such that $w_1\in N(v_1)$ and $w_2 \not\in N(v_1)$.
Thus, $v_1 \rightarrow w_1$ and $v_2 \rightarrow w_2$, for any vertex $v_2$ such that $w_2\in N(v_2)$.
\end{proof}

\begin{cor}
Let $G$ be a  graph of order $n\ge4$.
If there exists a vertex $u\in V(G)$ such that $deg(u)=n-3$ and the pair of vertices of $V(G)\setminus N[u]$ are not twins, then $\gamma_p(G)=1$.
\end{cor}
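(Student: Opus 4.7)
The corollary is an almost immediate consequence of the lemma stated directly above it, so my plan is simply to apply that lemma. By hypothesis, there is a vertex $u\in V(G)$ with $\deg(u)=n-3$, and the two vertices of $V(G)\setminus N[u]$ — call them $w_1$ and $w_2$ — are not twins. These are precisely the hypotheses of the preceding lemma, so the lemma yields that $\{u\}$ is a PD-set of $G$.

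Once $\{u\}$ is known to be a PD-set, I would conclude $\gamma_p(G)\le 1$. Combined with the trivial lower bound $\gamma_p(G)\ge 1$ valid for any nonempty graph (the empty set cannot be a power dominating set unless $V(G)=\emptyset$, which is excluded by $n\ge 4$), this forces $\gamma_p(G)=1$.

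There is essentially no obstacle here: the work has already been done in the lemma, and the corollary is a clean restatement of the ``if'' direction of that biconditional. The only thing worth making explicit is the trivial lower bound $\gamma_p(G)\ge 1$, which I would just remark on in one line. Thus the whole proof can be written in two or three sentences: identify $w_1,w_2$ as the two vertices in $V(G)\setminus N[u]$, invoke the lemma to get that $\{u\}$ is a PD-set, and conclude.
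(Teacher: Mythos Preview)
Your proposal is correct and matches the paper's treatment: the paper states the corollary without proof, since it follows immediately from the ``if'' direction of the preceding lemma together with the trivial bound $\gamma_p(G)\ge 1$. There is nothing to add.
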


The converse of this statement is not true.
For example, if we consider the graph $G$ displayed in Figure \ref{G9}, then it is easy to check that $\{w_1\}$ is a PD-set of $G$.

\begin{figure}[!h]
	\centerline{\includegraphics[height=5cm]{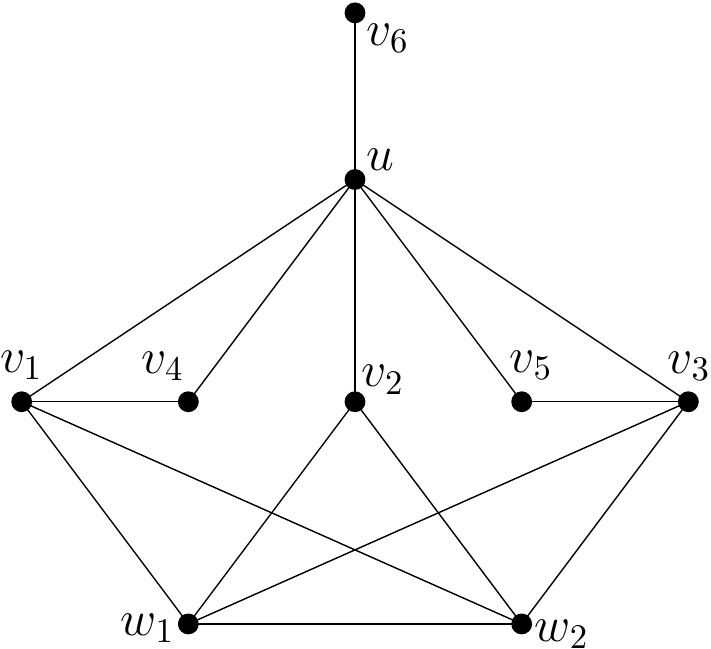}}
	\caption{The list a forcing chains for ${\cal F}_{N[w_1]}$ is: $\{(w_1),(w_2),(v_2,u,v_6),(v_1,v_4),(v_3,v_5)\}$.}
	\label{G9}
\end{figure}

\begin{theorem}
Let $ G $ be a  $ (n-3) $-regular graph of order $n \geq 5$. 
Then, $ \gamma_{p}(G) = 1 $ if and only if there exist an edge $e=uv\in E(G) $ such that $ \mid N[v] \setminus N[u] \mid=1$. Otherwise  $ \gamma_{p}(G) = 2 $.
\end{theorem}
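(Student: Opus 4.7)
The proof will exploit two simple observations specific to the $(n-3)$-regular setting. First, for every vertex $u$ the set $W_u:=V(G)\setminus N[u]$ has exactly two elements, which I denote $w_1,w_2$. Second, for any edge $uv$ we have $N[v]\setminus N[u]\subseteq W_u$, so $|N[v]\setminus N[u]|$ counts the number of $w_i$'s adjacent to $v$; hence the hypothesis ``$|N[v]\setminus N[u]|=1$'' is equivalent to saying $v$ has exactly one neighbor among $\{w_1,w_2\}$.

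For the forward direction of the equivalence, I will assume such an edge $uv$ exists, say with $v$ adjacent to $w_1$ but not $w_2$, and show that $\{u\}$ is a PD-set. After the domination step, the black set is $N[u]$ and the only white vertices are $w_1,w_2$. Since $w_1$ is the unique white neighbor of $v$, the force $v\to w_1$ applies. To then force $w_2$, I split into subcases: if $w_1w_2\in E(G)$, then $w_1$ itself forces $w_2$; otherwise $w_2$ is non-adjacent to $u$, $w_1$, and itself, so its $n-3$ neighbors must fill all of $N(u)$, whence any vertex of $N(u)$ has $w_2$ as its unique white neighbor and performs the force.

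For the reverse direction I argue contrapositively: if no edge satisfies the condition, then for any $u$ each $v\in N(u)$ has $|N(v)\cap\{w_1,w_2\}|\in\{0,2\}$, while $u$ itself has no white neighbor (as $w_1,w_2\notin N(u)$). Consequently, no forcing step is available after the domination step, the closure stops at $N[u]\neq V(G)$, and $\{u\}$ fails to be a PD-set. Since $u$ was arbitrary, $\gamma_p(G)\ge 2$.

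It remains to establish $\gamma_p(G)\le 2$ to pin down the ``otherwise'' case. I will show that $S=\{u,w_1\}$ is a PD-set for any $u$ and any $w_1\in W_u$: after the domination step the only potentially white vertex is $w_2$, and either $w_2\in N(w_1)$ (so $S$ dominates outright) or $w_2$ is non-adjacent to both $u$ and $w_1$, forcing $N(w_2)=N(u)$ by the same degree count as before, after which any neighbor of $u$ forces $w_2$. The main obstacle, which is really just bookkeeping, is tracking the geometry of $w_1,w_2$ relative to $N(u)$ in the two cases $w_1w_2\in E(G)$ vs.\ $w_1w_2\notin E(G)$; the $(n-3)$-regularity makes the second case rigid (forcing $N(w_2)=N(u)$), which is exactly what allows every case to close.
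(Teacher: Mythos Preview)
Your proof is correct and follows essentially the same approach as the paper's: both translate the edge condition into the statement that $v$ has exactly one neighbor among the two vertices of $V(G)\setminus N[u]$, and then run the forcing process from $N[u]$. Your version is in fact more thorough---the paper stops at ``$v\to x$'' without spelling out how the second white vertex is subsequently forced, and it invokes the earlier proposition on graphs with $\Delta\ge n-4$ for the bound $\gamma_p(G)\le 2$ rather than reproving it directly as you do.
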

\begin{proof}
According to part 2 of proposition \ref{gpn-3} $ \gamma_{p}(G) \leq 2 $.
If $n=5$, then $G\cong C_5$, and the equivalence  is obvious.
Suppose thus that $n\ge6$.
\newline ($\Rightarrow$):
Let $S=\{u\}$ be a $\gamma_p$-set of $G$.
Let $W=V(G)\setminus N[u]=\{x,y\}$.
As $S$ is a $\gamma_p$-set, there must exist a vertex $v\in N(u)$ such that $|N(v)\cap W|=1$.
Hence,   there exist a unique vertex $w\in N(u)\setminus \{v\}$ such that $w\not\in N(v)$, as $\deg(v)=n-3$.
\newline ($\Leftarrow$):
Take the sets $S=\{u\}$ and $W=V(G)\setminus N[u]=\{x,y\}$.
As $ \mid N[v] \setminus N[u] \mid=1$ and $deg(u)=deg(v)=n-3$, $|N(v)\cap W|=1$.
Hence, if for example $N(v)\cap W=\{x\}$, then $v \rightarrow x$, which means that $S$ is a $\gamma_p$-set of $G$.
\end{proof}

Here we define two families of graphs;

$\bullet  $ Let three vertices $ u $ and $ w $ and $ z $ induce a path in $ G $(w is in the middle vertex) such that $ u $ or $ z $ is the difference with $ w $ in exactly one neighbor, i.e.,
\[ \exists x \in \{u,z\} ~~ s.t ~~ \mid N[w] \setminus N[x] \mid =1 \]
This induce subgraph is called $ \mathbb{P}_{3} $.

 $\bullet  $ For another family, suppose three vertices $ u $ and $ w $ and $ z $ induce a cycle in $ G $ such that at least two of them are the difference with $ w $ in exactly one neighbor, i.e.,
\[ \exists x,y \in \{u,w,z\} ~~ s.t ~~ \mid N[x] \setminus N[y] \mid =1 \]
This induce subgraph is called $ \mathbb{C}_{3} $.

\begin{theorem}
Let $ G $ be a  $ (n-4) $-regular graph of order $n \geq 6 $. 
$ \gamma_{p}(G) = 1 $ if and only if $ G $ contains $ \mathbb{P}_{3} $ or $ \mathbb{C}_{3} $ as a induce subgraph. 
\end{theorem}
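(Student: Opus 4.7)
The plan is to treat the two implications separately, with the easier ``only if'' direction extracted from a single inspection of the first propagation step.

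For the ``only if'' direction, I would take a PD-set $\{u\}$ and set $W=V(G)\setminus N[u]$, which has exactly three elements by $(n-4)$-regularity. Since $\{u\}$ power-dominates $G$, the propagation must perform at least one force; so some $v\in N(u)$ satisfies $|N(v)\cap W|=1$, say $N(v)\cap W=\{a\}$. Because $a\in W$ is not adjacent to $u$, the triple $u-v-a$ is an induced path, and a direct check (using $v\in N(u)$) gives $|N[v]\setminus N[u]|=|N(v)\cap W|=1$. Thus $(u,v,a)$ realises an induced $\mathbb{P}_{3}$ (with $w=v$ and $x=u$), which finishes this direction.

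For the ``if'' direction, suppose $G$ contains $\mathbb{P}_{3}$ realised by an induced path $u-w-z$ with $|N[w]\setminus N[u]|=1$, and take the PD-set $\{u\}$. The hypothesis forces $N(w)\cap W=\{z\}$, so the first force $w\to z$ occurs. It then remains to force the other two whites, say $b$ and $c$. I would exploit the $(n-4)$-regularity: each of $b$ and $c$ has degree $n-4$ and is non-adjacent to $u$, so essentially all of their edges go into $N[u]\cup\{z\}$, and a counting argument on incidences between $\{b,c\}$ and $N[u]\cup\{z\}$ aims to exhibit a vertex with exactly one of $\{b,c\}$ as neighbour, allowing a second force; then only one white vertex remains, and any of its $n-4\ge 2$ (now black) neighbours forces it. The $\mathbb{C}_{3}$ case is analogous and more symmetric: one takes the triangle $u,w,z$ and uses the ``central'' vertex appearing in both differ-by-$1$ pairs as the seed, so that its two triangle partners each trigger an initial force and the remaining lone white vertex is handled as above.

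The main obstacle is the second force in the $\mathbb{P}_{3}$ case, because after $w\to z$ it is a priori possible that the remaining whites $b,c$ form a twin pair with respect to $N[u]\cup\{z\}$, in which case no vertex has exactly one of them as neighbour and propagation stalls. To close this gap I would either (i) use the $(n-4)$-regularity together with the specific adjacency pattern around $w$, $z$, and the unique non-neighbour of $w$ in $N(u)$ to rule out this twin configuration, or (ii) change the seed of the PD-set—for instance to $z$ (traversing the $\mathbb{P}_{3}$ from the other end), or to the other endpoint of the path, or invoking the $\mathbb{C}_{3}$ alternative hypothesis instead—and rerun the propagation from there. Reconciling these sub-cases and pinning down exactly which of $\mathbb{P}_{3}$ or $\mathbb{C}_{3}$ has to be invoked in each adjacency pattern is the delicate technical heart of the argument.
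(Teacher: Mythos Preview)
Your ``only if'' argument is correct and notably cleaner than the paper's. The paper fixes a PD-set $\{v\}$, sets $\{u,w,z\}=V(G)\setminus N[v]$, and performs a lengthy case analysis on $G[\{u,w,z\}]$ together with the pattern of non-adjacencies between $\{u,w,z\}$ and $N(v)$, eventually concluding that the triple $\{u,w,z\}$ itself realises a $\mathbb{P}_3$ or $\mathbb{C}_3$. You instead read off a $\mathbb{P}_3$ directly from the first force: the seed, the forcing neighbour, and the forced vertex always form an induced path satisfying the neighbourhood condition. This is a genuinely different and shorter route, and it shows incidentally that $\mathbb{C}_3$ is never needed for this implication.

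For the ``if'' direction the paper writes only ``It is clear'', so there is no argument to compare against. More to the point, the obstruction you isolated is not a removable technicality: it actually occurs. Take $n=8$ and let the complement $\overline{G}$ be the $3$-regular graph on $\{u,w,z,b,c,d,e,f\}$ with edge set
\[
\{uz,\ ub,\ uc,\ wb,\ wc,\ wd,\ bc,\ ze,\ zf,\ de,\ df,\ ef\}.
\]
Then $G$ is connected and $(n-4)$-regular, and $u,w,z$ induce a path in $G$ with $N_G[w]\setminus N_G[u]=\{z\}$, so $G$ contains a $\mathbb{P}_3$ (indeed $\{u,w,e\}$ is a $\mathbb{C}_3$ as well). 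With seed $\{u\}$ one gets $w\to z$, after which the remaining white vertices $b,c$ satisfy $N_G(b)=N_G(c)=\{z,d,e,f\}$ and propagation halts---precisely your twin scenario. A direct check of all eight singleton seeds shows that none power dominates, so $\gamma_p(G)=2$. Thus the ``if'' direction, as the theorem is stated, is false; what the paper's forward analysis really supports is a characterisation in terms of the triple $V(G)\setminus N[v]$ for some vertex $v$, not in terms of an arbitrary $\mathbb{P}_3$ or $\mathbb{C}_3$ somewhere in $G$.
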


\begin{proof}
($\Rightarrow$):Let $S=\{v\}$ be a $\gamma_p$-set of $G$ and $V(G)\setminus N[v]=\{u,w,z\}$. Now we study four states;

\begin{itemize}
\item There is a path between $ u, w $ and $ z $.
\item only two of $\lbrace u, w, z \rbrace$ are adjacent.
\item $ (u, w) $, $ (w, z) $ and $ (z, u) \in E(G) $. 
\item None of them is adjacent.
\end{itemize}

In the First state, Without losing generality, let $ (u, w)$ and $(w, z) \in E(G) $. $ w $ is not adjacent to two neighbors of $ v $, which we call $ w_{1} $ and $ w_{2} $(because $ G $ is a $ (n-4)- $regular graph and every vertex is not adjacent to 3 vertices). $ u $ and $ z $ also is not adjacent to one neighbor of $ v $ called $ u_{1} $ and $ z_{1} $. Now we have five cases;\\

\textbf{Case 1:} $ u_{1}=z_{1} $.
 So $ u $ and $ z $ are twin, then should $ u $ or $ z \in S $. therefore  $ \gamma_{p}(G) > 1 $ and this is a contradiction.

\textbf{Case 2:} $ z_{1}=w_{1} $ (or $ u_{1}=w_{1} $ ).
So $ z_1 \rightarrow u $ and next $ u \rightarrow w $. Then $ \gamma_p(G)=1 $ in this case.

\textbf{Case 3:} $ z_{1}=w_{1} $ and $ u_{1}=w_{2} $.
This case is like the previous case.

\textbf{Case 4:} None of them is not equal.
 In this case, each of neighbors of $ v $ is adjacent to at least two of $ \lbrace u, w, z \rbrace $. So we can not continue. Then $ \gamma_{p}(G) > 1 $ and so this case is not accepted.

\textbf{Case 5:} $ z_{1}=u_{1}=w_{1} $.
 $ z_{1}=u_{1}=w_{1} $ is not adjacent to any one of $ \lbrace u, w, z \rbrace $ and other neighbors of $ v $ are  adjacent to at least two of $ \lbrace u, w, z \rbrace $. So we can not continue and then $ \gamma_{p}(G) > 1 $.

Thus only the second and third cases in the first state are acceptable that in both of them, $ \{ u, w, z \} $ induces $ \mathbb{P}_{3} $ in $ G $.

In the second state, Without losing generality, let $ (u, w) \in E(G) $. So $ N(z)=N(v) $ but $ u $ and $ w $ are not adjacent to one neighbor of $ v $ called $ u_{1} $ and $ w_{1} $. If $ u_{1}=w_{1} $, then $ u $ and $ w $ are twin and we can not continue. Now if $ u_{1} \neq w_{1} $, then In this case, each of neighbors of $ v $ are  adjacent to at least two of $ \lbrace u, w, z \rbrace $. So we can not continue propagation. So in both cases, $ \gamma_{p}(G) > 1 $.

In the third state, $ u $, $ w $ and $ z $ are not adjacent to two neighbors of $ v $, which we call $ u_{1} $, $ u_{2} $, $ w_{1} $, $ w_{2} $, $ z_{1} $ and $ z_{2} $ respectively. Now we have eight cases;\\

\textbf{Case 1:} None of them is not equal.

\textbf{Case 2:} $ w_{1}=u_{1}=z_{1} $ and $ w_{2}=u_{2}=z_{2} $.

\textbf{Case 3:} $ w_{1}=u_{1}=z_{1} $ and $ w_{2}=u_{2} $.

\textbf{Case 4:} $ w_{1}=u_{1}=z_{1} $.

\textbf{Case 5:} $ w_{1}=u_{1} $.

\textbf{Case 6:} $ w_{1}=u_{1} $ and $ u_{2}=z_{2} $.

\textbf{Case 7:} $ w_{1}=u_{1} $ and $ w_{2}=u_{2} $.

\textbf{Case 8:} $ w_{1}=u_{1} $ and $ w_{2}=z_{2} $ and $ z_{1}=u_{2} $.

Like argument of the second state, it can easily be seen that there are contradiction in the second, third, fourth and seventh cases. Also $ \{u, w, z\} $ induses $ \mathbb{C}_{3} $ in the fifth and sixth and eighth cases.

In the fourth state, In this case,  $ u, w, z $ are twin pairwise. Then $ \gamma_{p}(G) > 1 $.

 ($\Leftarrow$):It is clear.
\end{proof}

An example for the fourth state is $ K_{\underbrace{4, 4, \cdots , 4}_{\frac{n}{4}}} $ that  $ \gamma_{p}(K_{\underbrace{4, 4, \cdots , 4}_{\frac{n}{4}}})=2 $.

The following corollary is obtained directly from the above theorem.

\begin{cor}
Let $ G $ be a  $ (n-4) $-regular graph of order $n \geq 6 $. 
$ \gamma_{p}(G) = 2 $ if and only if $ G $ contains neither $ \mathbb{P}_{3} $ nor $ \mathbb{C}_{3} $ as a induce subgraph. 
\end{cor}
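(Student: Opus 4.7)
The plan is to derive the corollary directly from the preceding theorem together with the upper bound supplied by Proposition \ref{gpn-3}. Since $G$ is $(n-4)$-regular, its maximum degree satisfies $\Delta(G)=n-4$, so part (2) of Proposition \ref{gpn-3} yields $1\le \gamma_p(G)\le 2$. Thus $\gamma_p(G)\in\{1,2\}$, and the value is determined by whether or not $\gamma_p(G)=1$.

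Next I would invoke the preceding theorem, which tells us that $\gamma_p(G)=1$ holds if and only if $G$ contains an induced $\mathbb{P}_3$ or an induced $\mathbb{C}_3$. Taking the contrapositive on the value of $\gamma_p$ then reads: $\gamma_p(G)\neq 1$ if and only if $G$ contains neither $\mathbb{P}_3$ nor $\mathbb{C}_3$ as an induced subgraph. Combined with the two-valued restriction from the first step, we conclude that $\gamma_p(G)=2$ if and only if $G$ contains neither $\mathbb{P}_3$ nor $\mathbb{C}_3$ as an induced subgraph.

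There is essentially no obstacle here: all the content is packaged inside the preceding theorem, and the corollary is a one-line logical rewriting once the upper bound $\gamma_p(G)\le 2$ is in hand. The only thing to be careful about is that Proposition \ref{gpn-3} is formulated for general graphs with $n-4\le\Delta\le n-3$, so one should explicitly note that $(n-4)$-regularity implies $\Delta=n-4\ge n-4$ (trivially true for $n\ge 6$) in order to legitimately apply it. After that observation the corollary is immediate.
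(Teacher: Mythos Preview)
Your argument is correct and matches the paper's approach exactly: the paper simply states that the corollary ``is obtained directly from the above theorem,'' which is precisely the contrapositive reasoning you spell out, combined with the bound $\gamma_p(G)\le 2$ from Proposition~\ref{gpn-3}.
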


We know that '' in what condition is $ \gamma_p $ of a graph equal to 1 '' is a open problem. In the next sections, we obtain it for some families of graphs.


\section{Mycieleskian graphs}\label{lp}

\begin{defi}[\cite{va11}]\label{MG}
Let $G = (V , E)$ be a graph with the vertex set $V = \{ v_i \vert 1 \leq i \leq n \}$. The \emph{Mycieleskian graph} $\mu(G)$ of a graph $G$ is a graph with the vertex set $V \cup U \cup \{ w \}$ such that $U = \{ u_i \vert 1 \leq i \leq n \}$, and the edge set $E \cup \{u_i v_j \vert v_i v_j \in E(G) \} \cup \{u_i w | u_i \in U \}$.
\end{defi}

\begin{theorem}\cite{va11}
If $G$ be a connected graph, then $\gamma_p ( \mu(G)) \in \{ 1, \gamma_G, \gamma_p(G) + 1 \} $.
\end{theorem}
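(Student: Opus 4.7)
My plan is to establish, for each of the three quantities on the right-hand side, a concrete power dominating set of $\mu(G)$ of the corresponding cardinality, and then check that these three upper bounds capture every possibility. The most robust bound is $\gamma_p(\mu(G)) \le \gamma_p(G)+1$, so I would begin there.

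\textbf{Step 1 (bound $\gamma_p(G)+1$).} Let $D$ be a minimum PD-set of $G$ and set $S = D \cup \{w\}$. I would compute
\[
N_{\mu(G)}[S] \;=\; D \;\cup\; N_G(D) \;\cup\; \{u_j : v_j \in N_G(D)\} \;\cup\; U \;\cup\; \{w\},
\]
so in particular all of $U \cup \{w\}$ is already black. The key observation is that for every black vertex $v_i \in V$, its white neighbors in $\mu(G)$ coincide with its white neighbors in $G$, because all shadow vertices $u_j$ are already black and therefore do not obstruct the forcing rule. Consequently every propagation step performed in $G$ starting from $N_G[D]$ can be carried out identically in $\mu(G)$, which gives $V \subseteq cl(N_{\mu(G)}[S])$, so $S$ is a PD-set of $\mu(G)$.

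\textbf{Step 2 (bound $\gamma(G)$).} Let $D$ be a minimum dominating set of $G$; I would try to show that $D$ alone power-dominates $\mu(G)$. Since $D$ dominates $G$, all of $V$ lies in $N_{\mu(G)}[D]$; the shadow vertices $u_j$ for which $v_j \in N_G(D)$ are automatically caught, so the only potential white vertices are $w$ together with those $u_j$ such that $v_j \in D$ has no neighbor inside $D$. Now I would use propagation: with $V$ wholly black, any $v_k \in V$ whose only white neighbor in $\mu(G)$ is a shadow vertex $u_j$ forces $u_j$. Iterating, once $U\setminus\{u_i\}$ is black for a single remaining $u_i$, the vertex $w$ forces $u_i$. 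I expect this argument to require either a mild hypothesis on $D$ (for example, $D$ contains an edge, or $D$ has no isolated vertex in $G[D]$) or a small modification (e.g.\ swap one $v \in D$ for a neighbour), which is the step I anticipate the most delicate bookkeeping.

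\textbf{Step 3 (value $1$).} I would look for graphs for which the singleton $\{w\}$ is already a PD-set. Since $N_{\mu(G)}[\{w\}] = \{w\} \cup U$, the white set is all of $V$; a vertex $v_i$ gets forced precisely when some vertex of $\mu(G)$ has $v_i$ as its unique white neighbour, and because $U$ is already black this reduces to asking for some $v_k$ (acting through $u_k$) to have exactly one white neighbour in $G$. This is a ``seedless'' forcing on $G$ that starts whenever $G$ has a leaf, so I would identify the graphs $G$ for which this fan-out covers $V$, concluding $\gamma_p(\mu(G))=1$ in that case.

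\textbf{Step 4 (trichotomy).} Combining the three constructions shows $\gamma_p(\mu(G)) \le \min\{1\ \text{(when it applies)},\,\gamma(G),\,\gamma_p(G)+1\}$. To conclude the statement I would argue a matching lower bound: outside the favorable case of Step 3 the vertex $w$ or its equivalent must always be forced (so one extra seed is needed, matching $\gamma_p(G)+1$), whereas if a dominating set of $G$ already handles all of $\mu(G)$ via Step 2 we land on $\gamma(G)$. The main obstacle is really Step 2 — understanding which dominating sets actually propagate through the shadow layer $U$ up to $w$ — and the auxiliary task in Step 4 of showing that no intermediate value (like $\gamma_p(G)+1$ strictly dropped by $1$ yet still exceeding $\gamma(G)$) can arise.
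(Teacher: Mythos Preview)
The paper does not contain a proof of this theorem: it is quoted verbatim from \cite{va11} and is immediately followed by the next cited result, with no argument supplied. There is therefore nothing in the present paper to compare your proposal against.

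That said, your sketch has a structural gap worth flagging. Steps~1--3 only produce \emph{upper bounds} (and Step~2 you already recognise as incomplete), but the assertion $\gamma_p(\mu(G))\in\{1,\gamma(G),\gamma_p(G)+1\}$ is a membership statement, not an inequality: you must also rule out every value strictly between these three numbers. Your Step~4 gestures at this but does not actually do it; for instance, nothing you wrote excludes the possibility that $\gamma_p(\mu(G))=2$ while $\gamma(G)=4$ and $\gamma_p(G)+1=3$. A genuine proof of the trichotomy requires a case analysis of what an optimal PD-set of $\mu(G)$ can look like (how many vertices it places in $V$, in $U$, and whether it uses $w$), together with matching lower bounds in each case --- the upper-bound constructions alone cannot close the argument.
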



\begin{theorem}\cite{va11}\label{MG=1}
Let $ G $ be a connected graph. If $ G $ has one universal vertex, then $ \gamma_p(\mu(G)) =1 $.
\end{theorem}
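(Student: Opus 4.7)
The plan is to exhibit a single vertex that power-dominates $\mu(G)$, namely the universal vertex itself. Let $v_1 \in V(G)$ be a universal vertex of $G$, so $N_G(v_1) = V \setminus \{v_1\}$, and set $S = \{v_1\}$. The argument breaks into computing the initial closed neighborhood and then performing two forces.

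First, I would describe $N_{\mu(G)}[v_1]$ explicitly. By Definition \ref{MG}, the neighbors of $v_1$ in $\mu(G)$ are its $G$-neighbors, namely $V \setminus \{v_1\}$, together with every $u_j$ such that $v_1 v_j \in E(G)$; by universality this is every $u_j$ with $j \neq 1$. Therefore
\[
N_{\mu(G)}[v_1] \;=\; V \;\cup\; (U \setminus \{u_1\}),
\]
so after the domination step only $u_1$ and $w$ remain white.

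Next I would force $u_1$. Pick any $v_j$ with $j \neq 1$. Its neighbors in $\mu(G)$ sit in $V$ (via $N_G(v_j)$, all black) and in $U$ (the set $\{u_i : v_i \in N_G(v_j)\}$). Since $v_1 \in N_G(v_j)$, the vertex $u_1$ is one of these, and every other $u_i$ in this set has $i \neq 1$ and is already black. Hence $u_1$ is the unique white neighbor of $v_j$, giving the force $v_j \rightarrow u_1$. Finally, to force $w$, pick any $u_j$ with $j \neq 1$: its neighbors are $w$ together with $N_G(v_j) \subseteq V$, and every vertex of $V$ is black, so $w$ is the unique white neighbor of $u_j$ and $u_j \rightarrow w$.

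This closes the propagation with $cl(N[v_1]) = V(\mu(G))$, so $\{v_1\}$ is a PD-set and $\gamma_p(\mu(G)) \le 1$; combined with $\gamma_p(\mu(G)) \ge 1$, equality follows. I do not anticipate a serious obstacle: the only care needed is to check that the ``missing'' shadow $u_1$ is indeed reachable from some $v_j$ (rather than being forced via $U$–$U$ adjacencies, which do not exist in the Mycielskian), and universality of $v_1$ is precisely what guarantees this.
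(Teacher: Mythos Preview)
Your proof is correct. The paper does not actually prove this theorem; it is quoted from \cite{va11} without argument, so there is no in-paper proof to compare against. Your choice of $S=\{v_1\}$ and the two forces $v_j \rightarrow u_1$ and $u_j \rightarrow w$ are exactly right, and your verification that $u_1$ is the unique white neighbor of $v_j$ (using that $v_1 \in N_G(v_j)$ by universality) is the crux.

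Two cosmetic remarks: the restriction $j \neq 1$ in the force $u_j \rightarrow w$ is unnecessary once $u_1$ has been colored black, and in fact the two forces are independent (either can be performed first), since the neighborhood of any $u_j$ in $\mu(G)$ lies entirely in $V \cup \{w\}$ and all of $V$ is already black after the domination step. Neither point affects correctness.
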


Notice that according to theorem \ref{PP=1}, inverse of theorem \ref{MG=1} is not true.
Next, we calculate power domination number of mycieleskian of the graphs in the table 1.
In the first, you can see the direct result of the above theorem.

\begin{cor}
$ \gamma_p(\mu(K_n))=\gamma_p(\mu(K_{1,n}))=\gamma_p(\mu(W_n))=1 $
\end{cor}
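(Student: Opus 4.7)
The plan is to invoke Theorem \ref{MG=1} directly, since that theorem guarantees $\gamma_p(\mu(G))=1$ whenever $G$ contains at least one universal vertex. So the entire task reduces to checking that each of the three graphs $K_n$, $K_{1,n}$, and $W_n$ possesses a universal vertex.

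First I would observe that in $K_n$ every vertex is adjacent to every other vertex, so any vertex is universal. Next, in the star $K_{1,n}$ the central vertex is adjacent to each of the $n$ leaves, hence it is universal. Finally, in the wheel $W_n$ the hub is adjacent to every vertex of the rim cycle, so the hub is universal. With a universal vertex exhibited in each case, Theorem \ref{MG=1} yields $\gamma_p(\mu(K_n)) = \gamma_p(\mu(K_{1,n})) = \gamma_p(\mu(W_n)) = 1$.

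There is essentially no obstacle here; the only subtlety is a convention check on the definition of $W_n$ (whether $n$ denotes the total number of vertices or the length of the rim cycle), but in either convention the hub is universal, so the conclusion is unaffected. The proof is therefore a one-line application of the preceding theorem.
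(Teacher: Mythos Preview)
Your proposal is correct and matches the paper's approach exactly: the paper presents this corollary as ``the direct result of the above theorem,'' namely Theorem~\ref{MG=1}, without further argument. Your explicit identification of a universal vertex in each of $K_n$, $K_{1,n}$, and $W_n$ is precisely the intended justification.
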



\begin{theorem}\label{PP=1}
$\gamma_p(\mu(P_n))=1 $.
\end{theorem}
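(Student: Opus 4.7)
The plan is to show that $S = \{v_2\}$ is a power dominating set of $\mu(P_n)$, so that together with the trivial bound $\gamma_p(\mu(P_n)) \ge 1$ we obtain the claimed equality. From Definition~\ref{MG} one reads off the relevant neighborhoods in $\mu(P_n)$: for an interior index $i$, $N(u_i) = \{v_{i-1}, v_{i+1}, w\}$ and $N(v_i) = \{v_{i-1}, v_{i+1}, u_{i-1}, u_{i+1}\}$, with the obvious truncations at the two path endpoints. In particular, for $n \ge 3$, $N[v_2] = \{v_1, v_2, v_3, u_1, u_3\}$.

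The small cases $n \in \{2, 3\}$ are immediate by inspection: starting from $N[v_2]$, the degree-two vertex $u_1$ forces $w$ and the degree-two vertex $v_1$ forces $u_2$, and together with the initial closed neighborhood these cover all of $V(\mu(P_n))$. For $n \ge 4$, I would trace the propagation in two phases. The initial phase performs $u_1 \to w$, $v_1 \to u_2$, then $u_3 \to v_4$ (its three neighbors $v_2, v_4, w$ have only $v_4$ white), and finally $v_3 \to u_4$ (its four neighbors $v_2, v_4, u_2, u_4$ have only $u_4$ white). After this initial phase, the black set equals $B_4 := \{v_1, \ldots, v_4, u_1, \ldots, u_4, w\}$.

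The iterative phase is governed by the following invariant: for each $k$ with $4 \le k \le n-1$, once $B_k := \{v_1, \ldots, v_k, u_1, \ldots, u_k, w\}$ is entirely black, the forces $u_k \to v_{k+1}$ (its three neighbors $v_{k-1}, v_{k+1}, w$ have only $v_{k+1}$ white) and then $v_k \to u_{k+1}$ (its four neighbors $v_{k-1}, v_{k+1}, u_{k-1}, u_{k+1}$ have only $u_{k+1}$ white) extend the black set to $B_{k+1}$. Running this from $k = 4$ up to $k = n-1$ yields $B_n = V(\mu(P_n))$, so $S$ is indeed a power dominating set. The only delicate point I anticipate is making sure the induction continues to use the interior-vertex neighborhood formulas at the boundary $k = n-1$, which needs $n - 1 \ge 2$; this is taken care of by handling the cases $n \in \{2, 3\}$ separately above. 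Otherwise the argument is a routine propagation along alternating $u$--$v$ forces, with $w$ and the early $u_2$ being produced by the two degree-two vertices $u_1$ and $v_1$ that sit inside $N[v_2]$.
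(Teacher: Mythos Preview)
Your proof is correct and follows essentially the same approach as the paper: both take $S=\{v_2\}$, observe that $N[v_2]=\{v_1,v_2,v_3,u_1,u_3\}$, obtain $w$ and $u_2$ via the degree-two vertices $u_1$ and $v_1$, and then propagate outward in the order $v_4,u_4,v_5,u_5,\ldots,v_n,u_n$. Your version is simply more explicit about the individual forces and the boundary cases $n\in\{2,3\}$, which the paper leaves implicit.
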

\begin{proof}
Let $ G=P_n $ and $ V(G)= \{ v_1, \cdots , v_n \} $. We claim that $ S=\{v_2\} $ is a PD-set for $ M(G) $ because $ \{ v_1, v_3, u_1, u_3 \} $ are monitored in d.s(domination step) and $ \{ w, u_2 \} $ are monitored in p.s(propagation steps). Also $ \{ v_4 \}, \{ u_4 \}, \{ v_5 \}, \{ u_5 \}, \cdots, \{ v_n \}, \{ u_n \} $ are monitored in next propagation steps.
\end{proof}


\begin{theorem}
If $ G=C_n $, and
\begin{equation*}
\gamma_p(\mu(G))=
\begin{cases}
1 & n=3 \\
2  & n \geq 4.
\end{cases}
\end{equation*}
\end{theorem}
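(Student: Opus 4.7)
The plan is to split into two cases and in each case produce the construction and the obstruction independently.

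For $n=3$, the cycle $C_3$ is the complete graph $K_3$, every vertex of which is universal. So Theorem \ref{MG=1} applies directly and gives $\gamma_p(\mu(C_3))=1$.

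For $n\ge 4$, I would first prove the upper bound $\gamma_p(\mu(C_n))\le 2$ by exhibiting an explicit PD-set of size $2$, and I would choose $S=\{v_1,w\}$. A short computation using Definition \ref{MG} gives
\[
N[S]=\{v_1,v_2,v_n,w\}\cup\{u_1,u_2,\dots,u_n\},
\]
so the only vertices still white are $v_3,v_4,\dots,v_{n-1}$. Because every $u_i$ has exactly the three neighbors $v_{i-1},v_{i+1},w$ (indices mod $n$), as soon as we have $v_1,w$ black the vertex $u_2$ has a unique white neighbor $v_3$ and so $u_2\to v_3$; symmetrically $u_n\to v_{n-1}$. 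Inductively $u_i\to v_{i+1}$ (for $i=2,3,\dots$) and $u_{n-j+1}\to v_{n-j}$ (for $j=1,2,\dots$) sweep the two arcs inward until all of $V$ is black. Thus $S$ is a power dominating set.

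For the lower bound $\gamma_p(\mu(C_n))\ge 2$, I would argue that no single vertex $x\in V(\mu(C_n))$ forces, by checking the three orbits under the natural symmetry. If $x=w$ then $N[w]=\{w\}\cup U$ is black and each $u_i$ has two white neighbors $v_{i-1},v_{i+1}$, so the propagation halts immediately. If $x=v_i$ (say $v_1$), then $N[v_1]=\{v_1,v_2,v_n,u_2,u_n\}$; each of $u_2$ and $u_n$ has the two white neighbors $w$ and $v_3$ (respectively $w$ and $v_{n-1}$), while $v_2$ and $v_n$ each have three white neighbors (using $n\ge 4$, which guarantees $v_3\ne v_n$ and $v_{n-1}\ne v_1$), and $v_1$ has none to force. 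Finally if $x=u_i$ (say $u_1$), then $N[u_1]=\{u_1,v_2,v_n,w\}$, and again one checks that every black vertex has at least two white neighbors (here $n\ge 4$ is needed so that $v_2$ and $v_n$ each still have three white neighbors, and $w$ has $n-1\ge 3$ white neighbors among $U$). In every case no force is possible, so the domination step closure already equals $N[x]\ne V(\mu(C_n))$, contradicting the hypothesis that $\{x\}$ is a PD-set.

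The main obstacle is purely bookkeeping in the lower-bound case analysis: one must verify, for each of the three vertex types, that every vertex of $N[x]$ has at least two neighbors outside $N[x]$, and that this uses $n\ge 4$ (for $n=3$ the checks fail, consistent with Case 1). The key structural input is that in $\mu(C_n)$ each $u_i$ has degree exactly $3$ with neighborhood $\{v_{i-1},v_{i+1},w\}$, which is what makes the upper-bound propagation so clean and also pins down the white/black count in the lower bound.
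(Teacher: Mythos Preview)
Your proof is correct and follows essentially the same approach as the paper: the case $n=3$ via Theorem~\ref{MG=1}, the upper bound for $n\ge 4$ via the explicit PD-set $\{v_1,w\}$, and the lower bound via the same three-case analysis over the vertex types $w$, $v_i$, $u_i$. Your write-up is in fact more detailed than the paper's, since you spell out the propagation chain for the upper bound and explicitly count the white neighbors in each lower-bound case.
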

\begin{proof}
Let $ G=C_n $ and $ V(G)=\{ v_1, \cdots, v_n \} $. For $ n=3 $, according to theorem \ref{MG=1} $ \gamma_p(\mu(G))=1 $. For $ n \geq 4 $, $ S= \{ v_1, w \} $ is a PD-set for $ \mu(G) $ so $ \gamma_p(\mu(G)) \leq 2 $. Now we prove $ \gamma_p(\mu(G)) \neq 1 $. If $ \gamma_p(\mu(G))= 1 $, then we have three state;\\
\textbf{State 1:} $ S= \{ v_i \} $.\\
\textbf{State 2:} $ S= \{ u_i \} $.\\
\textbf{State 3:} $ S= \{ w \} $.\\
In state 1, $ \{ v_{i-1}, v_{i+1}, u_{i-1}, u_{i+1} \} $ are monitored in d.s but p.s can not occur because all of $ \{ v_{i-1}, v_{i+1}, u_{i-1}, u_{i+1} \} $ are adjacent to at least two vertices of $ V(M(G)) \setminus \{ v_{i-1}, v_{i+1}, u_{i-1}, u_{i+1} \} $.\\
In state 2, $ \{ w, v_{i-1}, v_{i+1} \} $ are monitored in d.s and again like state 1, p.s can not occur.\\
In state 3, $ \{ u_1, \cdots, u_n \} $ are monitored in d.s but p.s can not occur.\\
 So $ \gamma_p(\mu(G)) \neq 1 $ and then $ \gamma_p(\mu(G))= 2 $.
\end{proof}


\begin{theorem}
$ \gamma_p(\mu(K_{h,n-h}))=2 $, for $ h \geq 2, n \geq 4 $.
\end{theorem}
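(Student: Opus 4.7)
The plan is to prove $\gamma_p(\mu(K_{h,n-h}))\le 2$ and $\gamma_p(\mu(K_{h,n-h}))\ge 2$ separately, exploiting the rigid neighborhood structure of the Mycieleskian graph of a complete bipartite graph. Throughout, let $A=\{a_1,\dots,a_h\}$ and $B=\{b_1,\dots,b_{n-h}\}$ denote the parts of $K_{h,n-h}$, let $A'$ and $B'$ be their Mycieleskian copies, and let $w$ be the apex. Unwinding Definition \ref{MG} gives the neighborhoods $N(a_i)=B\cup B'$, $N(b_j)=A\cup A'$, $N(a_i')=B\cup\{w\}$, $N(b_j')=A\cup\{w\}$, and $N(w)=A'\cup B'$, each determined entirely by partition membership.

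For the upper bound I take $S=\{a_1,b_1\}$ and observe that $N[S]=\{a_1,b_1\}\cup A\cup B\cup A'\cup B'=V\cup U$, so only $w$ is white after the domination step. Since $a_1'$ is black and $N(a_1')\setminus\{w\}=B\subseteq N[S]$, the force $a_1'\to w$ completes the closure, showing that $S$ is a PD-set of size $2$.

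For the lower bound I must rule out each singleton $\{x\}$ as a power dominating set. Under the $A\leftrightarrow B$ parallel, the five vertex types reduce to three configurations to analyze: $x=a_1$ (mirroring $x=b_1$), $x=a_1'$ (mirroring $x=b_1'$), and $x=w$. In each configuration I compute $N[x]$, identify the resulting white set $W$, and verify that every black vertex already has at least two white neighbors, so the color-change rule cannot fire and propagation halts with $W$ still nonempty. For instance, when $x=a_1$ the white set is $W=(A\setminus\{a_1\})\cup A'\cup\{w\}$; each $b_j$ then has $(h-1)+h=2h-1$ white neighbors, each $b_j'$ has $(h-1)+1=h$ white neighbors, and $a_1$ itself has none. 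The remaining configurations produce the analogous counts $2(n-h)-1$, $n-h$, $n-1$, $h$, and these are all bounded below by $2$.

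The only real ``obstacle'' is systematic bookkeeping across the five vertex types, and the hypothesis is used precisely here: $h\ge 2$ together with $n-h\ge 2$ (which follows from $n\ge 4$ under the standard convention $h\le n-h$) is exactly what keeps each of the white-neighbor counts above at $\ge 2$, so no black vertex is ever one step away from forcing. No subtle propagation argument is needed, because once the color-change rule stalls at the very first step, it stalls forever, and the white set is then known explicitly to be nonempty.
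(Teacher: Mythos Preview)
Your proof is correct and follows essentially the same approach as the paper's: exhibit $\{a_1,b_1\}$ as a PD-set for the upper bound, then rule out every singleton by checking that propagation stalls immediately after the domination step. Your treatment is more careful than the paper's, which simply asserts ``in all states, p.s.\ cannot occur''; in particular, you make explicit the requirement $n-h\ge 2$ (via the convention $h\le n-h$), which the paper leaves tacit but which is genuinely needed in the $x=w$ and $x=b_j'$ cases.
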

\begin{proof}
Let $ G=K_{h,n-h} $ and $ V(G)=\{ v_1, \cdots , v_h \} \cup \{ v_{h+1}, \cdots , v_n \} $. If $ v_i \in \{ v_1, \cdots , v_h \} $ and $ v_j \in  \{ v_{h+1}, \cdots , v_n \} $, then $ S=\{ v_i, v_j \} $ is a PD-set for $ M(G) $ because $ V(\mu(G)) \setminus \{ w \} $ are monitored in d.s and $ w $ is monitored in p.s. So $ \gamma_p(\mu(G)) \leq 2 $. Now we claim $ \gamma_p(\mu(G)) \neq 1 $. If $ \gamma_p(\mu(G))= 1 $, then we have five state;\\
\textbf{State 1:} $ S= \{ v_i \} $, for $ 1 \leq i \leq h $.\\
\textbf{State 2:} $ S= \{ v_j \} $, for $ h+1 \leq i \leq n $.\\
\textbf{State 3:} $ S= \{ u_i \} $, for $ 1 \leq i \leq h $.\\
\textbf{State 4:} $ S= \{ u_j \} $, for $ h+1 \leq i \leq n $.\\
\textbf{State 5:} $ S= \{ w \} $.\\
In all state, p.s can not occur. Therefore $ \gamma_p(\mu(G))= 2 $.
\end{proof}

A family graph closely related to Mycieleskian graph is called the shadow graph. The shadow graph $S(G)$ of a graph $G$ is the graph obtained from $G$ by adding a new vertex $ u_i $ for each vertex $ v_i $ of G and joining $ u_i $ to the neighbors of $ v_i $ in G \cite{gash99}. The vertex $ u_i $ is called the shadow vertex of $ v_i $ and we have called the vertex $ v_i $ the orginal vertex of $ u_i $.

\begin{theorem}\label{aaaaa}
If $G$ be a connected graph, then $ \gamma_p(G) \leq \gamma_p(S(G)) \leq 2 \gamma_p(G) $.
\end{theorem}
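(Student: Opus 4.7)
The plan is to prove both inequalities by constructing explicit power dominating sets that transfer between $G$ and $S(G)$. Throughout, write $V(G) = \{v_1, \ldots, v_n\}$ and $V(S(G)) = V(G) \cup \{u_1, \ldots, u_n\}$, and note that by the definition of the shadow graph, $N_{S(G)}(v_i) = N_G(v_i) \cup \{u_j : v_j \in N_G(v_i)\}$ while $N_{S(G)}(u_i) = N_G(v_i)$; in particular, every shadow vertex is adjacent only to original vertices, a rigidity I will exploit heavily.

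For the upper bound $\gamma_p(S(G)) \le 2\gamma_p(G)$, let $D$ be a minimum PD-set of $G$ and define $D' = D \cup \{u_i : v_i \in D\}$, so that $|D'| = 2\gamma_p(G)$. A direct computation gives $N_{S(G)}[D'] = N_G[D] \cup \{u_j : v_j \in N_G[D]\}$. I would prove by induction on the propagation that whenever $M$ is a monitored set obtained in $G$ starting from $N_G[D]$, the set $M \cup \{u_j : v_j \in M\}$ lies in the closure of $N_{S(G)}[D']$ in $S(G)$. If $v_k \to v_l$ is a valid force in $G$, then in $S(G)$ the shadow vertex $u_k$, whose neighborhood is exactly $N_G(v_k)$, has $v_l$ as its unique unmonitored neighbor, so $u_k \to v_l$; after this, every neighbor of $v_k$ in $S(G)$ is monitored except $u_l$, so $v_k \to u_l$. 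Thus two forces in $S(G)$ faithfully simulate a single force of $G$ while covering the shadow layer, and since $D$ is a PD-set of $G$, this iteratively yields $cl(N_{S(G)}[D']) = V(S(G))$.

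For the lower bound $\gamma_p(G) \le \gamma_p(S(G))$, let $D$ be a minimum PD-set of $S(G)$ and introduce the projection $\pi: V(S(G)) \to V(G)$ defined by $\pi(v_i) = \pi(u_i) = v_i$. Set $D^* = \pi(D)$, so $|D^*| \le |D|$. One easily verifies $\pi(N_{S(G)}[D]) \subseteq N_G[D^*]$. The key claim is that projection respects propagation: $\pi(cl(N_{S(G)}[D])) \subseteq cl(N_G[D^*])$. I would prove this by induction: given a monitored set $B$ in $S(G)$ with $\pi(B) \subseteq cl(N_G[D^*])$ and a force $x \to y$ in $S(G)$, the only nontrivial case is $\pi(y) \notin \pi(B)$, which forces both $v_l$ and $u_l$ to lie outside $B$, where $\pi(y) = v_l$. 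If $x = v_k$, then $v_k$ would have both $v_l$ and $u_l$ as unmonitored neighbors, contradicting uniqueness; and $y = u_l$ is impossible since a vertex forcing $u_l$ must be adjacent to $u_l$, forcing $x$ to be an original vertex and producing the same contradiction. Hence $x = u_k$ and $y = v_l$, with $N_G(v_k) \setminus \{v_l\} \subseteq B$; projecting, $v_k = \pi(u_k) \in \pi(B)$ and $N_G(v_k) \setminus \{v_l\} \subseteq cl(N_G[D^*])$, so $v_k \to v_l$ is a valid force in $G$ and $v_l \in cl(N_G[D^*])$. Applying the claim with $cl(N_{S(G)}[D]) = V(S(G))$ gives $V(G) \subseteq cl(N_G[D^*])$.

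The main obstacle is correctly coupling propagation on the original vertices with that on the shadow vertices. For the upper bound, one is tempted to try $D$ itself as a PD-set of $S(G)$, but a force $v_k \to v_l$ in $G$ fails to lift because $v_k$ inherits the extra unmonitored shadow neighbor $u_l$; doubling $D$ with its shadow copies is exactly the adjustment that lets $u_k$ perform the force instead. For the lower bound, the delicate step is ruling out the pathological scenario in which $u_l$ is forced while $v_l$ is still unmonitored, and it is precisely the fact that shadow vertices are adjacent only to original vertices that blocks this scenario.
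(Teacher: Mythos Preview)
Your proof is correct and follows essentially the same strategy as the paper: for the upper bound you double a minimum PD-set of $G$ with its shadow copies, and for the lower bound you project a minimum PD-set of $S(G)$ onto $V(G)$. Your simulation of a force $v_k \to v_l$ in $G$ by the pair $u_k \to v_l$, $v_k \to u_l$ in $S(G)$ is exactly the mechanism the paper describes (phrased there as forcing chains $(u_{i_l},v_{j_1})$, $(v_{i_l},u_{j_1},v_{j_2})$, \ldots). The one notable difference is that the paper simply asserts that the projected set is a PD-set of $G$, whereas you supply a genuine inductive argument showing $\pi(cl(N_{S(G)}[D])) \subseteq cl(N_G[D^*])$ via the case analysis on forces; this fills a gap the paper leaves implicit.
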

\begin{proof}
Let $ G $ is a graph with $ V(G) =\{ v_1,\cdots, v_n \} $ and $ V(S(G))= V(G) \cup \{ u_1, \cdots, u_n \} $. If $ S_{S(G)}= Y \cup W $ is a $ \gamma_p $-set for $ S(G) $ such that $ Y \subseteq V(G) $ and $ W \subseteq  \{ u_1, \cdots, u_n \} $, then $ S_G = Y \cup Y' $ is a PD-set for $ G $ that $ Y' \subseteq V(G) $ and $ Y' $ contains the orginal vertices of $ W $. So $ \gamma_p(G) \leq \gamma_p(S(G)) $.\\
In other hand, if $ S_G $ be a $ \gamma_p $-set for $ G $, then $ S_{S(G)}=S_{G} \cup S^{'}_{G} $ is a PD-set for $ S(G) $ that $ S^{'}_{G} $ contains the shadow vertices of $ S_G $.

\begin{figure}[!h]
	\centerline{\includegraphics[height=3.5cm]{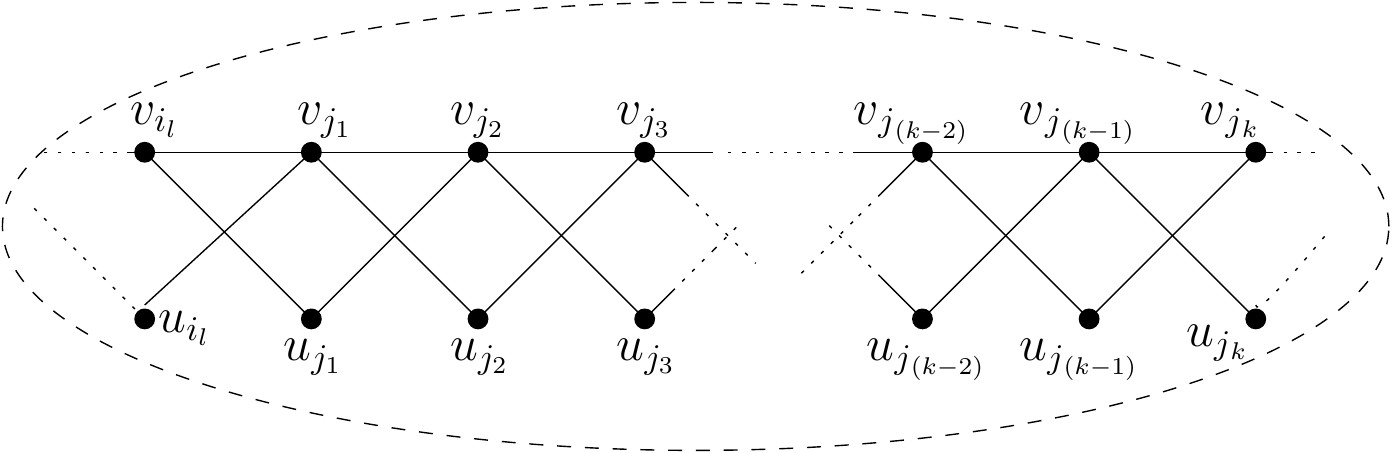}}
	\caption{Part of the graph $ G $}
	\label{partG}
\end{figure}

because suppose that $ v_i \in S_G $ and $ N(v_i)=\{ v_{i_1},\cdots, v_{i_k} \} $. in d.s, $ N(v_i) \cup \{ u_{i_1}, \cdots, u_{i_k} \} $ are monitored in $ S(G) $. Now according to figure \ref{partG}, if propagation accures from $ v_{i_l} $ and forcing chain be as $ ( v_{i_l} ,v_{j_1},\cdots, v_{j_k}) $ in $ G $, then propagation accures from $ u_{i_l} $ and forcing chains are as $ (u_{i_l}, v_{j_1}) $, $ (v_{i_l}, u_{j_1}, v_{j_2}) $, $ (v_{j_1}, u_{j_2},v_{j_3}) $, $ \cdots $, $ (v_{j_{(k-2)}}, u_{j_{(k-1)}}, v_{j_k}) $. Thus $ \gamma_p(S(G)) \leq 2 \gamma_p(G) $.
\end{proof}

\begin{lemma}
If $ S= \{ v_{i_1},\cdots, v_{i_k} \} $ be a $ \gamma_p $-set for a graph $ G $ in which every vertex $ v_i $;
\item[(1)] has been at least one neighbor $ v_j \in \{ V(G) \setminus S \} $ such that $ N[v_j] \subseteq N[v_i] $ or,
\item[(2)]  has been at least one neighbor $ v_j \in S $,\\
 then $ \gamma_p(G) = \gamma_p(S(G)) $.
\end{lemma}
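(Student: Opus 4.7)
My plan is to use Theorem \ref{aaaaa}, which gives $\gamma_p(G) \le \gamma_p(S(G))$, and then establish the reverse inequality $\gamma_p(S(G)) \le \gamma_p(G)$ by showing that a $\gamma_p$-set $S$ of $G$ satisfying the hypothesis is itself a power dominating set of $S(G)$ when viewed as a subset of $V(S(G))$. This yields $\gamma_p(S(G)) \le |S| = \gamma_p(G)$ and hence equality.

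First I would analyze the domination step in $S(G)$ from $S$. Since each $v_k \in S$ is adjacent in $S(G)$ both to every $v_j \in N_G(v_k)$ and to the corresponding shadow $u_j$, after the domination step the monitored set of $S(G)$ equals $N_G[S] \cup \{u_j : v_j \in N_G(S)\}$. The shadow vertices $u_i$ with $v_i \in S$ need not yet be monitored, so my next step would be to use the hypothesis on $v_i$ to secure them. If $v_i$ satisfies condition (2), a neighbor $v_k \in S$ of $v_i$ is adjacent to $u_i$ in $S(G)$, so $u_i$ is already monitored. If instead $v_i$ satisfies condition (1), let $v_j \in N_G(v_i) \setminus S$ with $N_G[v_j] \subseteq N_G[v_i]$ be the guaranteed neighbor; then $N_{S(G)}(v_j) = N_G(v_j) \cup \{u_k : v_k \in N_G(v_j)\}$, each original-vertex neighbor lies in $N_G[v_i] \subseteq N_G[S]$ and is monitored, and for every $v_k \in N_G(v_j)$ with $v_k \neq v_i$ the shadow $u_k$ has been monitored in the domination step by $v_i$ itself. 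Only $u_i$ is left unmonitored, so the force $v_j \rightarrow u_i$ applies.

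Next I would prove that the propagation of $G$ from $N_G[S]$ can be simulated inside $S(G)$. Setting $M_0 = N_G[S]$, at this point $S(G)$ has monitored at least $M_0 \cup \{u_j : v_j \in M_0\}$; I would maintain this shadow-closure invariant by induction along the forces of $G$. If the invariant holds for some monitored set $M$ of $G$ and the next force is $v_a \rightarrow v_b$, then in $S(G)$ the vertex $u_a$ is monitored by the invariant; its neighborhood equals $N_G(v_a)$, whose only unmonitored element is $v_b$, so $u_a \rightarrow v_b$. Afterwards $v_a$'s $S(G)$-neighborhood $N_G(v_a) \cup \{u_k : v_k \in N_G(v_a)\}$ is entirely monitored except for $u_b$, which delivers the force $v_a \rightarrow u_b$. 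Replaying this pair of forces along a chronological list of forces of $G$ eventually monitors all of $V(S(G)) = V(G) \cup \{u_1, \ldots, u_n\}$, finishing the argument.

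The hard part will be the case analysis for condition (1): verifying that in $S(G)$ every neighbor of $v_j$ other than $u_i$ is already monitored at the moment $v_j$ fires. This rests on two ingredients, namely that $N_G[v_j] \subseteq N_G[v_i]$ confines the original-vertex neighbors of $v_j$ inside $N_G[S]$, and that $v_i \in S$ ensures the shadow $u_k$ is monitored for every $v_k \in N_G(v_j) \setminus \{v_i\}$. Once this initial step is settled, the propagation simulation is routine bookkeeping and the conclusion follows.
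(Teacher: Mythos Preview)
Your proposal is correct and follows essentially the same approach as the paper: use Theorem~\ref{aaaaa} for the inequality $\gamma_p(G)\le\gamma_p(S(G))$, and for the reverse inequality show that the $\gamma_p$-set $S$ of $G$ is itself a power dominating set of $S(G)$ by first forcing each shadow $u_{i_l}$ of $v_{i_l}\in S$ (via condition (1) or (2)) and then replaying the propagation from the proof of Theorem~\ref{aaaaa}. The paper states this tersely (``$u_{i_l}$ is monitored by $v_j$'' and ``like the proof of theorem~\ref{aaaaa}''), whereas you spell out the invariant and the paired forces $u_a\rightarrow v_b$, $v_a\rightarrow u_b$ explicitly, but the underlying argument is the same.
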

\begin{proof}
If $ S= \{ v_{i_1},\cdots, v_{i_k} \} $ is a $ \gamma_p $-set for $ G $ and $ v_{i_l} \in S $ such that it has been one neighbor $ v_j \in \{ V(G) \setminus S \} $, then in p.s, $ u_{i_l} $ is monitored by $ v_j $($ u_{i_l} $ is the shadow vertex of $ v_{i_l} $) in S(G).\\
In other hand, if $ v_{i_l} $ has been on neighbor $ v_j \in S $, then in d.s $ u_{i_l} $ is monitored by $ v_j \in S(G) $. So like the proof of theorem \ref{aaaaa}, we have $ \gamma_p(S(G)) \leq  \gamma_p(G) $ and then $ \gamma_p(S(G)) = \gamma_p(G) $.
\end{proof}

\begin{cor}
If $ S = \{ v_1, \cdots, v_k \} $ be a $ \gamma_p $-set for a graph $ G $ and $ A \subseteq S $ that every vertex $ v_i $ in $ A $;
\item[(1)] has been any neighbor $ v_j \in \{ V(G) \setminus S \} $ such that $ N[v_j] \subseteq N[v_i] $ or,
\item[(2)] has been any neighbor $ v_j \in S $,\\
 then $ \gamma_p(S(G)) \leq 2 \gamma_p(G) - \mid A \mid $.
\end{cor}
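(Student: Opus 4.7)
The plan is to construct a power dominating set $S^{*}$ of $S(G)$ of size $2\gamma_p(G) - |A|$ and verify that the monitoring process succeeds. Starting from a $\gamma_p$-set $S = \{v_1, \ldots, v_k\}$ of $G$, I would take
\[
S^{*} \;=\; S \;\cup\; \{\, u_i : v_i \in S \setminus A \,\},
\]
so that $|S^{*}| = k + (k - |A|) = 2\gamma_p(G) - |A|$. The idea is that Theorem~\ref{aaaaa} builds a PD-set of $S(G)$ by doubling $S$ with all shadows $\{u_1, \ldots, u_k\}$, while the preceding lemma isolates exactly the situation under which the shadow $u_i$ is redundant. I simply discard the shadows indexed by $A$.

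The core of the verification is to show that after the domination step plus one round of propagation, every $u_i$ with $v_i \in S$ is black. For $v_i \in S \setminus A$ this is immediate, since $u_i \in S^{*}$. For $v_i \in A$ under condition~(2), some $v_j \in S \cap N_G(v_i)$ is adjacent to $u_i$ in $S(G)$, so $u_i \in N_{S(G)}[S^{*}]$ is covered in the domination step. For $v_i \in A$ under condition~(1), take $v_j \in N_G(v_i) \setminus S$ with $N_G[v_j] \subseteq N_G[v_i]$; then $v_j$ is dominated by $v_i$, and I would check that all of its other $S(G)$-neighbors are already black: its original neighbors lie in $N_G[v_i]$ and are dominated by $v_i$, while any shadow $u_m$ with $v_m \in N_G(v_j) \subseteq N_G[v_i]$ is either dominated by $v_i$ (if $v_m \in N_G(v_i)$) or equals $u_i$ itself (if $v_m = v_i$). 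Hence $v_j \rightarrow u_i$ as a propagation step.

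At this point the black set contains $N_{S(G)}[S] \cup \{u_1, \ldots, u_k\}$, which is precisely the state reached in the proof of Theorem~\ref{aaaaa} after its domination step. The rest of the argument is then identical to that theorem: each forcing chain $(v_{i_l}, v_{j_1}, v_{j_2}, \ldots)$ in $G$ lifts to the interleaved chains $(u_{i_l}, v_{j_1}), (v_{i_l}, u_{j_1}, v_{j_2}), \ldots$ in $S(G)$, monitoring every remaining vertex. The main obstacle is the careful bookkeeping for condition~(1): one must enumerate every $S(G)$-neighbor of $v_j$ and confirm that exactly one, namely $u_i$, remains white at the time of the force. Everything else is routine, since both additional procedures only enlarge the monitored set and cannot block each other.
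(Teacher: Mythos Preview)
Your proposal is correct and follows exactly the line the paper intends: the corollary is stated there without proof, as an immediate combination of Theorem~\ref{aaaaa} (which gives the doubled PD-set $S \cup S'$) and the preceding lemma (which shows the shadow $u_i$ is redundant whenever $v_i$ satisfies (1) or (2)); your set $S^{*}=S\cup\{u_i:v_i\in S\setminus A\}$ and the subsequent reduction to the forcing chains of Theorem~\ref{aaaaa} is precisely this combination made explicit. One reassuring point worth noting in your bookkeeping for condition~(1): since $N_G[v_j]\subseteq N_G[v_i]\subseteq N_G[S]$, the witness $v_j$ never initiates a forcing chain in $G$, so using $v_j\rightarrow u_i$ in $S(G)$ cannot conflict with the lifted chains from Theorem~\ref{aaaaa}.
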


\begin{theorem}\label{SG=1}
Let $ G $ be a connected graph. If $ G $ has one universal vertex, then $ \gamma_p(S(G)) =1 $.
\end{theorem}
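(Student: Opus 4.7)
The plan is to show directly that a single vertex suffices, namely a universal vertex $v_k$ of $G$ itself (viewed as a vertex of $S(G)$). Let $v_k \in V(G)$ satisfy $N_G[v_k] = V(G)$, and set $S = \{v_k\}$. I will verify the two ingredients needed: that the closed neighborhood of $S$ in $S(G)$ already covers everything except $u_k$, and then that one propagation move finishes the job.

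First I compute the domination step. By the definition of $S(G)$, the neighbors of $v_k$ in $S(G)$ are $N_G(v_k)$ together with every shadow $u_j$ such that $v_j \in N_G(v_k)$. Since $v_k$ is universal in $G$, this yields
\[
N_{S(G)}[v_k] = \{v_k\} \cup \{v_i : i \neq k\} \cup \{u_i : i \neq k\} = V(S(G)) \setminus \{u_k\}.
\]
So after the domination step, the only white vertex is $u_k$.

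Next, the propagation step. Again by the definition of the shadow graph, $u_k$ is adjacent precisely to the vertices in $N_G(v_k)$, which equals $\{v_i : i \neq k\}$ because $v_k$ is universal. Pick any $i \neq k$; then $u_k \in N_{S(G)}(v_i)$, and since every other vertex of $S(G)$ is already monitored, $u_k$ is the unique white neighbor of $v_i$. Hence the force $v_i \rightarrow u_k$ is legal, and after this one propagation step every vertex of $S(G)$ is monitored. Therefore $\{v_k\}$ is a power dominating set, which together with the trivial lower bound gives $\gamma_p(S(G)) = 1$.

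There is no real obstacle here: the whole argument rests on the observation that the shadow $u_k$ of a universal vertex inherits exactly the neighborhood $N_G(v_k)$, and universality guarantees both that $N_{S(G)}[v_k]$ misses only $u_k$ and that $u_k$ still has many monitored neighbors available to force it. The only thing to be a little careful about is the (harmless) fact that $v_k$ is \emph{not} adjacent to its own shadow $u_k$ in $S(G)$, which is precisely why $u_k$ fails to be dominated and must instead be picked up in the propagation step.
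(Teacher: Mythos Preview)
Your proof is correct and follows essentially the same approach as the paper: take the universal vertex as the singleton $S$, observe that the domination step monitors everything except its own shadow $u_k$, and then force $u_k$ via any $v_i$ with $i\neq k$. The paper's proof is just a terser statement of exactly this argument.
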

\begin{proof}
Let $ G $ is a graph that $ v_1 $ is an universal vertex of $ G $ and also $ V(G) = \{ v_1, \cdots, v_n \} $. We claim $ S=\{ v_1 \} $ is a $ \gamma_p $-set for $ S(G) $, because $ V(S(G)) \setminus \{ u_1 \} $ are monitored in d.s and $ \{ u_1 \} $ is monitored in p.s.
\end{proof}

In the following, you can see the direct result of the above theorem.

\begin{cor}
$ \gamma_p(S(K_n))=\gamma_p(S(K_{1,n}))=\gamma_p(S(W_n))=1 $
\end{cor}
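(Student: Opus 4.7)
The plan is to apply Theorem \ref{SG=1} directly to each of the three graph families, so the only thing requiring verification is that each of $K_n$, $K_{1,n}$, and $W_n$ possesses at least one universal vertex.

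First, I would observe that in $K_n$ every vertex is adjacent to every other vertex by definition, so every vertex is universal. For the star $K_{1,n}$, the central vertex (of maximum degree) is by construction adjacent to all of the leaves, and is therefore universal. For the wheel $W_n$, the hub vertex is, by the definition of the wheel, adjacent to every vertex of the outer cycle, so it too is universal.

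Having established the existence of a universal vertex in each of these three graph families, I would apply Theorem \ref{SG=1} to each in turn, concluding $\gamma_p(S(K_n)) \leq 1$, $\gamma_p(S(K_{1,n})) \leq 1$, and $\gamma_p(S(W_n)) \leq 1$. Combined with the trivial lower bound $\gamma_p(H) \geq 1$ for any non-empty graph $H$, equality follows in all three cases.

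The main obstacle here is essentially nonexistent: the statement is explicitly flagged in the text as a direct consequence of the preceding theorem, and the only content is the (immediate) observation that each of these standard graph families contains a universal vertex. No propagation argument or structural analysis of $S(G)$ itself is needed beyond what is already packaged inside Theorem \ref{SG=1}.
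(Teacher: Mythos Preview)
Your proposal is correct and matches the paper's approach exactly: the paper presents this corollary as ``the direct result of the above theorem'' (Theorem~\ref{SG=1}) without further argument, and your verification that $K_n$, $K_{1,n}$, and $W_n$ each have a universal vertex is precisely what is needed. One minor remark: Theorem~\ref{SG=1} already gives the equality $\gamma_p(S(G))=1$, so the detour through $\leq 1$ and the trivial lower bound is unnecessary, though harmless.
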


\begin{theorem}
$ \gamma_p(S(P_n))=1 $
\end{theorem}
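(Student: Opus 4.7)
The plan is to show that $S=\{v_2\}$ is a PD-set of $S(P_n)$, where $V(P_n)=\{v_1,\dots,v_n\}$ is labeled along the path. In $S(P_n)$, the shadow vertex $u_i$ is joined to the neighbors of $v_i$ in $P_n$, so $u_1$ and $u_n$ are leaves (adjacent only to $v_2$ and $v_{n-1}$ respectively), while each interior $u_i$ has exactly the two neighbors $v_{i-1},v_{i+1}$. This sparsity on the shadow side is what will make one-vertex forcing feasible.

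First I would spell out the domination step: $N[v_2]=\{v_1,v_2,v_3,u_1,u_3\}$, since $v_2$'s neighbors in $S(P_n)$ are $v_1, v_3$ from $P_n$, together with $u_1$ (whose only neighbor is $v_2$) and $u_3$ (whose two neighbors are $v_2,v_4$). The small cases $n\in\{2,3\}$ can then be dispatched directly: when $n=2$ the graph $S(P_2)$ is the path $u_1v_2v_1u_2$, and when $n=3$ the only vertex still white after the domination step is $u_2$, which is immediately forced by $v_1$ (since $N(v_1)=\{v_2,u_2\}$).

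For $n\ge 4$, I would execute two initial forces from $N[v_2]$: $v_1\to u_2$, because $N(v_1)=\{v_2,u_2\}$, and $u_3\to v_4$, because $N(u_3)=\{v_2,v_4\}$. Then $v_3\to u_4$, since $N(v_3)=\{v_2,v_4,u_2,u_4\}$ and only $u_4$ is still white. After this the set $\{v_1,\dots,v_4\}\cup\{u_1,\dots,u_4\}$ is black, giving the base case of an induction on $k$: assuming $\{v_1,\dots,v_k\}\cup\{u_1,\dots,u_k\}$ is black for some $4\le k\le n-1$, apply $u_k\to v_{k+1}$ (since $N(u_k)=\{v_{k-1},v_{k+1}\}$), followed by $v_k\to u_{k+1}$ (since $N(v_k)=\{v_{k-1},v_{k+1},u_{k-1},u_{k+1}\}$ has unique white member $u_{k+1}$). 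Iterating up to $k=n-1$ blackens every vertex of $S(P_n)$; the boundary case $k=n-1$ is fine because $u_n$ still has $v_{n-1}$ as its unique neighbor, and $v_{n-1}\to u_n$ is the final force.

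The main obstacle is not conceptually deep: it is the careful bookkeeping of neighborhoods in $S(P_n)$ needed to confirm that at each propagation step the active vertex has exactly one white neighbor. The leaf behavior of $u_1,u_n$ together with the degree-$2$ structure of the interior shadow vertices is what ensures this uniqueness throughout the inductive step.
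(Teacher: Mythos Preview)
Your proof is correct and follows the same approach as the paper: both choose $S=\{v_2\}$ as the power dominating set and rely on the domination step picking up $\{v_1,v_3,u_1,u_3\}$. The paper's proof simply asserts that ``other vertices are monitored in several propagation steps'' without spelling out a single force, whereas you carry out the bookkeeping explicitly (the initial forces $v_1\to u_2$, $u_3\to v_4$, $v_3\to u_4$, followed by the inductive pair $u_k\to v_{k+1}$, $v_k\to u_{k+1}$), which makes your argument strictly more detailed than the original.
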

\begin{proof}
Let $ G=P_n $ that $ V(G)= \{ v_1, \cdots , v_n \} $. We claim that $ S=\{v_2\} $ is a $ \gamma_p $-set for $ S(G) $ because $ \{ v_1, v_3, u_1, u_3 \} $ are monitored in d.s and other vertices are monitored in several propagation steps.
\end{proof}

\begin{theorem}
If $ G=C_n $, then
\begin{equation*}
\gamma_p(S(G))=
\begin{cases}
1 & n = 3 \\
2  & n \geq 4.
\end{cases}
\end{equation*}
\end{theorem}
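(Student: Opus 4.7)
The plan is to split the argument into the two claimed regimes, exploiting the symmetry of $S(C_n)$. Label $V(C_n) = \{v_1,\dots,v_n\}$ cyclically, so in $S(C_n)$ we have $N(v_i) = \{v_{i-1},v_{i+1},u_{i-1},u_{i+1}\}$ and $N(u_i) = \{v_{i-1},v_{i+1}\}$ (indices mod $n$).

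For $n=3$, every vertex of $C_3$ is universal, so Theorem~\ref{SG=1} immediately gives $\gamma_p(S(C_3))=1$. This disposes of the easy case with no computation.

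For $n\ge 4$, I will prove the two-sided bound separately. For the upper bound, I propose $S=\{v_1,u_1\}$, so $N[S]=\{v_1,v_2,v_n,u_1,u_2,u_n\}$. In the first propagation round, $u_2$ has the single white neighbor $v_3$ and $u_n$ has the single white neighbor $v_{n-1}$, forcing both. In the next round, $v_2$ forces $u_3$ (its only remaining white neighbor) and $v_n$ forces $u_{n-1}$. From here I claim the ``wave'' propagates inward in both directions: once $v_i,u_i,v_{i+1},u_{i+1}$ are black, the vertex $u_{i+1}$ (with $N(u_{i+1})=\{v_i,v_{i+2}\}$) forces $v_{i+2}$, and then $v_{i+1}$ (with $N(v_{i+1})=\{v_i,v_{i+2},u_i,u_{i+2}\}$) forces $u_{i+2}$, and symmetrically on the other side; a straightforward induction on the distance around the cycle then closes out all vertices. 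So $\gamma_p(S(C_n))\le 2$.

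For the lower bound, I need to rule out every singleton $S$. By the rotational symmetry of $S(C_n)$ there are only two orbits, so it suffices to test $S=\{v_1\}$ and $S=\{u_1\}$. Taking $S=\{v_1\}$ gives $N[S]=\{v_1,v_2,v_n,u_2,u_n\}$; the first forces are $u_2\to v_3$ and $u_n\to v_{n-1}$, after which one must check that \emph{every} black vertex has at least two white neighbors — in particular $v_2$ sees the two white vertices $u_1,u_3$, and $v_n$ sees $u_1,u_{n-1}$, while $v_3$ (resp. $v_{n-1}$) sees both $v_4,u_4$ (resp. $v_{n-2},u_{n-2}$). Thus propagation halts with $u_1$ still white, contradicting $\mathrm{cl}(N[S])=V(S(C_n))$. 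Taking $S=\{u_1\}$ gives $N[S]=\{u_1,v_2,v_n\}$, and one checks directly that every black vertex already has at least two white neighbors, so propagation never starts. Combining the two bounds finishes the case $n\ge 4$.

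The only delicate step is the induction in the upper bound: I need to verify carefully that after the first two rounds the configuration is symmetric enough that each subsequent double force ($u_i\to v_{i+1}$ followed by $v_i \to u_{i+1}$) truly applies, and also that the two inward-moving waves meet cleanly (an easy case-check distinguishing the parities of $n$). Everything else is a routine degree count in $S(C_n)$.
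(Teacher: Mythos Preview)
Your overall approach matches the paper's: the same appeal to Theorem~\ref{SG=1} for $n=3$, the same PD-set $\{v_1,u_1\}$ for the upper bound, and the same two-orbit case split for the lower bound. However, your lower-bound analysis of $S=\{v_1\}$ breaks down at $n=5$, and in fact the statement as written is false there. After the forces $u_2\to v_3$ and $u_n\to v_{n-1}$ you assert that $v_3$ still has the two white neighbors $v_4,u_4$; but for $n=5$ one has $v_{n-1}=v_4$, which has just been colored black, so $v_3$'s only white neighbor is $u_4$ and the force $v_3\to u_4$ fires. Symmetrically $v_4\to u_3$, and then $v_2\to u_1$, so $\{v_1\}$ \emph{is} a power dominating set of $S(C_5)$ and $\gamma_p(S(C_5))=1$. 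The paper's own proof contains exactly the same oversight: its blanket claim that after the first propagation round every black vertex has at least two unmonitored neighbors is simply not true when the two forced vertices $v_{i-2},v_{i+2}$ are adjacent.

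A minor related point: for $n=4$ your sentence ``$v_3$ sees both $v_4,u_4$'' is also literally incorrect, since here $v_3=v_{n-1}$ is the unique newly forced vertex and all four of its neighbors $v_2,v_4,u_2,u_4$ are already black. Propagation does still halt (because $v_2$ and $v_4$ each see the white pair $\{u_1,u_3\}$), but the correct stopping criterion is that every black vertex has either zero or at least two white neighbors, not that it has at least two. In short, your argument as written is valid only for $n\ge 6$; the cases $n=4$ and $n=5$ need separate handling, with the latter actually belonging to the $\gamma_p=1$ regime.
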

\begin{proof}
Let $ G=C_n $ and $ V(G)=\{ v_1, \cdots, v_n \} $. For $ n=3 $, according to theorem \ref{SG=1} $ \gamma_p(S(C_3))=1 $. For $ n \geq 4 $, $ S= \{ v_1, u_1 \} $ is a PD-set for $ S(G) $ so $ \gamma_p(S(G)) \leq 2 $. Now we prove $ \gamma_p(S(G)) \neq 1 $. If $ \gamma_p(S(G))= 1 $, then we have two state;\\
\textbf{State 1:} $ S= \{ v_i \} $, for $ 1 \leq i \leq n $.\\
\textbf{State 2:} $ S= \{ u_i \} $, for $ 1 \leq i \leq n $.\\
In state 1, $ \{ v_{i-1}, v_{i+1}, u_{i-1}, u_{i+1} \} $ are monitored in d.s and $ \{ v_{i-2}, v_{i+2} \} $ are monitored  in p.s and p.s can not occur because all of $ \{ v_{i-2}, v_{i+2},  v_{i-1}, v_{i+1}, u_{i-1}, u_{i+1} \} $ are adjacent to at least two vertices of $ V(S(G)) \setminus \{  v_{i-2}, v_{i+2}, v_{i-1}, v_{i+1}, u_{i-1}, u_{i+1} \} $.\\
In state 2, $ \{v_{i-1}, v_{i+1} \} $ are monitored in d.s but p.s can not occur.
\end{proof}

\begin{theorem}
$ \gamma_p(S(K_{h,n-h}))= 2 $ , for $ h \geq 2, n \geq 4 $.
\end{theorem}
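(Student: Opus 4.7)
The plan is to imitate the proof of the preceding Mycieleskian analogue, adapted to the slightly different shadow adjacencies. Write $A=\{v_{1},\ldots,v_{h}\}$ and $B=\{v_{h+1},\ldots,v_{n}\}$ for the two sides of $K_{h,n-h}$, and $A'=\{u_{1},\ldots,u_{h}\}$, $B'=\{u_{h+1},\ldots,u_{n}\}$ for their shadows. Unravelling the shadow construction gives the adjacencies
\[
N_{S(G)}(v_{i})=B\cup B'\ \ (v_{i}\in A),\quad N_{S(G)}(u_{i})=B\ \ (u_{i}\in A'),
\]
and symmetrically $N_{S(G)}(v_{j})=A\cup A'$ and $N_{S(G)}(u_{j})=A$ for vertices on the other side. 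Hence $S(K_{h,n-h})$ is bipartite with parts $A\cup A'$ and $B\cup B'$; it is \emph{not} complete bipartite, because shadow vertices are only adjacent to original vertices on the opposite side.

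For the upper bound I would pick $S=\{v_{i},v_{j}\}$ with $v_{i}\in A$ and $v_{j}\in B$. The formulas above give $N[v_{i}]=\{v_{i}\}\cup B\cup B'$ and $N[v_{j}]=\{v_{j}\}\cup A\cup A'$, so already $N[S]=V(S(K_{h,n-h}))$. Thus $S$ is a dominating set, in particular a PD-set, and $\gamma_{p}(S(K_{h,n-h}))\le 2$.

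For the lower bound I would rule out $\gamma_{p}=1$ by going through the four possible single-vertex states, exactly as in the Mycieleskian argument: $S=\{v_{i}\}$ with $v_{i}\in A$, $S=\{v_{j}\}$ with $v_{j}\in B$, $S=\{u_{i}\}$ with $u_{i}\in A'$, and $S=\{u_{j}\}$ with $u_{j}\in B'$. For each state I would compute the white set $W$ after the domination step and then, for every black vertex $x$, count $|N(x)\cap W|$; a force is available only when this count equals one. For instance, starting from $v_{i}\in A$ the white set is $(A\setminus\{v_{i}\})\cup A'$, every $x\in B$ has $2h-1$ white neighbors, and every $u_{\ell}\in B'$ has $h-1$ white neighbors, so no force fires once $h\ge 3$. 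The states $S=\{u_{i}\}$ and $S=\{u_{j}\}$ are even cleaner, because the shadow vertex has the smaller neighborhood $B$ (resp.\ $A$), so an entire ``opposite shadow'' class remains white and is seen by each black neighbor at least twice.

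The main subtlety is the boundary cases $h=2$ or $n-h=2$, where a propagation step can actually fire. For instance with $h=2$ and $S=\{v_{1}\}$, each $u_{\ell}\in B'$ has $v_{2}$ as its unique white neighbor and so forces it. I would handle this by following the chronological list of forces one step further: after $v_{2}$ is blackened every $v_{\ell}\in B$ sees exactly the two white vertices of $A'$, while every $u_{\ell}\in B'$ sees none, so propagation dies with $A'$ still white. The case $n-h=2$ is symmetric. Putting the four states together gives $\gamma_{p}(S(K_{h,n-h}))\ne 1$, which combined with the upper bound yields the equality.
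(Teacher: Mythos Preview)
Your proposal is correct and follows essentially the same approach as the paper: the same two-vertex dominating set $\{v_i,v_j\}$ for the upper bound, and the same four-state case split for the lower bound. The paper, however, disposes of all four states with a single ``it is clear that all vertices are not monitored,'' whereas you actually carry out the neighbor counts and, in particular, catch the boundary subtlety at $h=2$ (or $n-h=2$) where one propagation step does fire before the process stalls; this extra care is warranted and makes your argument more complete than the paper's.
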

\begin{proof}
Let $ G=K_{h,n-h} $ and $ V(G)=\{ v_1, \cdots , v_h \} \cup \{ v_{h+1}, \cdots , v_n \} $. If $ v_i \in \{ v_1, \cdots , v_h \} $ and $ v_j \in  \{ v_{h+1}, \cdots , v_n \} $, then $ S=\{ v_i, v_j \} $ is a PD-set for $ S(G) $ because $ V(S(G)) $ are monitored in d.s. So $ \gamma_p(S(G)) \leq 2 $. Now we claim $ \gamma_p(S(G)) \neq 1 $. If $ \gamma_p(S(G))= 1 $, then we have four state;\\
\textbf{State 1:} $ S= \{ v_i \} $, for $ 1 \leq i \leq h $.\\
\textbf{State 2:} $ S= \{ v_j \} $, for $ h+1 \leq i \leq n $.\\
\textbf{State 3:} $ S= \{ u_i \} $, for $ 1 \leq i \leq h $.\\
\textbf{State 4:} $ S= \{ u_j \} $, for $ h+1 \leq i \leq n $.\\
In all state, it is clear that all vertices are not monitored. Therefore $ \gamma_p(S(G))= 2 $.
\end{proof}

\section{Central graphs}\label{dp}

\begin{defi}[\cite{ve07}]\label{c=1}
The central graph $C(G)$ of a graph $G$ of order $n$ and size $m$ is a graph of order $n + m$ and size $(^{n}_{2}) + m$ which is obtained by subdividing each edge of $G$ exactly once and joining all the non-adjacent vertices of $G$ in $C(G)$.
\end{defi}

\emph{The anti-cycle vertex} $ v $ of a graph $ G $ is a vertex such that $ G[V(G) \setminus \{ v \}] $ is an acyclic (a graph having no cycle). Note that $ G[V(G) \setminus \{ v \}] $ can be connected or disconnected.

\begin{theorem}\label{cc=1}
Let $ G $ is connected graph. If $ G $ has at least one anti-cycle vertex, then $ \gamma_p(C(G)) =1 $.
\end{theorem}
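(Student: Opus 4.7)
The plan is to show that $S = \{v\}$ is itself a power dominating set of $C(G)$, where $v$ is the given anti-cycle vertex. I would proceed in two forcing waves after the initial domination step: the first wave propagates monitoring back to the neighbors $N_G(v)$ through the degree-$2$ subdivision vertices incident to $v$, and the second wave exploits the fact that $G-v$ is a forest to sweep all remaining subdivision vertices by an iterative leaf-stripping argument.

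First I would unpack the structure of $C(G)$ around $v$. By the definition of the central graph, $v$ is adjacent in $C(G)$ to every non-neighbor of $v$ in $G$ and to every subdivision vertex $u_{vx}$ corresponding to an edge $vx \in E(G)$. Hence, after the domination step, the monitored set equals
\[
N_{C(G)}[v] \;=\; \{v\} \cup \bigl(V(G) \setminus N_G[v]\bigr) \cup \{u_{vx} : x \in N_G(v)\},
\]
and the unmonitored vertices are precisely $N_G(v)$ together with the subdivision vertices $u_e$ for $e \in E(G-v)$.

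The first propagation wave is immediate: each $u_{vx}$ has only two neighbors in $C(G)$, namely $v$ and $x$, so $x$ is its unique unmonitored neighbor and the force $u_{vx} \to x$ fires. After this wave the whole original vertex set $V(G)$ is monitored, and only the subdivision vertices $u_e$ with $e \in E(G-v)$ remain white.

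For the second wave I invoke the anti-cycle hypothesis. For any $x \in V(G-v)$, all non-subdivision neighbors of $x$ in $C(G)$ lie in $V(G)$ and are therefore already monitored, and the subdivision vertex $u_{vx}$ (if $x \in N_G(v)$) is also monitored; hence the unmonitored neighbors of $x$ are exactly $\{u_e : e \in E(G-v) \text{ is incident to } x\}$, a set of cardinality $\deg_{G-v}(x)$. Since $G-v$ is a forest, it contains a leaf whenever it still carries an edge, and that leaf can force the subdivision vertex of its unique remaining edge; after the force, the ``unforced'' edges of $G-v$ still form a forest, so the process iterates and terminates with every $u_e$ monitored. The only delicate point is the book-keeping in this last wave---verifying that forcing $u_e$ really reduces the unmonitored-degree of both endpoints of $e$ by exactly one and that the residual unforced-edge graph remains a forest---but this is routine, and the leaf-stripping terminates in $|E(G-v)|$ steps, yielding $cl(N[\{v\}])=V(C(G))$ and therefore $\gamma_p(C(G))=1$.
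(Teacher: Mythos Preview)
Your proof is correct and follows essentially the same approach as the paper: both take the anti-cycle vertex $v$ as the singleton PD-set, use the degree-$2$ subdivision vertices $u_{vx}$ to force the $G$-neighbors of $v$ in a first propagation wave, and then exploit the forest structure of $G-v$ to sweep the remaining subdivision vertices by iterated leaf-forcing. Your write-up is in fact cleaner than the paper's, which contains index typos in its description of $N_{C(G)}[v_k]$ and leaves the leaf-stripping step as a one-line remark, whereas you spell out the invariant (unmonitored-degree of $x$ equals its degree in the residual forest of unforced edges) that makes the iteration go through.
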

\begin{proof}
Let $ G $ be a graph that $ V(G) = \{ v_1, \cdots , v_n \} $ and
 \[ V(C(G))=\{ v_1, \cdots , v_n \} \cup \{u_{i,j} \mid v_i v_j \in E(G) ~~for~ some~ 1 \leq i , j \leq n \} \] 
and also $ v_k $ be an anti-cycle vertex of $ G $. We prove that $ S=\{ v_k \} $ is a PD-set for $ C(G) $. At the first, we consider the state that $ V(G) \setminus \{ v_k \} $ is connected because when it isn't connected will also be also proven like this state. Now let $ deg_{G}(v_k)=s $ and without losing generality suppose that $ N_G (v_k)=\{ v_1, v_2, \cdots , v_s \} $, so $ v_k $ is not adjacent to $ \{ v_{s+1}, \cdots , v_n \} $. In the other hand 
\[ N_{C(G)} [v_k]= \{ v_1, v_2, \cdots , v_s, v_k, u_{k(s+1)}, \cdots, u_kn \}, \]
then $ \{ v_{s+1}, \cdots , v_n \} $ are monitored in p.s. Notice that $ V(G) \setminus \{ v_k \} $ induce a tree and all of $ V(G) \setminus \{ v_k \}  $ are monitored. So the propagation can easily be continued from the leaves. Thus in the next p.s, $ \{u_{ij}\}_{i,j} $ (for some $ 1 \leq i,j \leq n $ and $ i,j \neq k $) are monitored.
\end{proof}


Inverse of theorem \ref{c=1} is not true. For example $ K_{3,3} $ has no anti-cycle vertex but $ \gamma_p(C(K_{3,3}))=1 $(proved in theorem \ref{CKmn}).\\
Next, we calculate power domination number of central of the graphs in the table 1.
In the following, you can see the direct result of theorem \ref{cc=1}.


\begin{cor}
$ \gamma_p(C(P_n))= \gamma_p(C(C_n))=\gamma_p(C(K_{1,n}))=\gamma_p(C(K_{2,n}))= 1 $
\end{cor}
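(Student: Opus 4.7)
The plan is to invoke Theorem \ref{cc=1} directly: for each of the four families, I will exhibit an anti-cycle vertex and conclude $\gamma_p(C(G))=1$. Since $\gamma_p(H)\ge 1$ for any nonempty graph, establishing the upper bound suffices.

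First, I would record the trivial verifications, graph by graph. For $G=P_n$, any endpoint $v$ satisfies $G[V(G)\setminus\{v\}]=P_{n-1}$, which is a tree, so $v$ is an anti-cycle vertex. For $G=C_n$, deleting any vertex yields $P_{n-1}$, again acyclic, so every vertex is anti-cycle. For $G=K_{1,n}$, removing the central (universal) vertex gives the edgeless graph on $n$ vertices, which is acyclic; the center is therefore anti-cycle. For $G=K_{2,n}$ with bipartition $\{a,b\}\cup\{c_1,\dots,c_n\}$, removing $a$ yields the graph with vertex set $\{b,c_1,\dots,c_n\}$ and edges $\{bc_i:1\le i\le n\}$, i.e., the star $K_{1,n}$ rooted at $b$, which is a tree. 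Hence $a$ is an anti-cycle vertex.

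Having checked the hypothesis of Theorem \ref{cc=1} in each case, the theorem immediately yields $\gamma_p(C(P_n))=\gamma_p(C(C_n))=\gamma_p(C(K_{1,n}))=\gamma_p(C(K_{2,n}))=1$, proving the corollary.

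There is essentially no obstacle here: the entire content of the corollary is the observation that these four families all contain a vertex whose removal leaves a forest. The only minor subtlety to keep in mind, already handled inside the proof of Theorem \ref{cc=1}, is that $G-v$ need not be connected (this matters in a disconnected star, for instance if one removed the center of $K_{1,n}$), but acyclicity alone is what the theorem requires, so no extra argument is needed.
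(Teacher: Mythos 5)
Your proposal is correct and follows exactly the paper's intended route: the corollary is stated as a direct consequence of Theorem \ref{cc=1}, and you simply supply the (easy) verification that each of $P_n$, $C_n$, $K_{1,n}$ and $K_{2,n}$ has an anti-cycle vertex. The explicit case-by-case check is a welcome elaboration of what the paper leaves implicit, and your remark about disconnectedness of $G-v$ being harmless is consistent with how Theorem \ref{cc=1} is proved.
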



\begin{theorem}\label{CKmn}
$ \gamma_p(C(K_{h,n-h}))= 1 $ , for $ h \geq 3, n \geq 6 $.
\end{theorem}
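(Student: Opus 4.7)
The plan is to exhibit a single-vertex PD-set in $C(G)$ for $G=K_{h,n-h}$. Label the partite sets by $A=\{a_1,\ldots,a_h\}$ and $B=\{b_1,\ldots,b_{n-h}\}$, and write $u_{ij}$ for the subdivision vertex of the edge $a_ib_j$. By the definition of the central graph, in $C(G)$ the set $A$ induces a clique $K_h$ (every pair in $A$ is non-adjacent in $G$), the set $B$ induces $K_{n-h}$, and each $u_{ij}$ has exactly the two neighbors $a_i,b_j$.

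I would take $S=\{a_1\}$ and trace the forcing process. The domination step yields $N[a_1]=A\cup\{u_{1,j}:1\le j\le n-h\}$, because $a_1$ is joined in $C(G)$ to every other vertex of $A$ and to the subdivision vertex along each edge incident to $a_1$. In the first propagation step every $u_{1,j}$ has its unique white neighbor $b_j$, so the forces $u_{1,j}\to b_j$ all fire and $B$ becomes entirely black. After this round, the remaining white vertices are exactly the $(h-1)(n-h)$ subdivision vertices $u_{i,j}$ with $i\ge 2$. The crux of the argument, and the step where I would mimic the leaf-propagation strategy used in the proof of Theorem~\ref{cc=1}, is to order the subsequent forces -- exploiting the cliques $K_h$ on $A$ and $K_{n-h}$ on $B$ -- so that a cascade sweeps column-by-column through the grid $\{u_{i,j}\}_{i\ge 2,\,j}$.

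The main obstacle is getting this final cascade started. Each $u_{i,j}$ with $i\ge 2$ has both its neighbors $a_i,b_j$ already black, but at this stage $a_i$ has $n-h\ge 3$ white subdivision neighbors and $b_j$ has $h-1\ge 2$ of them, so no immediate force is available from the current black set. Identifying the first triggering force -- either via a careful re-examination of adjacencies between the clique edges of $A$, $B$ and the subdivision vertices, or by slightly refining the initial choice of $S$ (for instance to a subdivision vertex $u_{11}$, which gives a different local structure) -- is where I expect the bulk of the proof's ingenuity to lie. Once some single $u_{i_0,j_0}$ has been forced, the degree-$(n-h)$ vertex $a_{i_0}$ loses one white neighbor, and by iterating along the row and column, the remaining propagation through the grid should be routine.
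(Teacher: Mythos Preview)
Your approach is exactly the paper's: take $S=\{a_1\}$, observe that the domination step colours $A\cup\{u_{1,j}:j\}$, and then the forces $u_{1,j}\to b_j$ colour all of $B$. The paper's proof simply stops there, declaring that ``finally $\{v_{h+1},\ldots,v_n\}$ are monitored in p.s.'' without addressing the remaining subdivision vertices $u_{i,j}$ with $i\ge 2$. You, to your credit, noticed that this is the crux and flagged it as the ``main obstacle''.

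Unfortunately the obstacle is fatal, not merely technical. At the stage you describe, the white set is exactly $\{u_{i,j}:2\le i\le h,\ 1\le j\le n-h\}$, and a direct check shows that \emph{every} black vertex has either $0$ or at least $2$ white neighbours: $a_1$ and each $u_{1,j}$ have none; each $a_i$ with $i\ge 2$ has $n-h\ge 3$; each $b_j$ has $h-1\ge 2$. So the closure halts and $\{a_1\}$ is not a PD-set. By the symmetry $h\leftrightarrow n-h$, no vertex of $B$ works either, and starting from a subdivision vertex $u_{i_0,j_0}$ is even worse (its only neighbours $a_{i_0},b_{j_0}$ each acquire $n-2\ge 4$ white neighbours, and the process stalls immediately). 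Hence $\gamma_p(C(K_{h,n-h}))\ge 2$ whenever $h\ge 3$ and $n-h\ge 3$; for instance $\gamma_p(C(K_{3,3}))=2$, witnessed by $S=\{a_1,a_2\}$.

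In short: your proof sketch matches the paper's, your instinct that something was missing was correct, and the missing step cannot be supplied because the stated theorem is false. There is no ``first triggering force'' to find.
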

\begin{proof}
Let $ G=K_{h,n-h} $ and $ V(G)=\{ v_1, \cdots , v_h \} \cup \{ v_{h+1}, \cdots , v_n \} $.  $ S=\{ v_k \} $ is a $ \gamma_p $-set for $ C(G) $, such that $ v_i $ is an arbitrary vertex of $ G $, bacause $ \{ v_1, \cdots , v_h \} $ and $  \{ v_{h+1}, \cdots , v_n \} $ induce $ K_h $ and $ K_{n-h} $ , respectively, in $ C(G) $. If $ v_k \in \{ v_1, \cdots , v_k \} $, then
\[ \{ v_1, \cdots , v_k \} \cup \{u_{i,j} \mid v_i v_j \in E(G) ~~for~  1 \leq i \leq k ~ and ~ h+1 \leq j \leq n \} \]
are monitored in d.s. Finally $ \{ v_{h+1}, \cdots , v_n \} $ are monitored in p.s.
\end{proof}

\begin{theorem}
$ \gamma_p(C(W_n))=2. $
\end{theorem}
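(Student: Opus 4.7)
I will denote by $v_0$ the hub of $W_n$ and by $v_1,\dots,v_{n-1}$ the rim vertices, with cyclic rim edges $v_iv_{i+1}$ (indices mod $n-1$). In $C(W_n)$, each spoke $v_0v_i$ is subdivided by a vertex $s_i$ and each rim edge $v_iv_{i+1}$ by a vertex $t_i$; and since the only non-adjacent pairs in $W_n$ are the non-consecutive rim pairs, $C(W_n)$ also contains every edge $v_iv_j$ with $j\notin\{i-1,i,i+1\}$. Thus $N(v_0)=\{s_1,\dots,s_{n-1}\}$, $N(s_i)=\{v_0,v_i\}$, $N(t_i)=\{v_i,v_{i+1}\}$, and $N(v_i)=\{s_i,t_{i-1},t_i\}\cup\{v_j:j\notin\{i-1,i,i+1\}\}$.

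For the upper bound I will verify that $S=\{v_0,t_1\}$ is a PD-set. The domination step monitors $N[S]=\{v_0,t_1,v_1,v_2,s_1,\dots,s_{n-1}\}$. For every $i\notin\{1,2\}$ the vertex $s_i$ has its unique white neighbor $v_i$, so $s_i\to v_i$ and the entire rim becomes monitored. After this, the only white neighbor of $v_2$ is $t_2$, giving $v_2\to t_2$, and iterating $v_3\to t_3,\dots,v_{n-1}\to t_{n-1}$ around the cycle exhausts the remaining rim-subdivision vertices, proving $\gamma_p(C(W_n))\le 2$.

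For the lower bound I will rule out singleton PD-sets by exploiting the symmetries of $C(W_n)$ to reduce to four cases. \textbf{Case (a):} $S=\{v_0\}$. After the forces $s_i\to v_i$ each $v_i$ retains the two white neighbors $t_{i-1},t_i$, so propagation halts and every $t_i$ stays white. \textbf{Case (b):} $S=\{v_1\}$. The forces $s_1\to v_0$, $t_1\to v_2$, $t_{n-1}\to v_{n-1}$ complete the rim; afterward $v_0$ has $n-2\ge 2$ white $s$-neighbors, each $v_j$ with $3\le j\le n-2$ retains its three white subdivision neighbors $s_j,t_{j-1},t_j$, and $v_2$ (resp.\ $v_{n-1}$) still has exactly the two white neighbors $s_2,t_2$ (resp.\ $s_{n-1},t_{n-2}$), so no further force is possible. \textbf{Cases (c) and (d):} $S=\{s_1\}$ or $S=\{t_1\}$. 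The domination step yields only three black vertices; in each case $v_0$ has at least $n-2$ white $s$-neighbors and each of the two black rim vertices has at least two white neighbors among its subdivisions, so propagation halts immediately.

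The main obstacle will be the bookkeeping in Case (b): the many non-consecutive-rim adjacencies in $C(W_n)$ superficially look as if they should help a rim vertex force, but once they are accounted for, every monitored rim vertex still retains at least two white subdivision neighbors, blocking any cascade. Combining the upper and lower bounds yields $\gamma_p(C(W_n))=2$.
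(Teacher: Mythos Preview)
Your proof is correct and follows essentially the same strategy as the paper: exhibit an explicit two-element PD-set for the upper bound, and rule out every singleton by a symmetry-reduced case analysis for the lower bound. Two cosmetic differences: the paper uses $\{\text{hub},\text{rim vertex}\}$ as its PD-set while you use $\{\text{hub},\text{rim-subdivision vertex}\}$, and the paper lumps all subdivision vertices into a single case whereas you (more carefully) separate spoke subdivisions $s_i$ from rim subdivisions $t_i$.

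One small wording slip to fix in Cases~(c)/(d): your sentence ``in each case $v_0$ has at least $n-2$ white $s$-neighbors and each of the two black rim vertices\ldots'' does not literally fit both cases, since in Case~(c) the three black vertices are $\{s_1,v_0,v_1\}$ (only one rim vertex) and in Case~(d) they are $\{t_1,v_1,v_2\}$ (no $v_0$). The conclusion is still correct in each case---every black original vertex has at least two white neighbors and the black subdivision vertex has none---so just rephrase that sentence to treat the two cases separately.
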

\begin{proof}
Let $ G=W_n $ and $ V(G) = \{ v_1, \cdots , v_n \} $ such that $ v_1 $ be the universal vertex in $ G $ and also $ V(C(G)) = V(G) \cup \{u_{i,j} \mid v_i v_j \in E(G) ~~for~some~  1 \leq i,j \leq n \} $. In the first, we prove that $ \gamma_p(C(G)) \neq 1 $. If $ \gamma_p(C(G)) = 1 $, then we have three states for $ \gamma_p $-set of $ C(G) $;\\
\textbf{State 1:} $ \gamma_p(C(G))=\{ u_{ij} \} $ for some $ 1 \leq i,j \leq n $.\\
\textbf{State 2:} $ \gamma_p(C(G))=\{ v_1 \} $.\\
\textbf{State 3:} $ \gamma_p(C(G))=\{ v_i \} $ such that $ i \neq 1 $.\\
In state 1, $ \{ v_i, v_j \} $ are monitored in d.s but  p.s can not occur because $ deg_{C(G)}(v_i)=deg_{C(G)}(v_j)=n \geq 4 $.\\
In state 2, $ \{ u_{12}, u_{13}, \cdots, u_{1n} \} $ are monitored in d.s and $ \{ v_2, \cdots , v_n \} $ are also monitored in p.s but p.s can not continue, because every vertex of $ \{ v_2,\cdots, v_n \} $ adjacent to two vertices of $ \{ u_{ij} \}_{i,j \neq 1} $.\\
In state 3, without losing generality let $ S=\{ v_2\} $. So $ \{u_{21}, u_{23}, u_{2n}, v_4, v_5, \cdots , v_{n-1} \} $ are monitored in d.s and $ \{ v_1, v_3 , v_n \} $ are monitored in p.s. The propagation can not continue because every vertex of $ \{ v_3, \cdots , v_n \} $ adjacent to at least two vertices of $ \{ u_{ij} \}_{i,j \neq 2} $.\\
It can easily be seen that $ \{ v_1 , v_i \} $($ 2 \leq i \leq n $) is a $ \gamma_p $-set for $ C(G) $. So $ \gamma_p(C(G))=2 $.
\end{proof}

\begin{theorem}
$ \gamma_p(C(K_n))=n-2 $
\end{theorem}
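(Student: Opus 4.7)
The plan is to establish matching upper and lower bounds of $n-2$. For the upper bound I would take $S=\{v_1,\ldots,v_{n-2}\}$ and verify directly that it is a power dominating set: after the domination step the only unmonitored vertices are $v_{n-1}$, $v_n$ and $u_{n-1,n}$, and then the chain of forces $u_{1,n-1}\to v_{n-1}$, $u_{1,n}\to v_n$, $v_{n-1}\to u_{n-1,n}$ completes the closure, so $\gamma_p(C(K_n))\le n-2$.

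For the lower bound, let $S$ be an arbitrary PD-set and write $A=S\cap V(K_n)$, $B=S\cap\{u_{ij}\}$, $a=|A|$, $b=|B|$. Recall that in $C(K_n)$ every $u_{ij}$ has degree $2$ with neighbors $v_i,v_j$, while every $v_i$ has degree $n-1$ with neighbors $\{u_{ij}\}_{j\ne i}$. The structural observation I would rely on is the following: whenever $A\ne\emptyset$, picking any $v_j\in A$, every $v_k\notin N[S]$ is forced in one propagation step via $u_{jk}$, and so after a single step every vertex of $V$ is monitored. From that moment onward, every subsequent force must have the form $v_i\to u_{ij}$; this corresponds exactly to a leaf-peeling step (removing an edge incident to a degree-one vertex) on the auxiliary graph $H$ with vertex set $V\setminus A$ and edge set $E(K_{V\setminus A})\setminus B'$, where $B'=B\cap E(K_{V\setminus A})$.

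Now, a graph can be reduced to the empty graph by iterated leaf-peeling if and only if it is a forest, so $S$ being a PD-set forces $H$ to be a forest on $n-a$ vertices. Thus $\binom{n-a}{2}-|B'|\le n-a-1$, which rearranges to $|B|\ge|B'|\ge\binom{n-a-1}{2}$, and hence $|S|\ge a+\binom{n-a-1}{2}$. A short check of the function $f(a):=a+\binom{n-a-1}{2}$ shows that its minimum over $0\le a\le n$ equals $n-2$, attained precisely at $a\in\{n-3,n-2\}$. For the remaining case $A=\emptyset$, no $u_{ij}\in B$ can ever force (both its neighbors already lie in $N[S]$), while a monitored vertex $v_k$ has $n-1-\deg_B(v_k)$ unmonitored neighbors, so propagation can begin only if $\deg_B(v_k)=n-2$ for some $k$, which already forces $|B|\ge n-2$. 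Combining both cases yields $|S|\ge n-2$, and therefore $\gamma_p(C(K_n))=n-2$.

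The most delicate step will be justifying the leaf-peeling equivalence rigorously: one must confirm that the first propagation step indeed monitors all of $V$ regardless of whether some $u_{ij}$'s are simultaneously forced, and that the final closure does not depend on the order in which forces are applied (this is just the standard confluence of the zero-forcing closure, but it has to be invoked cleanly to conclude that $H$ being a forest is both necessary and sufficient for closing the remaining $u$-vertices).
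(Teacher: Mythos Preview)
Your argument is correct and in fact considerably more rigorous than the paper's own proof. Both of you take the same upper-bound witness $S=\{v_1,\ldots,v_{n-2}\}$. For the lower bound, however, the paper simply asserts that among all sets of size $n-3$ ``the best state'' is $S=\{v_1,\ldots,v_{n-3}\}$ and then checks that this particular $S$ fails; no justification is given for why one may restrict to subsets of $V(K_n)$, nor for why mixed sets containing some subdivision vertices $u_{ij}$ cannot do better. Your proof closes this gap completely: by translating the residual forcing problem (after all $v_k$ are monitored) into a leaf-peeling process on the auxiliary graph $H$ on $V\setminus A$, you reduce the question to whether $H$ is a forest, and then a clean edge-count gives $|S|\ge a+\binom{n-a-1}{2}\ge n-2$ for every possible split $(a,b)$. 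The separate treatment of the case $A=\emptyset$ via the degree condition $\deg_B(v_k)=n-2$ is also correct and necessary. In short, the paper's route is shorter but leaves the ``best state'' claim unproved, while your forest/leaf-peeling reduction gives a genuine proof of the lower bound valid for arbitrary $S$.
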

\begin{proof}
Let $ G=K_n $ and $ V(G) = \{ v_1, \cdots , v_n \} $ and also $ V(C(G)) = V(G) \cup \{u_{i,j} \mid v_i v_j \in E(G) , 1 \leq i,j \leq n \} $. Suppose that $ S $ is a $ \gamma_p $-set of $ C(G)) $, we prove that if $ \mid S \mid=n-3 $, then all vertices of $ C(G)) $ are not monitored. In the best state, $ S $ can be considered as $ \{ v_1, \cdots , v_{n-3} \} $. In d.s, $ \{ u_{ij} \}_{i,j} $(such that $ i,j \notin \{ n, n-1, n-2 \} $) are monitored and in p.d, all vertices of $ C(G) $ are monitored except $ \{ u_{(n-2)n}, u_{(n-2)(n-1)}, u_{(n-1)n} \} $. It is clear that the propagation can not continue, so $ \mid S \mid > n-3 $.
In the other hand, If $ S=\{ v_1, \cdots, v_{n-2} \} $, then all vertices of $ C(G) $ are monitored in d.s and p.s.
\end{proof}
\section{Middle graphs}

\begin{defi}[\cite{thpr17}]
The middle graph $M(G)$ of a graph $G$ whose vertex set is $ V(G) \cup E(G) $ where two vertices are adjacent if and only if they are either adjacent edges of $G$ or one is a vertex and the other is an edge incident with it. 
\end{defi}

\begin{theorem}[\cite{thpr17}]
$ \gamma_p(M(P_n))=\lceil \frac{n-1}{3} \rceil $.
\end{theorem}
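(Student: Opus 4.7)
The plan is to prove the upper and lower bounds separately: the former by an explicit construction, and the latter by a structural observation pinning down which vertices can and cannot be forced.

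For the upper bound $\gamma_p(M(P_n)) \le \lceil (n-1)/3 \rceil$, I will write $n - 1 = 3q + r$ with $r \in \{0, 1, 2\}$ and take
\[
S \;=\; \{e_{3i-1} : 1 \le i \le q\} \cup B_r,
\]
where $B_0 = \emptyset$, $B_1 = \{v_n\}$, and $B_2 = \{e_{n-1}\}$, so that $|S| = \lceil (n-1)/3 \rceil$. To verify $S$ is a PD-set, I will check three types of forces after the domination step: (i) the boundary edge $e_1$ (of degree $3$) has its neighbors $v_2, e_2$ in $N[e_2] \subseteq N[S]$ and thus forces $v_1$; (ii) for $1 \le i \le q - 1$, the bridging edge $e_{3i}$ has three of its four neighbors $v_{3i}, v_{3i+1}, e_{3i-1}, e_{3i+1}$ in $N[S] \supseteq N[e_{3i-1}] \cup N[e_{3i+2}]$, so it forces the missing vertex $v_{3i+1}$; and (iii) the right boundary is handled by case analysis on $r$, each case yielding one or two final forces that blacken the remaining rightmost $v$-vertices.

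For the lower bound $\gamma_p(M(P_n)) \ge \lceil (n-1)/3 \rceil$, the heart of the argument is the structural claim that for any PD-set $S$, every edge-vertex $e_k$ (with $1 \le k \le n-1$) must already lie in $N[S]$; in other words, no $e_k$ can become black through a forcing step. Once the claim is granted, a direct count finishes the proof, since each $u \in S$ contributes at most three edge-vertices to $N[S]$ (exactly three when $u = e_i$ is an interior edge-vertex, namely $e_{i-1}, e_i, e_{i+1}$; at most two when $u$ is any $v$-vertex), whence
\[
n - 1 \;=\; \bigl|\{e_1, \ldots, e_{n-1}\}\bigr| \;\le\; \bigl|N[S] \cap \{e_1, \ldots, e_{n-1}\}\bigr| \;\le\; 3|S|,
\]
and hence $|S| \ge \lceil (n-1)/3 \rceil$.

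To establish the claim, I will first prove a short sub-claim: throughout the propagation, if $v_j$ is black at some moment, then every edge-vertex incident to $v_j$ is also black at that moment. This follows by case analysis on how $v_j$ first becomes black: if $v_j \in N[S]$, any PMU responsible for it (one of $v_j, e_{j-1}, e_j$) automatically drags both incident edges into $N[S]$; if instead $v_j$ is newly forced by an incident edge, say $e_{j-1}$, then the forcing condition on $e_{j-1}$ demands that its other neighbors $v_{j-1}, e_{j-2}, e_j$ be black, so in particular the other incident edge $e_j$ is already black. Given the sub-claim, I then rule out every potential forcer of $e_k$, namely any $u \in N(e_k) = \{v_k, v_{k+1}, e_{k-1}, e_{k+1}\}$: if $u \in \{v_k, v_{k+1}\}$, the sub-claim applied at index $k$ or $k+1$ immediately forces $e_k$ to already be black, a contradiction; and if $u \in \{e_{k-1}, e_{k+1}\}$, the forcing condition on $u$ requires $v_k$ or $v_{k+1}$ to be black, which by the sub-claim again makes $e_k$ black. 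The main obstacle is carrying out the sub-claim's case analysis cleanly at the boundary vertices $v_1, v_n$ and $e_1, e_{n-1}$, but each such case reduces to a direct inspection of the small neighborhood structure of $M(P_n)$.
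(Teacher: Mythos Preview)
The paper does not prove this theorem; it is merely quoted from \cite{thpr17} without argument, so there is no in-paper proof to compare against. Your proposal is a complete and correct proof. The upper-bound construction with $S=\{e_{3i-1}\}\cup B_r$ works exactly as you describe (the boundary force $e_1\to v_1$, the interior forces $e_{3i}\to v_{3i+1}$, and the right-boundary case analysis on $r$ all check out). The lower bound is the more interesting part: your sub-claim that blackening $v_j$ forces both incident edge-vertices to already be black is the key structural observation, and your case analysis (domination step versus forcing step, with attention to the boundary indices) is sound. From it the main claim ``every $e_k$ lies in $N[S]$'' follows immediately, and the counting $n-1\le 3|S|$ gives $|S|\ge\lceil (n-1)/3\rceil$. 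The only cosmetic point is that you should state explicitly that the sub-claim is proved by induction on the time step at which $v_j$ first turns black, so that when you invoke it inside the proof of the main claim (at the moment some neighbor is about to force $e_k$) the sub-claim legitimately applies to the earlier-black vertex $v_k$ or $v_{k+1}$.
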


\begin{theorem}[\cite{thpr17}]
$ \gamma_p(M(C_n))=\lceil \frac{n}{3} \rceil  $.
\end{theorem}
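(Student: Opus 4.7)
The plan is to prove both $\gamma_p(M(C_n))\le \lceil n/3 \rceil$ and $\gamma_p(M(C_n)) \ge \lceil n/3 \rceil$. Let $V(C_n)=\{v_1,\dots,v_n\}$ with indices modulo $n$ and $e_i=v_iv_{i+1}$. In $M(C_n)$, vertex $v_i$ has neighbours $\{e_{i-1},e_i\}$, each $e_i$ has neighbours $\{v_i,v_{i+1},e_{i-1},e_{i+1}\}$, and $T_i:=\{v_i,e_{i-1},e_i\}$ spans a triangle; the $n$ triangles cover $V(M(C_n))$ with $e_i\in T_i\cap T_{i+1}$.

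For the upper bound I would take $S=\{e_{3j+1}:0\le j\le \lceil n/3\rceil-1\}$, with the obvious shift of the last index when $n\not\equiv 0\pmod 3$. Consecutive chosen edge-vertices are at distance $3$ on the cycle $e_1e_2\cdots e_n$, so already in the domination step every edge-vertex lies in $N[S]$, together with every $v_i$ whose index $i$ is $\not\equiv 0\pmod 3$; each remaining $v_{3j}$ is then forced by $e_{3j-1}$, whose three other neighbours $\{v_{3j-1},e_{3j-2},e_{3j}\}$ are already monitored.

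For the lower bound, let $S$ be any PD-set; the target is $3|S|\ge n$. The plan is to give each $s\in S$ a budget of $3$ credits and show that, at the end of the propagation, each of the $n$ triangles $T_1,\dots,T_n$ has been charged at least once. I would split the analysis by the type of $s$: if $s=v_j$, the domination step fully monitors only the triangle $T_j$; if $s=e_j$, it fully monitors $T_j\cup T_{j+1}$ and dominates one vertex of each of $T_{j-1}$ and $T_{j+2}$. The key structural lemma is that any force on the $e$-cycle, either $e_i\to e_{i+1}$ or $e_i\to v_{i+1}$, requires three of the four neighbours of $e_i$ to already be monitored, so each propagation step advances the fully-monitored region by at most one triangle before needing a freshly monitored vertex supplied from elsewhere. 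Tracing the chronological list of forces, I would attribute each triangle to the earliest $s\in S$ whose initial closure grows to contain it; the bottleneck just described bounds the number of triangles attributed to any single $s$ by $3$.

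The main obstacle is making this attribution scheme rigorous. Propagation in $M(C_n)$ is genuinely two-dimensional: forcing along the $e$-cycle needs previously monitored $v$-vertices, and forcing to a $v$-vertex needs the appropriate $e$-vertices already monitored, so the two processes feed each other. I expect the bookkeeping to reduce to a short case check showing that between any two elements of $S$ at most three triangles can become fully monitored without a further injection from $S$, after which the arithmetic $n\le 3|S|$, i.e.\ $|S|\ge \lceil n/3\rceil$, is immediate.
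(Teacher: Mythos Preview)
The paper does not prove this theorem; it quotes it from \cite{thpr17} without argument, so there is no ``paper's own proof'' to compare against. I can still comment on your plan.

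Your upper bound is fine. A slightly slicker route, already implicit in the paper, is to use $\gamma_p(M(G))\le \gamma'(G)$ (the paper's Theorem on edge domination) together with the standard fact $\gamma'(C_n)=\lceil n/3\rceil$; this avoids the case split on $n\bmod 3$.

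Your lower bound is only a plan, and you say so yourself. The credit/triangle idea is reasonable, but the sentence ``each propagation step advances the fully-monitored region by at most one triangle before needing a freshly monitored vertex supplied from elsewhere'' is exactly the lemma that needs proving, not something you can assert. The subtle point is that propagation at the two ends of a monitored arc is not independent: once two arcs coming from different seeds merge, a single force can simultaneously complete a triangle that neither seed could complete alone, and your attribution rule (``earliest $s$ whose closure grows to contain it'') does not obviously respect the budget in that situation. A cleaner way to organise the same idea is to argue directly on the $e$-cycle: show that after the domination step the set of unmonitored edge-vertices is a union of arcs, and that an arc of length $\ell\ge 1$ can shrink only from an end where the adjacent $v$-vertex is already monitored; then trace back where those boundary $v$-vertices came from. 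Either way, the ``short case check'' you anticipate is the whole content of the lower bound and still has to be written out.
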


\begin{theorem}[\cite{thpr17}]
$ \gamma_p(M(K_{1,n}))=1 $
\end{theorem}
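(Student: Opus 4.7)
The plan is to exhibit an explicit power dominating set of size one, namely the center of the star. First I set up notation: write $V(K_{1,n}) = \{v_0,v_1,\ldots,v_n\}$ with $v_0$ the center and $v_1,\ldots,v_n$ the leaves, and let $e_i = v_0 v_i$ denote the edges, so that $V(M(K_{1,n})) = \{v_0,v_1,\ldots,v_n\} \cup \{e_1,\ldots,e_n\}$.

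Next I read off the adjacencies of $M(K_{1,n})$ directly from the definition. Since all $n$ edges $e_1,\ldots,e_n$ share the endpoint $v_0$, they are pairwise adjacent in $G$, and therefore induce a clique $K_n$ in $M(K_{1,n})$. The vertex $v_0$ is incident with every $e_i$, so it is adjacent in $M(K_{1,n})$ to each $e_i$. Finally, each leaf $v_i$ is incident in $G$ only with $e_i$, so in $M(K_{1,n})$ the leaf $v_i$ has the single neighbor $e_i$.

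I claim $S = \{v_0\}$ is a PD-set. The domination step yields
$$S_1 = N_{M(K_{1,n})}[v_0] = \{v_0,e_1,\ldots,e_n\}.$$
For the propagation step, fix any index $i$ and consider $e_i$: its neighbors in $M(K_{1,n})$ are $v_0$, the other edges $e_j$ ($j\neq i$), and $v_i$. All of these except $v_i$ already lie in $S_1$, so $e_i$ has exactly one unmonitored neighbor, and therefore $e_i \rightarrow v_i$. Applying this for every $i\in\{1,\ldots,n\}$ gives $S_2 = V(M(K_{1,n}))$, so $\{v_0\}$ is a power dominating set. Combined with the trivial bound $\gamma_p \geq 1$, this proves $\gamma_p(M(K_{1,n})) = 1$.

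There is essentially no obstacle here: the only thing to be careful about is parsing the definition of the middle graph correctly so as to see that each leaf $v_i$ becomes a pendant vertex of $M(K_{1,n})$ hanging off $e_i$, which is exactly what lets a single domination step followed by $n$ simultaneous forces sweep the whole graph.
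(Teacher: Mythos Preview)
Your argument is correct: you have the adjacencies of $M(K_{1,n})$ right, the domination step from $v_0$ picks up all the edge-vertices $e_i$, and then each $e_i$ forces its pendant leaf $v_i$. Note that the paper does not supply its own proof of this statement---it is quoted from \cite{thpr17}---so there is nothing to compare against directly; your explicit verification is exactly the natural one, and it also falls out of the paper's later Theorem~\ref{m=1} since every edge of $K_{1,n}$ is universal.
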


\begin{defi}[\cite{vapa14}]
Let $ G= (V, E) $ be a graph. A subset $ F \subseteq E $ is an edge dominating set if each edge in $ E $ is either in $ F $ or is adjacent to an edge in $ F $. An edge dominating set $ F $ is called a minimal edge dominating set (or MEDS) if no proper subset $ F' $ of $ F $ is an edge dominating set. The edge domination number $ \gamma^{'}(G) $ is the minimum cardinality among all minimal edge dominating sets.
\end{defi}

\begin{theorem}\label{bbbb}
For any graph $ G $, $ \gamma_p(M(G)) \leq \gamma^{'}(G). $ 
\end{theorem}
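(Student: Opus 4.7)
The plan is to take a minimum edge dominating set $F \subseteq E(G)$ and show that $F$, viewed as a subset of $V(M(G))$ via the natural identification of each edge of $G$ with its corresponding edge-type vertex of $M(G)$, is already a power dominating set of $M(G)$. Since $|F|=\gamma'(G)$, this yields $\gamma_p(M(G)) \leq \gamma'(G)$ directly.

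First I would describe the domination step $N_{M(G)}[F]$. For each $e=xy \in F$, its closed neighborhood in $M(G)$ consists of $e$ itself, the two vertex-type neighbors $x$ and $y$, and every edge-type vertex coming from an edge of $G$ that shares an endpoint with $e$. Since $F$ edge-dominates $G$, every edge of $G$ either lies in $F$ or is adjacent in $G$ to some edge of $F$, so after the domination step every edge-type vertex of $M(G)$ is monitored. The vertex-type vertices monitored at this stage are precisely the endpoints in $G$ of edges of $F$.

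It remains to propagate to each vertex-type vertex $v$ that is not incident in $G$ to any edge of $F$. The key observation is that for every edge $e=uv$ of $G$ at such a $v$, the other endpoint $u$ is already monitored: since $v$ is not incident to $F$ we have $e \notin F$, but $F$ being edge-dominating gives some $e' \in F$ adjacent to $e$, and since $v$ is not an endpoint of $e'$, the shared vertex must be $u$. Hence $u$ was covered during the domination step. Now in $M(G)$ the neighbors of $e$ are $\{u,v\}$ together with the edge-type vertices corresponding to edges adjacent to $e$ in $G$, and all of these are monitored except $v$; therefore $e \rightarrow v$ in a single propagation step. Performing this force for every such $v$ closes out $V(M(G))$, so $F$ is a power dominating set of $M(G)$.

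I do not anticipate a substantial obstacle; the argument is essentially a direct unpacking of the two definitions, and the only subtle point is the above observation linking the edge-domination property to the propagation step. Degenerate cases with no edges are tacitly excluded by the connectedness assumption and by the triviality of the statement when $E(G)=\emptyset$ is not permitted.
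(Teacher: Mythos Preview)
Your proof is correct and follows essentially the same approach as the paper: take a minimum edge dominating set $F$ of $G$, observe that the domination step from $F$ in $M(G)$ monitors all edge-type vertices together with the endpoints of $F$, and then force each remaining vertex $v$ via any incident edge $e=uv$, whose only unmonitored neighbor in $M(G)$ is $v$. Your write-up is in fact more careful than the paper's, since you make explicit why the other endpoint $u$ of $e$ must already be monitored (namely, the edge of $F$ dominating $e$ must meet $e$ at $u$, not at $v$).
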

\begin{proof}
Let $ G $ be a graph and $ V(G)=\{ v_1, \cdots, v_n \} $ and $ E(G)=\{ u_{ij} \mid v_i ~ and~ v_j~ are~ adjacent \} $. Without losing generality, suppose that $ u_{12} $ is in $ \gamma^{'} $-set of $ G $ and it dominate $ \{ u_{1i_{1}}, \cdots, u_{1i_{s}}, u_{2j_{1}}, \cdots, u_{2j_{k}} \} $ in $ G $. So if $ u_{12} $ be in $ \gamma_p $-set of $ M(G) $, then it monitore $ \{ u_{1i_{1}}, \cdots, u_{1i_{s}}, u_{2j_{1}}, \cdots, u_{2j_{k}} \} \cup \{ v_1,v_2 \} $ in d.s and $ \{ v_{i_{1}}, \cdots, v_{i_{s}}, v_{j_{1}}, \cdots, v_{j_{k}} \} $ are monitored in p.s. Therefore if all vertices of $ \gamma^{'} $-set be as a $ \gamma_p $-set of $ M(G) $, then all vertices of $ M(G) $ are monitored. So $ \gamma_p(M(G)) \leq \gamma^{'}(G) $.
\end{proof}

\begin{theorem}\label{m=1}
For any graph $ G $, $ \gamma_p(M(G)) =1 $ if and only if $ G $ has one universal edge(the edge that is adjacent to all other edges).
\end{theorem}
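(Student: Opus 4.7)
My plan is to prove both directions of the biconditional; in each case the main technical task is to keep track of how many white $M(G)$-neighbors each black vertex possesses during the propagation.

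For the \emph{if} direction, I would assume $e=uv$ is a universal edge of $G$ and exhibit $\{e\}$ as a power dominating set of $M(G)$. The domination step places $e$, its two endpoints $u$ and $v$, and every edge of $G$ adjacent to $e$ into the black set; universality ensures that this already covers all of $E(G)$, leaving only $V(G)\setminus\{u,v\}$ white. For each such $w$, connectedness produces an edge $f$ of $G$ incident with $w$, and universality of $e$ forces $f$ to share an endpoint with $e$, so $f=xw$ for some $x\in\{u,v\}$. In $M(G)$ the neighbors of $f$ are $x$, $w$, and all edges of $G$ adjacent to $f$---every one of them already black except $w$---so $f\rightarrow w$ and the propagation finishes.

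For the \emph{only if} direction, I would take a PD-set $\{s\}$ and split into two cases according to whether $s\in V(G)$ or $s\in E(G)$, showing in each case that $G$ must possess a universal edge.

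If $s=v\in V(G)$, the initial black set is $\{v\}\cup E_v$, where $E_v$ denotes the edges of $G$ incident with $v$. The vertex $v$ has $E_v$ as its full $M(G)$-neighborhood, hence no white neighbors and no capacity to force; a direct count shows that an edge $f=va\in E_v$ has precisely $\deg_G(a)$ white $M(G)$-neighbors (the vertex $a$ together with the $\deg_G(a)-1$ edges $ab$ for $b\in N_G(a)\setminus\{v\}$). Therefore the only possible forces are $va\rightarrow a$ with $a$ a leaf of $G$, and each such newly black leaf has its unique $M(G)$-neighbor $va$ already black, so no further forces occur. For $\{v\}$ to be a PD-set, one concludes $E(G)=E_v$ and $V(G)\setminus\{v\}$ consists of leaves adjacent to $v$; $G$ is then a star with center $v$, and every edge is universal.

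If $s=e=uv\in E(G)$, write $E_{uv}$ for the set of edges of $G$ incident with $u$ or $v$; the initial black set is $\{u,v,e\}\cup E_{uv}$. A parallel count shows that the only possible initial forces are $uw\rightarrow w$ (or $vw\rightarrow w$) where $w\notin\{u,v\}$ satisfies $N_G(w)\subseteq\{u,v\}$. The main obstacle, and the heart of this case, is to rule out any later force that would turn black an edge $f=xy$ with $x,y\notin\{u,v\}$. I would argue this in two short sub-steps: first, a vertex $w\notin\{u,v\}$ can ever become black only when $N_G(w)\subseteq\{u,v\}$, which keeps both $x$ and $y$ white throughout; second, any black edge $g$ adjacent to $f$ must lie in $E_{uv}$, and such a $g$ already has a still-white endpoint in $\{x,y\}$ in addition to $f$ itself, giving $g$ at least two white neighbors. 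Hence no edge outside $E_{uv}$ is ever forced, so for $\{e\}$ to monitor $M(G)$ one needs $E(G)=E_{uv}$---precisely the statement that $e$ is universal in $G$.
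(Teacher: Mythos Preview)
Your proposal is correct and follows essentially the same approach as the paper: the same case split on whether the singleton PD-set is an original vertex or an edge-vertex of $M(G)$, with the only-if direction handled by tracking which $M(G)$-vertices can ever be forced. Your invariant-based treatment of the edge case (black non-$\{u,v\}$ vertices $w$ satisfy $N_G(w)\subseteq\{u,v\}$; black edges stay in $E_{uv}$) is in fact more carefully stated than the paper's, which only checks a couple of specific configurations of an edge $v_iv_j$ disjoint from $\{v_1,v_2\}$; for the if-direction you argue directly rather than invoking the bound $\gamma_p(M(G))\le\gamma'(G)$.
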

\begin{proof}
Let $ G $ is a graph and $ V(G)=\{ v_1, \cdots, v_n \} $ and $ E(G)=\{ u_{ij} \mid v_i ~ and~ v_j~ are~ adjacent \} $.\\
$ (\Leftarrow) $ If $ G $ has one universal edge i.e. $ \gamma^{'}=1 $, then according to theorem \ref{bbbb}, $ \gamma_p(M(G)) =1 $.\\
$ (\Rightarrow) $ Let $ \gamma_p(M(G)) =\{ v \} $. We have two state;\\
\textbf{State 1:} $ v \in V(G) $.\\
\textbf{State 1:} $ v \in E(G) $.\\
In state 1, without losing generality, let $ v=v_1 $ and $ N_G(v_1)=\{ v_{j_1}, \cdots, v_{j_l} \} $. So $ \{ u_{1j_1}, \cdots, u_{1j_l} \} $ are monitored in d.s but propagation can occur unless all vertices of $ N_G(v_1) $ have order 1. So in this state, $ G $ can only be a $ K_{1,n} $ and we know $  K_{1,n} $ has the universal edge.\\
In state 2, without losing generality, let $ v=u_{12} $. (proof by contradiction) Suppose that $ v_1 v_2 $ is not an universal edge in $ G $. So there is $ v_i v_j \in E(G) $ such that $ i , j \notin \{ 1 , 2 \} $. If $  v_i v_j $ be a leaf and $ v_j $ adjacent to $ v_1(or~ v_2) $ in $ G $, then $ u_{1j} $ adjacent to $ \{ u_{12}, u_{ij}, v_1, v_j \} $ in $ M(G) $. So propagation can not occur from $ u_{1j} $. Then $ v_j $ and $ u_{i,j} $ and $ v_i $ are not monitored. Thus in this case, $ \gamma_p(M(G)) \neq 1 $ and it is a contradiction.\\
If $v_i$ and $v_j$ are adjacen to $ v_1(or v_2) $ in $ G $, then  $v_i, v_j$ and $ u_{ij} $ cannot monitored in $ M(G) $. So  $ \gamma_p(M(G)) \neq 1 $ and it is a contradiction. Therefore $ v_1 v_2 $ is a universal edge and $ \gamma^{'}(G)=1 $.
\end{proof}

\begin{theorem}[\cite{AS18}]\label{W=12}
$ \gamma^{'}(M(W_n)) = 1 + \lceil \frac{n-3}{3} \rceil $.
\end{theorem}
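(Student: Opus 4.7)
The plan is to establish the identity by proving matching upper and lower bounds for $\gamma'(M(W_n))$. Label the hub of $W_n$ as $v_1$ and the rim vertices as $v_2,\ldots,v_n$ arranged in a cycle. In $M(W_n)$, the edge-vertices $u_{1,i}$ corresponding to the spokes form a clique (any two spokes share $v_1$), each such $u_{1,i}$ is adjacent to $v_1$, to $v_i$, and to the two neighboring rim edge-vertices $u_{i-1,i}$ and $u_{i,i+1}$, while the rim edge-vertices $u_{i,i+1}$ have degree six. This structure is what the construction and the counting argument both exploit.

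For the upper bound, I would exhibit an explicit edge dominating set of the target size. First select one spoke-type edge of $M(W_n)$, for instance $v_1u_{1,2}$. Because $u_{1,2}$ is adjacent in $M(W_n)$ to every other spoke edge-vertex and to $v_1$, this single edge dominates every edge of $M(W_n)$ incident to $v_1$, to $u_{1,2}$, or to any other $u_{1,j}$, and also dominates the two rim-incidence edges at $v_2$. What remains uncovered is an arc-like ``band'' associated with the rim edges $v_3v_4,v_4v_5,\ldots,v_{n-1}v_n$ and their incidences. This band has the structure of an induced path of length $n-3$ (in the relevant subgraph of $M(W_n)$), and selecting every third edge along it contributes the remaining $\lceil (n-3)/3\rceil$ edges, yielding $1+\lceil (n-3)/3\rceil$ in total.

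For the lower bound, I would argue in two stages. First, any edge dominating set of $M(W_n)$ must contain at least one edge that touches the hub region, since edges of $M(W_n)$ incident with $v_1$ cannot be dominated by any purely rim-based edge. Second, partition the rim-band edges into disjoint blocks of three consecutive edges; because each rim edge-vertex has degree only six and its neighborhood in $M(W_n)$ reaches only to the two immediately adjacent rim positions, no single edge of $M(W_n)$ can dominate members of two different blocks. Hence covering the rim band alone requires $\lceil (n-3)/3\rceil$ edges, and a small case analysis (checking that a ``hub'' edge can share its contribution with at most one rim block) forces the additive $+1$.

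The main obstacle will be the lower bound. Getting a clean disjoint partition of the rim band into triples that cannot be simultaneously dominated by a single edge requires precise bookkeeping around the joins between spoke edge-vertices and rim edge-vertices, since spoke edge-vertices have higher degree and can reach across more of the rim than a purely rim-local argument would suggest. Once the partition is constructed and the boundary effects at the ``cut'' around $v_2$ are handled, the upper and lower bounds meet and the formula follows.
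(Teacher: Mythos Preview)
The paper does not prove this statement; it is quoted from \cite{AS18} with no argument. More importantly, the printed statement is almost certainly a typo: the very next theorem invokes Theorem~\ref{bbbb}, namely $\gamma_p(M(G))\le\gamma'(G)$, to deduce $\gamma_p(M(W_n))\le 1+\lceil(n-3)/3\rceil$, and that inference requires $\gamma'(W_n)$, not $\gamma'(M(W_n))$. The intended result is $\gamma'(W_n)=1+\lceil(n-3)/3\rceil$, whose proof is elementary: one spoke dominates all spokes and the two rim edges at its rim endpoint, and the remaining $n-3$ rim edges form a path (in the line-graph sense) requiring $\lceil(n-3)/3\rceil$ further edges; the lower bound follows from the same path after observing that at least one edge must meet the hub.

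Your proposal takes the printed statement literally and tries to compute $\gamma'(M(W_n))$, and the upper-bound step is wrong. You assert that the single edge $v_1u_{1,2}$ of $M(W_n)$ ``dominates every edge of $M(W_n)$ incident to $v_1$, to $u_{1,2}$, or to any other $u_{1,j}$''. But edge domination means sharing an endpoint, and $v_1u_{1,2}$ shares no endpoint with, for instance, $u_{1,3}u_{1,4}$ or $u_{1,3}v_3$; so it certainly does not dominate all edges at the other spoke edge-vertices. In fact the literal statement is false for large $n$: the set $\{v_1,u_{1,2},\ldots,u_{1,n}\}$ induces a clique $K_n$ in $M(W_n)$, and for an edge set $F$ to dominate every edge of this clique, the clique vertices touched by $F$ must form a vertex cover of $K_n$, hence have size at least $n-1$; since each edge of $F$ contributes at most two such vertices, $\gamma'(M(W_n))\ge\lceil(n-1)/2\rceil$. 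For $n\ge 8$ this already exceeds $1+\lceil(n-3)/3\rceil$. So the target you are aiming at cannot be hit, and the argument should be redirected to $\gamma'(W_n)$.
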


\begin{theorem}
$ \gamma_p(M(W_n)) = 1 + \lceil \frac{n-3}{3} \rceil $.
\end{theorem}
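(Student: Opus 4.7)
The plan is to prove the two inequalities $\gamma_p(M(W_n))\le 1+\lceil (n-3)/3\rceil$ and $\gamma_p(M(W_n))\ge 1+\lceil (n-3)/3\rceil$ separately. Throughout, let $v_1$ be the hub and $v_2,\ldots,v_n$ the cycle vertices of $W_n$, write $u_{1,i}$ for a hub edge and $u_{i,i+1}$ (indices taken modulo the rim) for a cycle edge, and use the fact that in $M(W_n)$ each cycle vertex has degree $3$, each hub edge has degree $n+2$, each cycle edge has degree $6$, and $\{v_1\}\cup\{u_{1,2},\ldots,u_{1,n}\}$ induces a clique $K_n$.

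For the upper bound, I would apply Theorem~\ref{bbbb} with $G=W_n$, using the edge dominating set of $W_n$ produced in the spirit of Theorem~\ref{W=12}. Explicitly, take $F=\{u_{1,2}\}\cup T$, where $T$ consists of every third edge along the sub-path $u_{3,4},u_{4,5},\ldots,u_{n-1,n}$, so that $|T|=\lceil (n-3)/3\rceil$. The edge $u_{1,2}$ dominates (in $W_n$) every hub edge together with the two cycle edges incident to $v_2$, and $T$ dominates the remaining cycle edges; hence $F$ is an edge dominating set of $W_n$ of size $1+\lceil (n-3)/3\rceil$. Viewing $F$ as a subset of $V(M(W_n))$, a direct check using the formula for $N_{M(W_n)}[e]$ shows that after the domination step the only white vertices are the cycle vertices $v_j$ not adjacent to any edge of $F$; for each such $v_j$, the hub edge $u_{1,j}$ has $v_j$ as its unique white neighbour (its remaining neighbours being $v_1$, the other hub edges, and $u_{j-1,j},u_{j,j+1}$, all black), so $u_{1,j}\to v_j$ completes the propagation.

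For the lower bound, let $S$ be a PD-set with $|S|=k$. The first step would be to show that at least one element of $S$ must be ``charged'' to the hub. Since $N_{M(W_n)}(v_1)=\{u_{1,2},\ldots,u_{1,n}\}$, the vertex $v_1$ can enter $N[S]$ only if $v_1\in S$ or some hub edge is in $S$; otherwise $v_1$ would have to be forced, which requires $n-2$ of its $n-1$ hub-edge neighbours to already be black, and a short case analysis rules this out for $n\ge 4$ without at least one hub-anchored element. The second step would be to adapt the proof of $\gamma_p(M(C_m))=\lceil m/3\rceil$ to the cycle-edge part of $W_n$: once the hub element covers at most two consecutive cycle edges via its cycle-endpoint, the remaining $n-3$ cycle edges form a path in the induced copy of $M(C_{n-1})$ on $\{v_2,\ldots,v_n\}\cup\{u_{2,3},\ldots,u_{n,2}\}$, and a forcing-chain argument shows that each additional element of $S$ can advance monitoring along this path by at most three cycle edges, because every cycle vertex has exactly three neighbours in $M(W_n)$. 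Combining the two steps yields $k-1\ge \lceil (n-3)/3\rceil$, i.e.\ $k\ge 1+\lceil (n-3)/3\rceil$.

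The main obstacle is controlling the interaction through the hub clique $K_n$ in the lower bound: in principle, once $v_1$ and all hub edges are black, the hub clique could fire across to distant cycle edges and appear to shortcut the ``three edges per seed'' rate. The key observation to close this loophole is that a hub edge $u_{1,j}$ can force a vertex outside the clique only if all $n-1$ of its intra-clique neighbours and both incident cycle edges $u_{j-1,j},u_{j,j+1}$ are already black, leaving $v_j$ as the unique white neighbour; thus each hub-mediated force delivers at most a single already-neighbouring cycle vertex, and produces no genuinely new cycle edge. Hence the hub clique contributes nothing beyond the ``$+1$'' already counted, and the $\lceil (n-3)/3\rceil$ bound on the cycle-edge path is preserved, giving the required equality.
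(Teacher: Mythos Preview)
Your upper bound matches the paper's exactly: both invoke Theorem~\ref{bbbb} together with $\gamma'(W_n)=1+\lceil(n-3)/3\rceil$ from Theorem~\ref{W=12}, and your explicit edge-dominating set $F=\{u_{1,2}\}\cup T$ is a correct realisation of that bound.

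For the lower bound, your overall strategy coincides with the paper's---one element of a minimum PD-set must be the hub $v_1$ or a hub edge, and the rest must handle the cycle structure---but the executions diverge. The paper splits into two cases: if $v_1\in S$ it argues that hub edges never force usefully, so $S\setminus\{v_1\}$ must power-dominate the induced copy of $M(C_{n-1})$, giving $|S|\ge 1+\lceil(n-1)/3\rceil\ge 1+\lceil(n-3)/3\rceil$; if some $u_{1,i}\in S$, the same ``hub edges are redundant'' observation reduces the rest to the induced $M(P_{n-2})$, giving $|S|\ge 1+\gamma_p(M(P_{n-2}))=1+\lceil(n-3)/3\rceil$. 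This buys the paper a clean invocation of the already-stated values of $\gamma_p(M(C_m))$ and $\gamma_p(M(P_m))$. Your direct ``three cycle edges per seed'' forcing-chain count is essentially a re-derivation of the lower bound for $\gamma_p(M(P_{n-2}))$ from scratch, and as written it remains a sketch: you would still need to formalise why no seed and no subsequent propagation can push the count past three per non-hub element, and you treat only the hub-edge case, not $v_1\in S$. Your key observation that a hub edge $u_{1,j}$ can only force a vertex in $\{v_j,u_{j-1,j},u_{j,j+1}\}$ (not just $v_j$, as you state---it could also force one of the two incident cycle edges) is precisely the ``hub edges are redundant for the cycle'' fact the paper uses, so the insight is identical; only the packaging differs. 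Both proofs share the unjustified assertion that a \emph{minimum} PD-set must contain $v_1$ or a hub edge; neither actually proves it.
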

\begin{proof}
According to theorem \ref{bbbb} and theorem \ref{W=12}, $ \gamma_p(M(G)) \leq \gamma^{'}(G) $ and $ \gamma^{'}(M(W_n)) = 1 + \lceil \frac{n-3}{3} \rceil $, so $ \gamma_p(M(W_n)) \leq 1 + \lceil \frac{n-3}{3} \rceil $. If $ v_1 $ be a universal vertex of $ W_n $, then it's clear that to be the smallest size of $ PD $-set, $ v_1 $ or one of $ \{ u_{12}, u_{13}, \cdots , u_{1n} \} $ should be in $ PD $-set. Now if $ v_1 $ be in $ PD $-set, then propagation can not occur from any of $ \{ u_{12}, u_{13}, \cdots , u_{1n} \} $, because for example $ u_{12} $ is adjacent to $ \{ v_2, u_{2n}, u_{23} \} $ and this case can not occur that two of these three vertices are monitored and for monitore the remaining vertex, it is needed that propagation occur from $ u_{12} $. So $ V(M(W_n)) \setminus \{ v_1, u_{12}, u_{13}, \cdots , u_{1n} \} $ induce a $ M(C_{n-1}) $ and we have $ \gamma_p(M(W_n)) = 1 + \lceil \frac{n-1}{3} \rceil $ but $ 1 + \lceil \frac{n-3}{3} \rceil \leq 1 + \lceil \frac{n-1}{3} \rceil $ and it is a contradiction. Now, suppose that one of $  \{ u_{12}, u_{13}, \cdots , u_{1n} \} $ be in $ PD $-set. Without losing generality, let $ u_{12} $ be in $ PD $-set. Thus $ \{ u_{13}, \cdots , u_{1n} \} \cup \{ v_2, u_{2n}, u_{23} \} $ are monitored in d.s and like previous argument, propagation can not occur from any of $  \{ u_{13}, \cdots , u_{1n} \}  $. In other hand, $ V(M(W_n)) \setminus \{ v_1, v_2, u_{2n}, u_{23}, u_{12}, u_{13}, \cdots , u_{1n} \} $ induce a $ M(P_{n-2}) $ and we know $ \gamma_p(M(P_{n-2}))=\lceil \frac{n-3}{3} \rceil $, so $ \gamma_p(M(W_n)) = 1 + \lceil \frac{n-3}{3} \rceil $.
\end{proof}

\begin{theorem}[\cite{J87}]
$ \gamma^{'}(M(K_{h,n-h})) = min \{ h, n-h \} $.
\end{theorem}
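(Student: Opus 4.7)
Label the parts of $K_{h,n-h}$ as $A=\{a_1,\ldots,a_h\}$ and $B=\{b_1,\ldots,b_{n-h}\}$, and write $e_{ij}$ for the vertex of $M(K_{h,n-h})$ corresponding to the edge $a_ib_j$ of $K_{h,n-h}$. Assume WLOG $h \le n-h$, so the target equality is $\gamma'(M(K_{h,n-h})) = h$. By the middle-graph construction, the edges of $M(K_{h,n-h})$ split into three disjoint families: the $A$-incidences $a_i\,e_{ij}$, the $B$-incidences $b_j\,e_{ij}$, and the line-graph edges $e_{ij}\,e_{i'j'}$ with $i=i'$ or $j=j'$. My plan is to establish the two matching inequalities separately, working directly inside $M(K_{h,n-h})$, and I expect the lower bound to be the easier half.

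For the lower bound $\gamma'(M(K_{h,n-h})) \ge h$, I would exhibit an induced matching of $h$ edges in $M(K_{h,n-h})$, i.e.\ $h$ pairwise non-adjacent edges such that no single edge of $M(K_{h,n-h})$ is adjacent to two of them, forcing an edge dominating set to have a distinct dominator for each. My candidate is the family $\{b_j\,e_{jj} : 1 \le j \le h\}$. For $j \ne j'$ a short check confirms the four endpoints $b_j,b_{j'},e_{jj},e_{j'j'}$ induce no other edge of $M(K_{h,n-h})$: the original vertices form an independent set (so $b_jb_{j'}\notin E$), $b_j$ is not incident to $a_{j'}b_{j'}$ in $K_{h,n-h}$ (so $b_je_{j'j'}\notin E$), and the edges $a_jb_j, a_{j'}b_{j'}$ share no endpoint in $K_{h,n-h}$ (so $e_{jj}e_{j'j'}\notin E$). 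A fortiori no other edge of $M(K_{h,n-h})$ can meet both $\{b_j,e_{jj}\}$ and $\{b_{j'},e_{j'j'}\}$, giving the bound.

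For the upper bound $\gamma'(M(K_{h,n-h})) \le h$, my plan is to transport the classical edge dominating set $M_0=\{a_1b_1,\ldots,a_hb_h\}$ of $K_{h,n-h}$ (which realises $\gamma'(K_{h,n-h})=h$) into $M(K_{h,n-h})$ through the identification $a_ib_j \leftrightarrow e_{ij}$. The most natural candidate is $F=\{a_i\,e_{ii} : 1\le i\le h\}$; verification would proceed family by family: every $A$-incidence $a_i e_{ij}$ meets $F$ at $a_i$; every line-graph edge $e_{ij}e_{i'j'}$ with $i=i'$ (respectively $j=j'$) meets $a_i e_{ii}$ (respectively $a_j e_{jj}$) through the shared index by way of the line-graph edge $e_{ij}e_{ii}$; the delicate case is the $B$-incidences $b_j e_{ij}$, which I would dominate by the two-step chain $b_j e_{ij} \sim e_{ij}e_{ii} \sim a_i e_{ii}\in F$ lying inside $M(K_{h,n-h})$ whenever the intermediate edge is present. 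If this direct check fails for some $(i,j)$, the fallback is to replace the chosen matching by a mixture of $A$-incidence and line-graph edges $e_{ii}e_{i\,j^*}$ with well-spread $j^*$, still of size $h$ by the rigidity of $M_0$.

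The main obstacle will be this upper-bound verification: $M(K_{h,n-h})$ has $h(n-h)$ edge-vertices of comparatively large degree, and the whole bipartite side $B$ is covered only at second order through the line-graph layer, so case analysis across the three edge families is forced. If the explicit construction does not cleanly cover every $B$-incidence, a more robust route is a double counting argument that bounds the number of edges a single dominator can cover against the total number of edges in $M(K_{h,n-h})$, combined with an LP-style adjustment across the bipartition to meet the target $h$.
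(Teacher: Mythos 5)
The paper offers no proof of this statement: it is quoted from \cite{J87}, and the ``$M$'' in it is almost certainly a transcription error. Jayaram's theorem --- and the quantity the paper actually needs in the surrounding arguments, where it is combined with $\gamma_p(M(G))\le\gamma'(G)$ --- is $\gamma'(K_{h,n-h})=\min\{h,n-h\}$, the edge domination number of $K_{h,n-h}$ itself, not of its middle graph. Taken literally, the statement you set out to prove is false. For $K_{2,3}$ the target would be $2$, but every edge $b_je_{ij}$ of $M(K_{2,3})$ must have an endpoint in $V(F)$; since the original vertices are independent in the middle graph, each edge of $M(K_{2,3})$ contains at most one vertex of $B$, so two edges of $F$ leave some $b_{j_0}\notin V(F)$, which forces $e_{1j_0},e_{2j_0}\in V(F)$, and a short check of the two remaining endpoints shows some $b_je_{ij}$ is always left undominated; hence $\gamma'(M(K_{2,3}))\ge 3$. (Even $M(K_{1,3})$ already needs two edges, since $\{b_1,e_{11}\}$, $\{b_2,e_{12}\}$, $\{b_3,e_{13}\}$ are disjoint and pairwise non-adjacent except through the $e$'s.)

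Your lower bound is sound: $\{b_je_{jj}\}_{j=1}^h$ is indeed an induced matching of $M(K_{h,n-h})$, and no single edge can dominate two members of an induced matching, so $\gamma'(M(K_{h,n-h}))\ge h$. The fatal gap is in the upper bound, exactly where you flagged discomfort. An edge dominating set must meet every edge \emph{directly}: by the paper's own definition, each edge must be in $F$ or adjacent to (i.e., share an endpoint with) a member of $F$. Your ``two-step chain'' $b_je_{ij}\sim e_{ij}e_{ii}\sim a_ie_{ii}$ is a distance-two connection and does not witness domination. Concretely, for $F=\{a_ie_{ii}\}_{i=1}^h$ in $M(K_{2,3})$ the edge $b_2e_{12}$ has neither endpoint in $V(F)=\{a_1,a_2,e_{11},e_{22}\}$, and the line-graph edges $e_{ij}e_{ij'}$ with $j,j'\ne i$ escape $F$ as well. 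No fallback (double counting, LP adjustment) can close this, because the claimed value is simply not attained by the middle graph. What is provable in one line --- and is what the paper uses downstream --- is the statement for $K_{h,n-h}$ itself: with $h\le n-h$, the matching $\{a_ib_i\}_{i=1}^h$ dominates every edge through its $A$-endpoint, while any $F$ with $|F|<h$ misses some $a_i$ and some $b_j$ (each edge contributes at most one vertex to each side), leaving $a_ib_j$ undominated.
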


\begin{theorem}
$ \gamma_p(M(K_{h,n-h})) = min \{ h, n-h \} $.
\end{theorem}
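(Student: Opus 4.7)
The plan is a two-sided bound.

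For the upper bound, Theorem \ref{bbbb} gives $\gamma_p(M(G)) \le \gamma'(G)$ for any graph $G$, and the preceding theorem (cited from \cite{J87}) gives $\gamma'(K_{h,n-h}) = \min\{h,n-h\}$. Chaining these yields $\gamma_p(M(K_{h,n-h})) \le \min\{h,n-h\}$.

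For the matching lower bound, assume without loss of generality that $h \le n-h$ and, for contradiction, that $S$ is a PD-set of $M(K_{h,n-h})$ with $|S| \le h-1$. Let $A = \{v_1,\ldots,v_h\}$ and $B = \{v_{h+1},\ldots,v_n\}$ be the bipartition classes of $K_{h,n-h}$. Call a row-index $i \in A$ \emph{touched by $S$} if $v_i \in S$ or some edge $u_{ij} \in S$, and analogously for column-indices $j \in B$. Each element of $S$ touches at most one row and at most one column, so the numbers of touched rows and of touched columns are each at most $|S| \le h-1$, strictly less than $h$ and, since $h \le n-h$, strictly less than $n-h$. Fix therefore an untouched $i^* \in A$ and an untouched $j^* \in B$, and set $X = \{v_{i^*},\, u_{i^*j^*},\, v_{j^*}\}$.

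Two claims finish the argument. First, $X \cap N[S] = \emptyset$, which is immediate: by the untouched conditions, none of the three vertices in $X$ lies in $S$ or is adjacent in $M(K_{h,n-h})$ to any element of $S$. Second, for every $w \in V(M(K_{h,n-h})) \setminus X$ one has $|N(w) \cap X| \in \{0,2\}$. The subcases are: (a) a vertex $v_i$ with $i \in A \setminus \{i^*\}$ or $v_j$ with $j \in B \setminus \{j^*\}$ has no neighbor in $X$, since its neighbors in $M(G)$ are only the edge-vertices incident to it in $G$, while the unique edge-vertex of $X$ is $u_{i^*j^*}$ with endpoints $v_{i^*}, v_{j^*}$; (b) an edge-vertex $u_{ij}$ with $i \neq i^*$ and $j \neq j^*$ has no neighbor in $X$ either; (c) an edge-vertex $u_{i^*j}$ with $j \neq j^*$ has exactly the two neighbors $v_{i^*}$ (by incidence) and $u_{i^*j^*}$ (by sharing the endpoint $v_{i^*}$ in $G$); and symmetrically (d) $u_{ij^*}$ with $i \neq i^*$ has exactly $v_{j^*}$ and $u_{i^*j^*}$.

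Combining the two claims, consider the first vertex $y \in X$ that becomes colored during the propagation; just before that step the entire $X$ is still white, so the forcer $w$ of $y$ lies outside $X$. But by the second claim $w$ has either no neighbor in $X$ (so cannot force anything in $X$) or two neighbors in $X$ (both still white, so $w$ does not have a unique white neighbor and cannot force any single vertex of $X$). Either way a contradiction, proving $|S| \ge h$. The main obstacle is the case analysis (c)/(d): it is precisely the line-graph adjacencies of the middle construction (edges sharing an endpoint in $G$) that supply the second neighbor of $X$ at every edge-vertex sitting in row $i^*$ or column $j^*$, and this is exactly what blocks the propagation from entering $X$.
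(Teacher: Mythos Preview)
Your proof is correct. The upper bound via Theorem~\ref{bbbb} and the cited value of $\gamma'(K_{h,n-h})$ matches the paper's route in spirit (the paper exhibits the smaller part $X$ explicitly as a PD-set, which is the same content).

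The lower bound, however, is obtained by a genuinely different argument. The paper observes that the edge-vertices of $M(K_{h,n-h})$ induce the Cartesian product $K_h\,\square\,K_{n-h}$, invokes the known value $\gamma_p(K_h\,\square\,K_{n-h})=\min\{h,n-h\}-1$ from \cite{soko14} to get $\gamma_p(M(K_{h,n-h}))\ge \min\{h,n-h\}-1$, and then argues separately that specific candidate sets of size $h-1$ fail. Your argument avoids the external product result entirely: from any set $S$ of size at most $h-1$ you locate an untouched row $i^*$ and column $j^*$ and exhibit the three-vertex ``blocking'' set $X=\{v_{i^*},u_{i^*j^*},v_{j^*}\}$ that is disjoint from $N[S]$ and has the parity property $|N(w)\cap X|\in\{0,2\}$ for every $w\notin X$, so no force can ever enter $X$. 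This is more elementary, fully self-contained, and handles all sets $S$ of size $\le h-1$ uniformly rather than checking particular candidates; the price is the small case analysis (a)--(d), which you carry out cleanly. Both approaches reach the same conclusion, but yours does not depend on \cite{soko14}.
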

\begin{proof}
Let $ G= K_{h,n-h} $ and $ V(G)=X \cup Y $ such that $ \mid X \mid \leq \mid Y \mid $. If $ S=X $, then $ S $ is a $ PD $-set for $ M(G) $. So $ \gamma_p(M(G)) \leq \mid X \mid $. We Know $ E(G)=\{ u_{ij} \mid v_i ~ and~ v_j~ are~ adjacent \} $, in $ M(G) $, induce $ K_h \Box K_{n-h} $(cartesian product of $ K_h $ and $ K_{n-h} $) and in \cite{soko14} proved that $ \gamma_p(K_h \Box K_{n-h}) =  min \{ h, n-h \} -1 $. Given the position of $ K_h \Box K_{n-h} $ in $ M(K_{h,n-h}) $, it is clear that $ \gamma_p(M(K_{h,n-h})) \geq min \{ h, n-h \} -1$. Without losing generality, let $ min \{ h, n-h \}=h  $. So if $ \gamma_p $-set of $ M(G) $ be $ h-1 $ vertices of $ X $ or $ h-1 $ vertices of $ Y $, then it is clear that propagation can not occur. Also if $ V(K_h)= \{ v_{i_1}, \cdots, v_{i_h} \} $, then $ \{ v_1, \cdots, v_{h-1} \} $ is a $ \gamma_p $-set for $ K_h \Box K_{n-h} $ but this set can not be a $ \gamma_p $-set for $ M(G) $ because propagation can not continue. So $ \gamma_p(M(K_{h,n-h})) \neq h-1 $ and therefore $ \gamma_p(M(K_{h,n-h})) = min \{ h, n-h \} $.
\end{proof}

At the end of this section, we present the following conjecture as an open problem.

\begin{conj}
For any graph $ G $, $ \gamma_p(M(G)) = \gamma^{'}(G). $ 
\end{conj}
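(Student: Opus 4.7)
Since Theorem~\ref{bbbb} already supplies the upper bound $\gamma_p(M(G)) \leq \gamma'(G)$, my plan focuses entirely on the reverse inequality $\gamma'(G) \leq \gamma_p(M(G))$. The idea is to convert any minimum PD-set $S$ of $M(G)$ into an edge set $F \subseteq E(G)$ with $|F|\leq |S|$ that is edge-dominating. Define $F=\phi(S)$ where $\phi(u_{ij})=v_iv_j$ when $u_{ij}$ is an edge-vertex of $M(G)$, and $\phi(v_i)$ is a fixed edge of $G$ incident with $v_i$ (isolated vertices of $G$ contribute nothing to $M(G)$'s monitoring and may be discarded). Then $|F|\leq |S|$, and the remaining task is to prove that $F$ is an edge dominating set of $G$.

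I would prove by induction on the propagation step $t$ the following statement: every edge-vertex $u_{ab}$ of $M(G)$ that is monitored by step $t$ corresponds to an edge $v_av_b$ of $G$ dominated by $F$. Since $S$ is a PD-set, every edge-vertex is eventually monitored, and the statement then yields the conclusion. The base case $t=0$ is a direct inspection of $N_{M(G)}[S]$: whatever neighbor of $u_{ab}$ lies in $S$ (namely $u_{ab}$ itself, $v_a$, $v_b$, or some incident edge-vertex $u_{ac}$ or $u_{bc}$), the construction of $F$ places an edge sharing a vertex with $v_av_b$ into $F$, so $v_av_b$ is dominated.

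The main difficulty is the inductive step for forcing moves of the form $v_a \to u_{ab}$. Here the induction supplies edge-domination of all edges $v_av_j$ with $j\in N_G(v_a)\setminus\{b\}$, but this does not automatically yield edge-domination of $v_av_b$ itself unless some edge of $F$ actually touches $v_a$. The plan is to trace the chain backwards: if no edge of $F$ is incident with $v_a$, then $v_a\notin S$ and no $u_{aj}\in S$, so $v_a$ must itself have been propagated from some edge-vertex $u_{ac}$, and one argues that this earlier chain forces an edge of $F$ incident either with $v_a$ or with $v_b$. The symmetric analysis handles forcing moves $u_{ac}\to u_{ab}$.

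I expect this backward-tracing step to be the main obstacle, since $\phi$ was chosen on vertex-vertices of $S$ in an arbitrary way and need not be compatible with the specific propagation history that witnesses $S$'s power-domination property. Two natural remedies are: \textbf{(i)} redefine $\phi$ adaptively, choosing $\phi(v_i)$ only after inspecting a chronological list of forces for $S$, so that each such edge is aligned with the first forcing chain emanating from $v_i$; or \textbf{(ii)} introduce a potential function counting the number of edges of $G$ not yet dominated by $F$ whose corresponding $M(G)$-vertex is monitored, and show that every forcing step decreases this potential until it vanishes. If both routes resist, the fallback is to search for a counterexample among regular graphs, among the central- and middle-graph constructions already studied in this paper, or among graphs with many twin edges, since the equality has only been verified for the limited families in Table~\ref{lio}.
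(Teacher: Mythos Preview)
The paper does not prove this statement: it is explicitly presented as a \emph{conjecture} and labeled an open problem in the final sentence of Section~6. There is therefore no ``paper's own proof'' to compare your proposal against.

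As for the proposal itself, you correctly isolate the real obstruction. Your inductive invariant (``every monitored edge-vertex corresponds to an $F$-dominated edge'') is too weak to propagate: knowing that $v_av_b$ is $F$-dominated only guarantees an edge of $F$ through $v_a$ \emph{or} through $v_b$, so when $u_{ab}$ forces $u_{ac}$ you cannot conclude that $v_av_c$ is $F$-dominated unless the witnessing edge happens to lie at $v_a$. Your remedy~(i), choosing $\phi(v_i)$ adaptively from a fixed chronological list of forces, does not obviously close this gap either, because long forcing chains inside the edge-vertex part of $M(G)$ can move domination witnesses away from the endpoint you need. Remedy~(ii) is stated only as a heuristic. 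In short, what you have written is a reasonable research plan with an honestly flagged gap, not a proof; and since the paper itself offers no argument, neither establishes the equality.
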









\textbf{Acknowledgments.} The authors would like to thank the referee for his/her careful reading and valuable comments which improved the quality of the manuscript.\\



\end{document}